\tikzstyle{V}=[fill=black,circle,scale=0.4, outer sep = 4pt]
\numberwithin{equation}{section}
\newtheorem{thm}{Theorem}[section]
\newtheorem{prop}[thm]{Proposition}
\newtheorem{cor}[thm]{Corollary}
\newtheorem{lemma}[thm]{Lemma}
\theoremstyle{remark}
\newtheorem{rmk}[thm]{Remark}
\theoremstyle{definition}
\newtheorem{defn}[thm]{Definition}
\newcommand{\bi}{\begin{itemize}}
\newcommand{\ei}{\end{itemize}}
\newcommand{\be}{\begin{enumerate}}
\newcommand{\ee}{\end{enumerate}}
\newcommand{\C}{\mathbb{C}}
\newcommand{\T}{\mathbb{T}}
\newcommand{\G}{\mathcal{G}}
\newcommand{\R}{\mathbb{R}}
\newcommand{\N}{\mathbb{N}}
\newcommand{\Z}{\mathbb{Z}}
\newcommand{\Per}{\operatorname{Per}}
\date{18 July 2018}
\begin{document}

\title[Generalized gauge actions and Hausdorff structure]{Generalized gauge actions on $k$-graph $C^*$-algebras: KMS states and Hausdorff structure}
\author[Farsi, Gillaspy, Larsen and Packer]{Carla Farsi, Elizabeth Gillaspy, Nadia S. Larsen and Judith A. Packer}

\maketitle

\begin{abstract}
For a finite, strongly connected $k$-graph $\Lambda$, an Huef, Laca, Raeburn and Sims studied  the KMS states associated to the preferred dynamics of the $k$-graph $C^*$-algebra $C^*(\Lambda)$.  They found that these KMS states are determined
by the periodicity of $\Lambda$ and a certain Borel probability measure $M$ on the infinite path space $\Lambda^\infty$ of $\Lambda$. Here we consider different dynamics on $C^*(\Lambda)$, which arise from a functor $y: \Lambda \to \R_+$ and were first proposed by McNamara in his thesis.  
We show that the KMS states associated to McNamara's dynamics are again parametrized by the periodicity group of $\Lambda$ and a family of Borel probability measures on the infinite path space.  Indeed, these measures also arise as Hausdorff measures on $\Lambda^\infty$, and the associated Hausdorff dimension is intimately linked to the inverse temperatures at which KMS states exist.  Our construction of the metrics underlying the Hausdorff structure uses  the functors  $y: \Lambda \to \R_+$;
 the stationary $k$-Bratteli diagram associated to $\Lambda$; and the concept of exponentially self-similar weights on Bratteli diagrams.
\end{abstract}

\section{Introduction}
 KMS states have their origin in equilibrium statistical mechanics and have long been a very fruitful tool in the study of operator algebras. In this paper, we identify links between KMS states on $C^*$-algebras of higher rank graphs, and the Hausdorff measure and Hausdorff dimension associated to  ultrametrics on   Bratteli diagrams that exhibit a certain self-similarity.

Given a $C^*$-algebra $A$ with a one-parameter group  of automorphisms $(\gamma_t)_{t\in \mathbb{R}}$, a state $\phi$ on $A$ satisfies the Kubo-Martin-Schwinger (KMS) condition at inverse temperature $\beta \in \mathbb{R}$ if $\phi(ab)=\phi(b\gamma_{i\beta}(a))$ for all analytic elements $a, b \in A$, where an element $x\in A$ is analytic if  the function $t\mapsto \gamma_t(x)$ extends to an entire function. Some fundamental examples of dynamical systems $(A,\gamma)$ with a unique KMS state at a distinguished inverse temperature $\beta$ are the  Cuntz algebras $\mathcal{O}_n$  with the gauge action, for $n\geq 1$, where $\beta=\log n$, see~\cite{olesen-pedersen}, and (when $A$ is an irreducible matrix) the Cuntz-Krieger  algebras $\mathcal{O}_A$ with the gauge action, where $\beta=\log \rho(A)$ with $\rho(A)$ the spectral radius of $A$, see ~\cite{enomoto-fujii-watatani}.  
This last result was generalized by Exel in \cite{exel}, and Exel and Laca in \cite{exel-laca-partial}, where they considered generalized gauge actions $\gamma$ on Cuntz-Krieger algebras.  Again, if $A$ is an irreducible matrix, the dynamical system $(\mathcal O_A, \gamma)$ admits a unique KMS state.  Links between these unique KMS states and harmonic measures were established by Okayasu in \cite{okayasu-first, okayasu}.
From a quite different perspective, it is natural to ask what are the possible inverse temperatures of dynamical systems, and in this respect Bratteli-Elliott-Herman~\cite{bratteli-elliott-herman} constructed simple $C^*$-algebras $A$ and associated dynamics $\gamma$  which attain any closed subset of $\mathbb{R}$ as a possible range of inverse temperatures for KMS$_\beta$ states.
  
Recently there has been great interest in KMS states for gauge actions on higher-rank graph $C^*$-algebras,  which are  a generalization of Cuntz-Krieger algebras: see for example~\cite{yang-endom, yang-von-Neumann, yang-factoriality, aHLRS-coord-irred, aHLRS, LLNSW, christensen}. The  first main aim of the present paper is the analysis, for the $C^*$-algebras of finite, strongly connected higher-rank graphs, of the KMS states associated to the generalized gauge actions on higher-rank graph $C^*$-algebras which were first introduced by McNamara in his thesis \cite{McNamara}.  Proposition \ref{prop:KMS-s-states} identifies the  inverse temperatures $\beta$ such that these actions admit KMS$_\beta$ states, while 
Theorem \ref{thm:KMS-simplex}  describes  the  KMS states.

Towards explaining this aim in more detail, let us first recall that Kumjian and Pask introduced higher-rank graphs (or $k$-graphs) and their $C^*$-algebras in \cite{kp} as a simultaneous generalization of the higher-rank Cuntz-Krieger algebras of Robertson and Steger \cite{robertson-steger} and the $C^*$-algebras of directed graphs.   In addition to their graph-theoretical description, the $C^*$-algebras associated to higher-rank graphs also admit a groupoid description as well as a universal presentation in terms of generators and relations.  This flexibility has led to applications of $k$-graph $C^*$-algebras in a variety of contexts (such as the question of nuclear dimension for Kirchberg algebras \cite{ruiz-sims-sorensen} and  $K$-theory computations for  quantum spheres \cite{hajac-sims}) and has also facilitated the analysis of structural properties of $k$-graph $C^*$-algebras.  For example, the ideal structure of $k$-graph $C^*$-algebras  $C^*(\Lambda)$ \cite{rsy1, robertson-sims, CKSS} is completely determined by the underlying higher-rank graph $\Lambda$, whereas the groupoid perspective enabled the characterization of Cartan subalgebras of $C^*(\Lambda)$  \cite{bnrsw}.

In the groupoid perspective, as explained by Renault already in  \cite{renault},  time evolutions (dynamics) on the $C^*$-algebra  of a groupoid $\mathcal{G}$ are implemented by continuous cocycles on $\mathcal{G}$,
  and the task of understanding the KMS states on $C^*(\mathcal{G})$ requires, at a minimum, identifying the quasi-invariant measures on the unit space of $\mathcal{G}$. There are now refinements of Renault's result, see for example \cite{neshveyev, thomsen, christensen}.  In particular, Christensen's recent preprint \cite{christensen} combines quasi-invariant measures with a certain group of symmetries to describe    KMS states on groupoid $C^*$-algebras.  This perspective is particularly well suited to our case of interest, namely, the KMS states associated to  generalized gauge actions on $k$-graph $C^*$-algebras.

Much of the structural analysis of $k$-graph $C^*$-algebras $C^*(\Lambda)$ is facilitated by the {\em gauge action}, a natural action of $\T^k$ on $C^*(\Lambda)$; the existing literature on KMS states for $k$-graph $C^*$-algebras is no exception.  Restricting the gauge action to a subgroup $R \cong \R$ of $\T^k$ gives rise to a dynamics, that is, a  one-parameter action $\alpha$ of the real line on $C^*(\Lambda)$. The KMS states and the possible range of inverse temperatures for the dynamical system  $(C^*(\Lambda), \alpha)$  carry interesting information about the underlying $k$-graph $\Lambda$, cf.~\cite{yang-von-Neumann,aHLRS-coord-irred, aHLRS,LLNSW,FGKP,yang-factoriality}. In particular, the analysis of \cite{aHLRS} links the KMS states associated to the gauge action with the simplicity of $C^*(\Lambda)$: for finite, strongly connected $k$-graphs $\Lambda$, simplicity of $C^*(\Lambda)$ is equivalent to the existence of a unique KMS$_\beta$ state at the inverse temperature $\beta=1$, and moreover to triviality of the  periodicity group $\Per\Lambda$ (which is defined in terms of shift invariant infinite paths in the $k$-graph; see Definition \ref{def:periodicity}).

For the method developed in \cite{aHLRS} to classify KMS states associated to dynamics of the form $\alpha$ described in the previous paragraph, a key ingredient is the construction of a certain Borel probability measure $M$ on the infinite path space $\Lambda^\infty$ of $\Lambda$. This measure $M$ is also intrinsically linked to both the fractal geometry and the noncommutative geometry of $\Lambda^\infty$ \cite{pearson-bellissard, FGJKP2}. In a different but related development, Ionescu and Kumjian established connections between KMS states and Hausdorff structure for certain Renault-Deaconu groupoid $C^*$-algebras in \cite{ionescu-kumjian-hausdorff-KMS}.  Their results apply in particular to the $C^*$-algebras associated to directed graphs (which are $k$-graphs for $k=1$) equipped with a generalized gauge dynamics.

 In this paper, we extend and sharpen these results for finite, strongly connected $k$-graphs, that is, $k$-graphs $\Lambda$ such that there are finitely many vertices in $\Lambda$, and the set $v\Lambda w$ of paths with source $w$ and range $v$ is finite and nonempty for each pair $(v, w)$ of vertices. We first analyze the KMS states on $C^*(\Lambda)$ for the generalized gauge dynamics $\alpha^{y,\theta}$ introduced by McNamara in \cite{McNamara}, where $\theta$ is a positive real number and $y$ is an {\em $\R_+$-functor}, or {\em weight functor} as  defined by McNamara in \cite{McNamara}.  Very simple examples involving a $2$-graph with a single vertex (cf.~Section \ref{sec:McNamara} below) show that there are multitudes of choices of $\R_+$-functors, and it is this flexibility we want to explore more closely.

McNamara's thesis \cite{McNamara}   characterizes the KMS states of $(C^*(\Lambda), \alpha^{y,\theta})$ under the additional hypothesis that each of the coordinate matrices $\{A_i\}_{i=1}^k$ of $\Lambda$ (defined in Equation \eqref{eq:coord-mcs}) is irreducible. This hypothesis implies, but is strictly stronger than, our standing hypothesis that $\Lambda$ be finite and strongly connected; see \cite[Lemma 4.1 and Example 4.3]{aHLRS}.  
The actions $\alpha^{y,\theta}$ are constructed using a new
family of matrices
$\{B_i(y,\theta)\}_{i=1}^k$
 that takes into account the coordinate matrices $A_1, \ldots, A_k$ as well as $y$ and $\theta$.  Lemma \ref{lem:B irred family} follows \cite[Proposition 3.1]{aHLRS} to establish that when $\Lambda$ is finite and strongly connected, the matrices $ B_i(y,\theta)$ admit a common positive eigenvector which is unique up to scaling; we call it the {\em Perron-Frobenius eigenvector} of the family $\{ B_i(y,\theta)\}_{i=1}^k$.
We prove in Theorem~\ref{thm:spec-rad-cts} that for a fixed  $y$, the spectral radii and the Perron-Frobenius eigenvector of the  family $B_1(y,\theta),\dots ,B_k(y,\theta)$ vary smoothly with $\theta$. This is a new sort of insight that would not have been available via considering only the gauge dynamics and its variations, and we  use it to prove that in some cases the unique inverse temperature $\beta>0$ at which $(C^*(\Lambda),\alpha^{y,\theta})$ admits KMS$_\beta$ states is precisely $\theta$, see Propositions~\ref{prop:one-vertex-unique-beta} and \ref{prop-partial-result-equality-thetas}.

 As hinted above, our analysis of the KMS states of $(C^*(\Lambda), \alpha^{y,\theta})$  comes in two steps: first we describe the unique quasi-invariant measure $\mu_{y,\theta}$ associated to $\alpha^{y,\theta}$ (Proposition \ref{prop:unique-measure} and Remark \ref{rmk:quasi-invar}), and then Theorem \ref{thm:KMS-simplex} shows that Christensen's group of symmetries for this dynamics is precisely $\Per\,\Lambda$.  We also show (see Proposition \ref{prop:KMS-s-states}) that the dynamical system $(C^*(\Lambda), \alpha^{y,\theta})$ admits KMS$_\beta$ states iff $\alpha^{y,\beta} = \alpha^{y,\theta}$.

One immediate consequence of  Theorem~\ref{thm:KMS-simplex} is a new proof of the  structural result from \cite{aHLRS} that identifies existence of a unique KMS$_\beta$ state with simplicity of $C^*(\Lambda)$ and aperiodicity of $\Lambda$. Moreover, if $\Lambda$ has irreducible coordinate matrices, we show that a certain technical condition (which arose already in  \cite{McNamara}) gives rise to a criterion for the aperiodicity of $\Lambda$ that may prove versatile in applications; see Corollary~\ref{cor:aperiodicity}.

Given the structural similarity between the KMS states associated to $\alpha^{y,\theta}$, and those which are tied to the actions $\alpha$ already studied by \cite{aHLRS}, we pause to reassure the reader that our added generality does indeed give rise to new examples of actions and measures.  To that end, in Section \ref{sec:examples} we study two higher-rank graphs which, while simple to draw, admit $\R_+$-functors $y$ leading to a diverse family of measures $\mu_{y,\theta}$ and actions $\alpha^{y,\theta}$. Their periodicity groups are also described, leading to a complete picture of the associated KMS states.
 
 Inspired by Ionescu and Kumjian \cite{ionescu-kumjian-hausdorff-KMS}, we then proceed to our second main aim of the paper; namely, we relate these KMS states to Hausdorff structures on $\Lambda^\infty$.  The same data of an $\R_+$-functor $y$ and a positive real number $\theta$ which gives rise to the generalized gauge actions $\alpha^{y,\theta}$ also leads to an ultrametric $d_{y,\theta}$ on  $\Lambda^\infty$ (Proposition \ref{prop:ultrametric}). 
  The construction of the ultrametric $d_{y,\theta}$ uses the stationary $k$-Bratteli diagram associated to $\Lambda$, which was introduced in \cite{FGJKP2},  as well as a new concept,  that of an  exponentially self-similar weight on a Bratteli diagram (Definition \ref{def-Cantorian-Brat-diagr}). 
  
   The final main result of the paper is Corollary \ref{cor:Hausdorff-dim}, which proves  that the Hausdorff dimension of $(\Lambda^\infty, d_{y,\theta})$ is $\theta$ and the associated Hausdorff measure is $\mu_{y,\theta}$.
   In fact, we establish a result about Hausdorff dimension in a greater generality involving weights on Bratteli diagrams with a certain self-similarity property; see Theorem \ref{thm:Hausdorff-dim-self-sim}.

\subsection*{Acknowledgments}
C.F.~and J.P.~were  partially supported by two individual  grants from the Simons Foundation (C.F.~\#523991; J.P.~\#316981).
E.G.~was partially supported by   the Deutsches Forschungsgemeinschaft via the SFB 878 ``Groups, Geometry, and Actions'' of the Universit\"at M\"unster.

The authors heartily thank Sooran Kang for a number of insightful conversations throughout the course of this work, and in particular for bringing McNamara's thesis \cite{McNamara} to our attention.
We also thank A.~Sims for encouraging us, after the second author's talk at the workshop ``Applications of operator algebras: order, disorder and symmetry" at ICMS Edinburgh during June 2017, to consider general finite, strongly connected $k$-graphs in our investigation.
Finally, thanks to J.~Christensen for alerting us to his recent preprint \cite{christensen}, which was instrumental in proving Theorem \ref{thm:KMS-simplex}.

The research presented here was completed in large part during visits of E.G.~and C.F.~to Oslo and of N.L.~to Boulder; we thank the host universities and their Operator Algebras research groups for their hospitality.

\section{Preliminaries on higher-rank graphs}\label{sec:prelim}

We begin by fixing the notation which will be used throughout the paper. For a matrix $A$, we write $\rho(A)$ for its spectral radius. By $\N$ we denote the monoid of natural numbers $\{0,1,2\dots\}$ under addition. For $k\in \N$, $k\geq 1$, we let $\N^k$ be the monoid of $k$-tuples of natural numbers under addition, with standard basis vectors $e_i$ for $i=1,\dots,k$.
 If $\gamma \in \R^k$, we will write $\gamma_i \in \R$ to denote the $i$th component of $\gamma$.  In other words, we have $\gamma = \sum_{i=1}^k \gamma_i e_i$.   If $\gamma$ is a vector in $\R^k$ and $m \in \N^k$, we write $\gamma^m$ for the product $\prod_{i=1}^k \gamma_i^{m_i}$.

 As a category, $\N^k$ has one object (namely 0), and composition of morphisms $n, m \in \N^k$ is given by addition.  In keeping with the use of $n\in \N^k$ to denote a morphism in the category $\N^k$, for any category $\Lambda$, we write
 \[ \lambda \in \Lambda \Leftrightarrow \lambda \in \text{Mor}(\Lambda).\]

We recall the basic facts of the construction, due to Kumjian and Pask, of $k$-graphs and their $C^*$-algebras \cite{kp}. Let $k\in\N$, $k\geq 1$. A $k$-graph $\Lambda$ consists of a countable small category and a \emph{degree functor} $d:\Lambda\to \N^k$, i.e.~a map such that $d(\eta\nu)=d(\eta)+d(\nu)$ for all $\eta,\nu\in \Lambda$, satisfying the following factorization property: whenever $d(\lambda)=m+n$ for $\lambda\in \Lambda$ and some $m,n\in \N^k$, there are unique $\eta,\nu\in \Lambda$ such that $\lambda=\eta\nu$, $d(\eta)=m$ and $d(\nu)=n$.

We denote by $\Lambda^n$  the set of morphisms (or paths) of degree $n\in\N^k$. Thus $\Lambda^0$ is the set of
identity morphisms; these are referred to as the vertices of $\Lambda$.
We also identify $\Lambda^0$ with the set of objects of $\Lambda$, and so the codomain
and domain maps become functions $r, s : \Lambda \to \Lambda^0$. In this paper, we
consider only finite $k$-graphs, meaning that $|\Lambda^n| < \infty$ for all $n\in \N^k$.
For $v, w \in \Lambda^0$ and $n \in \N^k$, denote by $v\Lambda^n w$ the set of $\lambda\in \Lambda^n$ such that  $s(\lambda)=w$ and $r(\lambda)=v$.

It is often helpful to think of $\Lambda^{e_i}$ as the ``edges of color $i$'' in $\Lambda$.  With this perspective, the factorization property implies that (for $i \not= j$) any morphism $\lambda \in \Lambda^{e_i + e_j}$ can be represented either as a color-$i$ edge followed by a color-$j$ edge, or a color-$j$ edge followed by a color-$i$ edge.  In other words, (cf.~\cite{hazle-raeburn-sims-webster}) a $k$-graph can be described by a directed graph with $k$ colors of edges, together with a pairing which identifies each color-$i$--color-$j$ path with a unique color-$j$--color-$i$ path.

The infinite path space $\Lambda^\infty$ of $\Lambda$ formally consists of degree-preserving functors from the $k$-graph $\Omega_k$  into $\Lambda$.  The category $\Omega_k$ has  object set $\N^k$ and morphism set
\[ \text{Mor}(\Omega_k) = \{(m,n) \in \N^k \times \N^k \mid m \le n\};\]
 its structure maps are given by $r(m,n) = m$, $s(m,n) = n$, and the composition rule is $(m,n)(n,p) = (m,p)$.  To view $\Omega_k$ as a $k$-graph, we equip it with the degree functor $d: \Omega_k \to \N^k$ given by
 $d(m,n) = n-m.$
The infinite path space  comes equipped with a family of {\em shift maps} $\{\sigma^j\mid j \in \N^k\}$, given by
\[ \sigma^j(x)(m,n) := x(m+j, n+j).\]

Observe that, since $\Omega_k$ has a terminal object (namely $0 \in \N^k$) but no initial object, our infinite paths will have a range but no source.  Moreover, the fact that $\Omega_k$ contains infinitely many morphisms of each degree implies that the same is true about every infinite path.  In particular, using the ``edge-colored directed graph'' perspective on $k$-graphs, every infinite path $x\in \Lambda^\infty$ contains infinitely many edges of each color.

 We  view $\Lambda^\infty$ as being topologized by the cylinder sets $\{Z(\lambda)\}_{\lambda \in \Lambda}$, where
\[ Z(\lambda) = \{ x \in \Lambda^\infty: x(0, d(\lambda)) = \lambda\}\]
is the collection of infinite paths with initial segment $\lambda$.
The topology on $\Lambda^\infty$ whose basic open sets are $\{ Z(\lambda)\}_{\lambda \in \Lambda}$ is locally compact and Hausdorff;  indeed, each cylinder set $Z(\lambda)$ is compact (as well as open) in the cylinder set topology.  If $\Lambda$ is finite then $\Lambda^\infty$ is compact.

The \emph{coordinate matrices} $A_1, \dots, A_k\in\operatorname{Mat}_{\Lambda^0}(\N)$ of $\Lambda$
are given by
\begin{equation}
\label{eq:coord-mcs}
A_i(v,w) = |v\Lambda^{e_i}w|
\end{equation}
for $v,w$ in $\Lambda^0$. By the factorization property, the matrices $A_i$ pairwise commute. For $n \in
\N^k$, we define
\begin{equation}\label{pairwisecomm}\textstyle
A^n := \prod^k_{i=1} A_i^{n_i}.
\end{equation}
Note that $A^n(v,w)=|v\Lambda^nw|$ for all $v,w$. The family $\{A_1,\dots ,A_k\}$ is an \emph{irreducible family of
matrices}, cf. \cite[Section 3]{aHLRS}, if each $A_i$ is nonzero and there is a finite set $F\subset \N^k$ such that the matrix
\[
A_F:=\sum_{n\in F} A^n
\] is positive; explicitly, for every pair $(v,w)\in \Lambda^0 \times \Lambda^0$ we have $A_F(v,w)>0$.

A  $k$-graph $\Lambda$ is said to be \emph{strongly connected} if $v\Lambda w \not= \emptyset$
for all $v,w \in \Lambda^0$. The following characterization then holds.

\begin{lemma}(\cite[Lemma 4.1]{aHLRS})\label{lemma: Lambda strongly connected} For a finite $k$-graph the following are equivalent:
\begin{enumerate}
\item[(i)] $\Lambda$ is strongly connected.
\item[(ii)] $\{A_1,\dots ,A_k\}$ is an irreducible family of matrices.
\item[(iii)] There exists a finite set $F\subset \N^k$ such that for all $v,w\in \Lambda^0$ there is
$\lambda\in \Lambda$ with $d(\lambda)\in F$ such that $s(\lambda)=w$ and $r(\lambda)=v$.
\end{enumerate}
\end{lemma}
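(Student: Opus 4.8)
The plan is to establish the two equivalences (i) $\Leftrightarrow$ (iii) and (ii) $\Leftrightarrow$ (iii) separately, since condition (iii) is the common ``finite-degree'' bridge between the path picture of (i) and the matrix picture of (ii). The single computation underpinning everything is the path-counting identity
\[ A_F(v,w) = \sum_{n\in F} A^n(v,w) = \sum_{n \in F} |v\Lambda^n w|, \]
which follows at once from $A^n(v,w) = |v\Lambda^n w|$ and the definition $A_F = \sum_{n\in F} A^n$. Given this, each implication becomes a routine translation, and the only genuinely structural input is the finiteness of the vertex set $\Lambda^0$.

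For (i) $\Leftrightarrow$ (iii): the implication (iii) $\Rightarrow$ (i) is immediate because $v\Lambda^n w \subseteq v\Lambda w$, so a path of degree $n \in F$ from $w$ to $v$ witnesses $v\Lambda w \neq \emptyset$. Conversely, (i) $\Rightarrow$ (iii) is exactly where finiteness of $\Lambda^0$ enters: there are only finitely many ordered pairs $(v,w) \in \Lambda^0 \times \Lambda^0$, so using strong connectivity I would choose for each pair a path $\lambda_{v,w} \in v\Lambda w$ and set $F := \{ d(\lambda_{v,w}) : v, w \in \Lambda^0\}$, a finite subset of $\N^k$ that by construction satisfies (iii). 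For (ii) $\Leftrightarrow$ (iii): reading the identity above, the matrix $A_F$ has $A_F(v,w) > 0$ precisely when some summand $|v\Lambda^n w|$ with $n \in F$ is nonzero, i.e.~precisely when the pair $(v,w)$ is joined by a path of degree lying in $F$; hence positivity of $A_F$ for all pairs is literally condition (iii), modulo the extra clause in (ii) that each $A_i$ be nonzero.

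The hard part -- really the only non-formal point -- is that clause requiring each coordinate matrix $A_i$ to be nonzero, which the positivity of $A_F$ does not by itself detect: a strongly connected $(k-1)$-graph viewed as a $k$-graph with no color-$k$ edges satisfies (i) and (iii) yet has $A_k = 0$, so some additional hypothesis is unavoidable. I would dispatch it using the standing assumption that $\Lambda^\infty$ is nonempty (guaranteed, e.g., by $\Lambda$ having no sources): since every infinite path contains infinitely many edges of each color, nonemptiness of $\Lambda^\infty$ forces $\Lambda^{e_i} \neq \emptyset$ and therefore $A_i \neq 0$ for every $i$. With this observation the nonvanishing clause is automatic and contributes nothing to the implications, so assembling (i) $\Leftrightarrow$ (iii) $\Leftrightarrow$ (ii) completes the proof.
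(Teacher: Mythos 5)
Your argument is correct, but there is no in-paper proof to compare it against: the paper states this lemma purely as a citation of \cite[Lemma~4.1]{aHLRS} and proves nothing itself. Your route is the standard one and, as far as the two substantive implications go, coincides with the argument in the cited source: (i)~$\Rightarrow$~(iii) by using finiteness of $\Lambda^0$ to collect the degrees of one chosen path $\lambda_{v,w} \in v\Lambda w$ per pair into a finite set $F$, and (ii)~$\Leftrightarrow$~(iii) by reading the entrywise identity $A_F(v,w) = \sum_{n \in F} |v\Lambda^n w|$, which the paper has already supplied via $A^n(v,w) = |v\Lambda^n w|$. Both steps are airtight as you have written them.

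The one point deserving comment is your treatment of the clause that each $A_i$ be nonzero. You are right that this clause follows from neither (i) nor (iii), and your degenerate example --- a strongly connected $(k-1)$-graph regarded as a $k$-graph with $\Lambda^{e_k} = \emptyset$ --- genuinely falsifies the lemma as literally stated; so some nondegeneracy convention is indeed unavoidable. However, ``$\Lambda^\infty \neq \emptyset$'' is not actually a standing assumption of this paper (its standing hypotheses are only that $\Lambda$ be finite and, where relevant, strongly connected), so strictly speaking your proof establishes the lemma only under the added hypothesis that $\Lambda^{e_i} \neq \emptyset$ for every $i$ --- which is exactly what nonemptiness of $\Lambda^\infty$ buys you, by the paper's remark that every infinite path contains edges of every color. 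In substance this is the right resolution: note that $A_i \neq 0$ is \emph{equivalent} to $\Lambda^{e_i} \neq \emptyset$, since the entries of $A_i$ sum to $|\Lambda^{e_i}|$, so the cleanest fix is to impose $\Lambda^{e_i} \neq \emptyset$ for all $i$ directly, with no detour through $\Lambda^\infty$. The same tacit convention is in force in \cite{aHLRS}, whose irreducible families consist of nonzero matrices by fiat and whose Lemma~2.1 (strong connectivity implies source-freeness, invoked in Section~5 of the present paper) likewise presupposes each color class is nonempty. So: flag the hypothesis explicitly rather than attributing it to the paper, and your proof is complete.
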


The $k$-graph algebra $C^*(\Lambda)$ of a (finite) $k$-graph $\Lambda$ is the $C^*$-algebra that is universal for families $\{s_\lambda \mid
\lambda\in\Lambda\}$ satisfying
\begin{itemize}
\item[(CK1)] $\{s_v \mid v \in \Lambda^0\}$ is a family of mutually orthogonal
    projections;
\item[(CK2)] $s_\lambda s_\nu = s_{\lambda\nu}$ whenever $s(\lambda) = r(\nu)$;
\item[(CK3)] $s_\lambda^* s_\lambda = s_{s(\lambda)}$ for all $\lambda \in \Lambda$;
\item[(CK4)] For any $v\in \Lambda^0$ and any $n\in \N^k$, we have  $s_v = \sum_{\lambda \in v\Lambda^n} s_\lambda s^*_\lambda$.
\end{itemize}
Relations (CK4) and (CK3) enable us to rewrite any element $s_\alpha ^* s_\beta \in C^*(\Lambda)$ as a finite sum of terms of the form $ s_\lambda s_\eta^*$; to be precise, let
\[\Lambda^{min}(\alpha, \beta) := \{ (\xi, \zeta) \in \Lambda \times \Lambda \mid \alpha \xi = \beta \zeta \text{ and } d(\alpha \xi) = d(\alpha) \vee d(\beta)\}.\]
Here $d(\alpha) \vee d(\beta) $ denotes the coordinatewise maximum of $d(\alpha), d(\beta) \in \N^k$.
Moreover, relation (CK4) combines with the fact that each $s_\alpha$ is a  partial isometry to tell us that $\sum_{\rho: \alpha \not= \rho \in r(\alpha) \Lambda^{d(\alpha)}}  s_\rho s_\rho^* s_\alpha = 0$.  Consequently,
\begin{equation}
\label{eq:s-alpha s-beta}
\begin{split}
s_\alpha^* s_\beta &= \sum_{\xi \in s(\alpha) \Lambda^{d(\alpha) \vee d(\beta) - d(\alpha)}, \zeta \in s(\beta)\Lambda^{d(\alpha) \vee d(\beta) - d(\beta)}} s_\xi s_\xi^* s_\alpha^*   s_\beta s_\zeta s_\zeta^* \\
&= \sum_{(\xi, \zeta) \in \Lambda^{min}(\alpha, \beta)} s_\xi s_{ \alpha\xi}^* s_{\beta \zeta} s_\zeta^* = \sum_{(\xi, \zeta) \in \Lambda^{min}(\alpha, \beta)} s_\xi s_\zeta^*.
\end{split}
\end{equation}
In other words,
$\{ s_\lambda s_\eta^* \mid \lambda, \eta \in \Lambda\}$ densely spans $C^*(\Lambda)$.

We can also view $C^*(\Lambda)$  as a groupoid $C^*$-algebra \cite[Corollary 3.5(i)]{kp}.  Namely,  if we set
\[\G_\Lambda := \{ ( x, n, y ) \in \Lambda^\infty \times \Z^k \times \Lambda^\infty\mid \exists \ j, \ell \in \N^k \text{ s.t. } \sigma^j(x) = \sigma^\ell(y) \text{ and } n = j-\ell\},\]
then the isomorphism $C^*(\Lambda) \cong C^*(\G_\Lambda)$ identifies $s_\lambda s_\mu^* \in C^*(\Lambda)$ with the characteristic function $\chi_{Z(\lambda, \mu)} \in C_c(\G_\Lambda) \subseteq C^*(\G_\Lambda)$, where
\[ Z(\lambda, \mu) = \{ (x, d(\lambda) - d(\mu), y) \in \G_\Lambda \mid x \in Z(\lambda), \ y \in Z(\mu)\}.\]
The sets $\{Z(\lambda, \mu)\mid  s(\lambda) = s(\mu)\}$ constitute a compact open basis for the topology on $\G_\Lambda$, which is an ample groupoid.

Observe that $\Lambda^\infty \cong \{ (x, 0, x) \mid x \in \Lambda^\infty\}$ is the unit space of $\G_\Lambda$; therefore, $C(\Lambda^\infty)$ can be viewed as a subalgebra of $C^*(\G_\Lambda) \cong C^*(\Lambda)$.  The identification $\chi_{Z(\lambda, \mu)} \leftrightarrow s_\lambda s_\mu^*$, and the fact that the cylinder sets $Z(\lambda)$ are compact and open in $\Lambda^\infty$, allows one to see that $C(\Lambda^\infty) \cong \overline{\text{span}} \{ s_\lambda s_\lambda^* \mid \lambda \in \Lambda\}$.  Indeed, we have a canonical conditional expectation $\Psi: C^*(\Lambda) \to C(\Lambda^\infty)$ which is given on the generators by
$ \Psi(s_\lambda s_\mu^*) = \delta_{\lambda, \mu} s_\lambda s_\lambda^*.$

\section{$\R_+$-functors and measures on the infinite path space}\label{sec:measures}
The following definition is based on thinking of the non-negative real numbers as a category, with one object (namely 0) and composition of morphisms given by addition.  Throughout the paper, we will write 
\[ \R_+ := [0, \infty) \quad \text{ and } \quad \R_{>0} := (0, \infty).\]
\begin{defn}[\cite{McNamara}]
Let $\Lambda$ be a higher-rank graph.  A \emph{$
\mathbb R_+$-functor} (called a {\em weight functor} in \cite[Section 5.3]{McNamara}) on $\Lambda$ is a function $y: \Lambda \to [0, \infty)$ such that
\begin{itemize}
\item $y(v) = 0 $ for every $ v \in \Lambda^0$, and
\item $y(\lambda \nu) = y (\lambda) + y(\nu)$ whenever $s(\lambda) = r(\nu)$.
\end{itemize}
\end{defn}

We next recall another definition due to McNamara which constructs generalized coordinate matrices from a $\R_+$-functor and a nonnegative parameter.

\begin{defn}(\cite[Definition 5.10]{McNamara}) Let $\Lambda$ be a finite $k$-graph. For $y:\Lambda\to [0,\infty)$  a $\mathbb R_+$-functor and $\theta\in [0,\infty)$, define matrices $B_i(y,\theta)$ for each $i=1,\dots ,k$ by
\begin{equation}\label{eq:Bi matrices}
B_i(y,\theta)_{v,w}: = \sum_{\lambda \in v\Lambda^{e_i} w} e^{-\theta y(\lambda)}.
\end{equation}
\label{def:Bi-mcs}
\end{defn}

It is established in \cite[Lemma 5.11]{McNamara} that for any choice of $y$ and $\theta$, the matrices $\{B_i(y,\theta)\}_{i=1, \ldots, k}$  pairwise commute. Further,  in \cite[Lemma 5.13]{McNamara} McNamara proves that whenever all of the matrices $A_1,\dots ,A_k$ are irreducible, so are all the matrices $B_i(y,\theta)$.

We next notice that if $\Lambda$ is a finite strongly connected graph, then for every choice of $\R_+$-functor $y$ and $\theta$ in $[0,\infty)$, the matrices $\{B_i(y,\theta)\}_{i=1,\dots, k}$ form an irreducible family.

\begin{lemma}\label{lem:B irred family} Let $\Lambda$ be a strongly connected $k$-graph,  $y:\Lambda\to [0,\infty)$ a $\R_+$-functor and $\theta \in [0,\infty)$. Then $\{B_i(y,\theta)\}_{i=1,\dots, k}$ is an irreducible family of matrices.
\end{lemma}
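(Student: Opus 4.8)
The plan is to exploit the fact that the matrices $B_i(y,\theta)$ have exactly the same pattern of zero and nonzero entries as the coordinate matrices $A_i$, and then to transfer the irreducibility of $\{A_1,\dots,A_k\}$ (which holds by Lemma \ref{lemma: Lambda strongly connected}, since a finite strongly connected $k$-graph has irreducible coordinate matrices) across this identification. Concretely, because every summand $e^{-\theta y(\lambda)}$ in \eqref{eq:Bi matrices} is strictly positive, we have $B_i(y,\theta)_{v,w} > 0$ precisely when $v\Lambda^{e_i} w \neq \emptyset$, that is, precisely when $A_i(v,w) > 0$. Thus $B_i(y,\theta)$ and $A_i$ have the same support for each $i$; in particular each $B_i(y,\theta)$ is nonzero, since each $A_i$ is nonzero (being a member of an irreducible family).

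The key point is that irreducibility of a commuting family of nonnegative matrices, in the sense defined above, is a property of their supports alone. I would record the elementary observation that for nonnegative matrices $M, N$, the $(v,w)$ entry of $MN$ is positive if and only if there is an intermediate index $u$ with $M_{v,u} > 0$ and $N_{u,w} > 0$ — a condition depending only on $\supp(M)$ and $\supp(N)$, since a sum of nonnegative terms is positive exactly when one term is. Iterating, the support of any product of nonnegative matrices is determined by the supports of the factors. Writing $B^n := \prod_{i=1}^k B_i(y,\theta)^{n_i}$ (well defined since the $B_i(y,\theta)$ pairwise commute by \cite[Lemma 5.11]{McNamara}) and comparing with $A^n = \prod_{i=1}^k A_i^{n_i}$, it follows that $\supp(B^n) = \supp(A^n)$ for every $n \in \N^k$. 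Equivalently, and more transparently, one can check by induction on $|n|$ using the factorization property and $y(\mu\nu) = y(\mu)+y(\nu)$ that $B^n_{v,w} = \sum_{\lambda \in v\Lambda^n w} e^{-\theta y(\lambda)}$, so that $B^n_{v,w} > 0 \iff A^n(v,w) = |v\Lambda^n w| > 0$.

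To finish, I would invoke Lemma \ref{lemma: Lambda strongly connected} to produce a finite set $F \subset \N^k$ for which $A_F = \sum_{n\in F} A^n$ is positive. Since the support of a finite sum of nonnegative matrices is the union of the supports of the summands, the previous paragraph gives $\supp\big(\textstyle\sum_{n\in F} B^n\big) = \supp(A_F)$, and as $A_F$ has all entries positive, so does $B_F(y,\theta) := \sum_{n\in F} B^n$. Together with the nonvanishing of each $B_i(y,\theta)$, this is exactly the assertion that $\{B_i(y,\theta)\}_{i=1,\dots,k}$ is an irreducible family. I expect no serious obstacle here: the only point requiring any care is verifying that products (and the distinguished sum $B_F$) genuinely preserve supports, which reduces to the one-line fact about positivity of sums of nonnegative terms; the deeper structural input — namely that $\Lambda$ strongly connected forces $A_F$ positive for some finite $F$ — is already supplied by Lemma \ref{lemma: Lambda strongly connected}.
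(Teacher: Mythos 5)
Your proof is correct and follows essentially the same route as the paper: establish the entry formula $B(y,\theta)^n_{v,w} = \sum_{\lambda \in v\Lambda^n w} e^{-\theta y(\lambda)}$, take the finite set $F$ supplied by Lemma \ref{lemma: Lambda strongly connected}, and conclude that $\sum_{n\in F} B(y,\theta)^n$ is entrywise positive because each entry of $A_F$ is nonzero and every summand $e^{-\theta y(\lambda)}$ is strictly positive. Your additional framing via supports of nonnegative matrices, and your explicit check that each $B_i(y,\theta)$ is nonzero, are harmless elaborations of the same argument.
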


\begin{proof}
For each $n\in \N^k$  we have (cf.~\cite[Definition 5.12]{McNamara}) a new $\Lambda^0\times \Lambda^0$ matrix:
\begin{equation}\label{eq:B upper multiindex n}
B(y,\theta)^n:=\prod_{i=1}^k B_i(y,\theta)^{n_i}.
\end{equation}
The fact that the matrices $B_i(y,\theta)$ pairwise commute implies that this definition is independent of our choice of ordering on the generators of $\N^k.$

Since $y$ is additive, the $(v,w)$ entry in $B(y,\theta)^n$ is given by
\begin{equation}\label{eq: B upper multiindex n at vw}
B(y,\theta)^n_{v,w}=\sum_{\lambda\in v\Lambda^nw}e^{-\theta y(\lambda)}.
\end{equation}
Let $F\subset \N^k$ be the finite set of degrees given by Lemma~\ref{lemma: Lambda strongly connected}. Similar to the definition of $A_F$, but using an upper subscript to ease the notation, let
\[B(y,\theta)^F:=\sum_{n\in F} B(y,\theta)^n.
\]
It suffices to prove that $B(y,\theta)^F$ is positive, that is, for each $v,w$ in $\Lambda^0$ we have $B(y,\theta)^F_{v,w}>0$. By construction,
\[
B(y,\theta)^F_{v,w}=\sum_{n\in F}\sum_{\lambda\in v\Lambda^n w}e^{-\theta y(\lambda)}
\]
and by our assumption, $\sum_{n\in F}\vert v\Lambda^n w\vert = A_F(v,w) \not= 0$.
\end{proof}

Consequently, \cite[Proposition 3.1]{aHLRS} implies that there is a unique vector $\xi^{y,\theta} \in \left( \R_{>0} \right)^{\Lambda^0}$ with $\sum_{v\in \Lambda^0} \xi^{y,\theta}_v = 1$ and
\[ B_i(y,\theta) \xi^{y,\theta} = \rho(B_i(y,\theta)) \xi^{y,\theta} \ \text{ for all }   1\leq i \leq k,\]
where $\rho(B_i(y,\theta))>0$ is the spectral radius of the matrix $B_i(y,\theta)$.

\begin{defn}[Notation]
Throughout this paper, we will write 
\[ \rho(B(y,\theta)) := (\rho (B_1(y,\theta)), \ldots, \rho (B_k(y,\theta))).\]
For $m\in \Z^k$, we let $\rho(B(y,\theta))^m $ denote the product $\prod_{i=1}^k\rho (B_i(y,\theta))^{m_i}$.
\end{defn}

Measures on the infinite path space of a finite, strongly connected $k$-graph have recently gained attention starting with the construction in \cite[Proposition 8.1]{aHLRS} that was motivated by the study of KMS states, and continued by constructions relating to Hausdorff and Markov measures in  \cite{FGJKP1,FGJorKP}. Here we construct a Borel probability measure on $\Lambda^\infty$ for any $\R_+$-functor and parameter $\theta\in [0,\infty)$.
\begin{prop}
\label{prop:unique-measure}
Let $\Lambda$ be a finite, strongly connected $k$-graph, $y$ a $\R_+$-functor on $\Lambda$, and $\theta \in [0,\infty)$.
The measure $\mu_{y,\theta}$ on $\Lambda^\infty$ given by
\begin{equation}
\mu_{y,\theta}(Z(\lambda)) =  e^{-\theta y(\lambda)} \rho(B(y,\theta))^{-d(\lambda)}\xi^{y,\theta}_{s(\lambda)}\label{eq:mu-y-beta}
\end{equation}
is the unique measure $\mu$ on $\Lambda^\infty$ such that $\mu (Z(v)) > 0$ for all $v \in \Lambda^0$ and
\begin{equation}
\mu(Z(\lambda)) = e^{-\theta y(\lambda)} \rho(B(y,\theta))^{-d(\lambda)} \mu(Z(s(\lambda)).
\label{eq:conformal-condition}
\end{equation}
Moreover, the measures $\mu_{y,\theta}$ are all probability measures.
\end{prop}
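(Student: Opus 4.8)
The plan is to verify that the formula \eqref{eq:mu-y-beta} determines a consistent assignment on the cylinder sets, to extend it to a Borel measure by a standard theorem, and then to read off the conformal condition, uniqueness and the probability property from the Perron--Frobenius data. First I would record the values on vertices: for $v \in \Lambda^0$ one has $d(v) = 0$, $y(v) = 0$ and $s(v) = v$, so \eqref{eq:mu-y-beta} gives $\mu_{y,\theta}(Z(v)) = \xi^{y,\theta}_v$. Substituting this for $\mu_{y,\theta}(Z(s(\lambda)))$ in \eqref{eq:mu-y-beta} makes the conformal condition \eqref{eq:conformal-condition} immediate.

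The heart of the argument is well-definedness. For any $\lambda \in \Lambda$ and any $i$, the factorization property yields the disjoint decomposition $Z(\lambda) = \bigsqcup_{\nu \in s(\lambda)\Lambda^{e_i}} Z(\lambda\nu)$, so I must check the branching identity $\mu_{y,\theta}(Z(\lambda)) = \sum_{\nu \in s(\lambda)\Lambda^{e_i}} \mu_{y,\theta}(Z(\lambda\nu))$. Using $y(\lambda\nu) = y(\lambda) + y(\nu)$, $d(\lambda\nu) = d(\lambda) + e_i$ and $s(\lambda\nu) = s(\nu)$, the right-hand side factors as $e^{-\theta y(\lambda)}\rho(B(y,\theta))^{-d(\lambda)}\rho(B_i(y,\theta))^{-1}$ times $\sum_{\nu \in s(\lambda)\Lambda^{e_i}} e^{-\theta y(\nu)}\xi^{y,\theta}_{s(\nu)}$, and by \eqref{eq:Bi matrices} this last sum is exactly $(B_i(y,\theta)\xi^{y,\theta})_{s(\lambda)} = \rho(B_i(y,\theta))\,\xi^{y,\theta}_{s(\lambda)}$ by the eigenvector equation. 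The factors of $\rho(B_i(y,\theta))$ cancel and the identity drops out. Iterating over the $e_i$ (legitimate since the $B_i(y,\theta)$ commute) gives consistency for every $n \in \N^k$. Since the cylinder sets form a basis of compact open sets for the compact Hausdorff space $\Lambda^\infty$, with intersections expressible as finite disjoint unions of cylinders via the sets $\Lambda^{min}$, this consistency lets me extend the assignment to a Borel measure by the Carath\'eodory extension theorem (equivalently, by defining a positive functional on $\overline{\text{span}}\{\chi_{Z(\lambda)}\} = C(\Lambda^\infty)$ and applying Riesz representation). I expect this passage from a consistent cylinder-assignment to an honest Borel measure to be the main technical obstacle, since it requires care with the semiring structure of the cylinder sets.

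For uniqueness, suppose $\mu$ is any Borel measure with $\mu(Z(v)) > 0$ for all $v$ and satisfying \eqref{eq:conformal-condition}. The conformal condition shows that $\mu$ is determined on all cylinders by the vertex values $\eta_v := \mu(Z(v))$. Applying additivity of $\mu$ to the decomposition $Z(v) = \bigsqcup_{\nu \in v\Lambda^{e_i}} Z(\nu)$ and then \eqref{eq:conformal-condition} to each term reproduces the computation above, forcing $B_i(y,\theta)\eta = \rho(B_i(y,\theta))\eta$ for every $i$. Thus $\eta$ is a strictly positive common eigenvector of the family $\{B_i(y,\theta)\}$, so by the uniqueness in \cite[Proposition 3.1]{aHLRS} (available through Lemma \ref{lem:B irred family}) it is a positive scalar multiple of $\xi^{y,\theta}$, whence $\mu = c\,\mu_{y,\theta}$. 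Finally, evaluating \eqref{eq:mu-y-beta} on the clopen partition $\Lambda^\infty = \bigsqcup_{v \in \Lambda^0} Z(v)$ gives $\mu_{y,\theta}(\Lambda^\infty) = \sum_v \xi^{y,\theta}_v = 1$ by the normalization of $\xi^{y,\theta}$, so $\mu_{y,\theta}$ is a probability measure; the same computation shows $c = \mu(\Lambda^\infty)$, so fixing the total mass to be $1$ pins down $c = 1$ and yields $\mu = \mu_{y,\theta}$.
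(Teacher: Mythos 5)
Your proposal is correct and follows essentially the same route as the paper: the eigenvector equation $B_i(y,\theta)\,\xi^{y,\theta} = \rho(B_i(y,\theta))\,\xi^{y,\theta}$ drives the consistency of the cylinder-set assignment, a standard extension theorem then produces the Borel measure, and uniqueness comes from the Perron--Frobenius uniqueness of \cite[Proposition 3.1]{aHLRS}, exactly as in the paper. The only cosmetic differences are that the paper checks additivity on square cylinder sets $d(\lambda)=(n,\ldots,n)$ and cites a Kolmogorov extension lemma from \cite{FGJorKP} where you refine one color $e_i$ at a time and invoke Carath\'eodory/Riesz, and that you track the normalization scalar $c$ explicitly where the paper simply assumes $\mu(\Lambda^\infty)=1$ without loss of generality.
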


\begin{proof}
The fact that $y(v) = 0$ whenever $v \in \Lambda^0$ implies that $ \mu_{y,\theta}(Z(v)) = \xi^{y,\theta}_v > 0$ for all $v \in \Lambda^0$, and that $ \mu_{y,\theta}(Z(v)) $  satisfies Equation \eqref{eq:conformal-condition}.  Since
\[ \mu_{y,\theta}(\Lambda^\infty) = \sum_{v \in \Lambda^0} \mu_{y,\theta}(Z(v)) = \sum_{v \in \Lambda^0} {\xi}^{y,\theta}_v = 1,\]
showing that $\mu_{y,\theta}$ is indeed a measure will imply that $\mu_{y,\theta}$ is a probability measure.

To establish the claim that $\mu_{y,\theta}$ is a measure, we first observe that it is finitely additive on  cylinder sets $Z(\lambda)$ with $d(\lambda) = (n, \ldots, n)\in \N^k$ for some $n\in \N$.  To see this, it suffices to show that
\[\mu_{y,\theta}(Z(v)) = \sum_{\lambda \in v\Lambda^{(n, \ldots, n)}} \mu_{y,\theta}(Z(\lambda))\]
for any $n \in \N$.  Observe that
\begin{align*}
\sum_{\lambda \in v\Lambda^{(n, \ldots, n)}} \mu_{y,\theta}(Z(\lambda)) & = \rho(B(y,\theta))^{-(n,\ldots, n)} \sum_{w\in \Lambda^0} B(y,\theta)^{(n, \ldots, n)}(v,w) \xi^{y,\theta}_w \\
&  = \xi^{y,\theta}_v = \mu(Z(v))\end{align*}
by the definition of the matrices $B_i(y,\theta)$ and their common eigenvector $\xi^{y,\theta}$.  In other words, $\mu_{y,\theta}$ is indeed finitely additive on square cylinder sets.

The fact that $\mu_{y,\theta}$ defines a measure on $\Lambda^\infty$ now follows from the Kolmogorov Extension Theorem  (cf.~Lemma 2.12
of \cite{FGJorKP}).

 Suppose now that a measure $\mu$ satisfies  Equation
\eqref{eq:conformal-condition}.  Without loss of generality, we will further assume that $\mu(\Lambda^\infty) = 1$.  Define $m \in \R_+^{\Lambda^0}$ by $m(v) = \mu(Z(v))$.  For any $1 \leq i \leq k$, Equation \eqref{eq:conformal-condition} implies that
\begin{align*}
m(v) &= \sum_{\lambda \in v\Lambda^{e_i}}\mu(Z(\lambda)) = \sum_{\lambda \in v\Lambda^{e_i}} e^{-\theta y(\lambda)} \rho(B_i(y,\theta))^{-1} m(s(\lambda)) \\
&= \rho(B_i(y,\theta))^{-1} \sum_{w \in \Lambda^0} B_i(y,\theta)_{v,w} m(w).
\end{align*}
In other words, $m \in \R^{\Lambda^0}_+$ is a positive eigenvector for each matrix $B_i(y,\theta)$, with eigenvalue $\rho(B_i(y,\theta))$.  Moreover, our hypothesis that $\mu(\Lambda^\infty) = 1$ implies that $\sum_{v\in \Lambda^0} m(v) = 1$.  Proposition 3.1 of \cite{aHLRS} therefore implies that $m = \xi^{y,\theta}$, so
\[ \mu(Z(v)) = \mu_{y,\theta}(Z(v))\]
for all $v\in \Lambda^0$.  Equation \eqref{eq:conformal-condition} now implies that $\mu(Z(\lambda)) = \mu_{y,\theta}(Z(\lambda))$ for all $\lambda \in \Lambda$.  Since the topology (and the associated Borel structure) of $\Lambda^\infty$ are generated by the cylinder sets $Z(\lambda)$, it follows that
$\mu = \mu_{y,\theta}.$
\end{proof}

\begin{thm}
\label{thm:spec-rad-cts}
Let $\Lambda$ be a finite, strongly connected $k$-graph, $y$ a $\R_+$-functor on $\Lambda$, and $\theta\in[0,\infty)$. The common eigenvector $\xi^{y,\theta}$ for the matrices $B_i(y,\theta)$, as well as
 the spectral radii $\{\rho(B_i(y,\theta))\}_{i=1}^k,$ depend smoothly on the entries of the matrix $B_i(y,\theta)$.  In particular, the spectral radii   and the eigenvector $\xi^{y,\theta}$ vary smoothly with $\theta$.
\end{thm}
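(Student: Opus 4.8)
The plan is to reduce the statement to two classical facts: that the Perron eigenvalue of a strictly positive matrix is a \emph{simple} eigenvalue, and that a simple eigenvalue together with its normalized eigenvector depends smoothly (indeed real-analytically) on the matrix entries. First I would dispose of the dependence on $\theta$, which poses no difficulty on its own: by \eqref{eq:Bi matrices} each entry $B_i(y,\theta)_{v,w} = \sum_{\lambda\in v\Lambda^{e_i}w} e^{-\theta y(\lambda)}$ is a \emph{finite} sum of exponentials (finite because $\Lambda$ is finite), hence a real-analytic, and in particular smooth, function of $\theta$. It therefore suffices to establish smooth dependence of $\xi^{y,\theta}$ and of the $\rho(B_i(y,\theta))$ on the matrix entries, and then precompose with the smooth map $\theta\mapsto (B_i(y,\theta)_{v,w})_{i,v,w}$.

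The key observation is that the common eigenvector $\xi^{y,\theta}$ is the Perron eigenvector of the \emph{strictly positive} matrix $B(y,\theta)^F$ built in the proof of Lemma~\ref{lem:B irred family}. Indeed, since $B_i(y,\theta)\,\xi^{y,\theta} = \rho(B_i(y,\theta))\,\xi^{y,\theta}$ and the $B_i(y,\theta)$ commute, $\xi^{y,\theta}$ is a positive eigenvector of $B(y,\theta)^F = \sum_{n\in F} B(y,\theta)^n$ with eigenvalue $\sum_{n\in F}\rho(B(y,\theta))^n$; as $B(y,\theta)^F$ is strictly positive, Perron's theorem identifies this as its simple Perron eigenvalue and $\xi^{y,\theta}$ as the associated (unique, up to scaling) Perron eigenvector. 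This reduction through $B(y,\theta)^F$ is precisely what circumvents the main obstacle, described below.

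Next I would invoke the implicit function theorem. Consider the map $G(B,\xi,\lambda) = \bigl(B\xi - \lambda\xi,\ \mathbf 1^{T}\xi - 1\bigr)$, where $\mathbf 1$ is the all-ones vector. At a point where $\lambda$ is a simple eigenvalue of $B$ with positive eigenvector $\xi$ normalized by $\mathbf 1^{T}\xi = 1$, the partial Jacobian in $(\xi,\lambda)$ is the bordered matrix $\left(\begin{smallmatrix} B-\lambda I & -\xi \\ \mathbf 1^{T} & 0\end{smallmatrix}\right)$, and I claim it is invertible. If $(v,t)$ lies in its kernel, then $(B-\lambda I)v = t\,\xi$; pairing with a left Perron eigenvector $w$ (for which $w^{T}\xi > 0$) gives $t\,(w^{T}\xi) = w^{T}(B-\lambda I)v = 0$, so $t=0$; then $v\in\ker(B-\lambda I) = \R\xi$ by simplicity, and $\mathbf 1^{T}v = 0$ forces $v=0$. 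The implicit function theorem then produces $\xi$ and $\lambda$ as smooth functions of the entries of $B$ in a neighborhood of $B(y,\theta)^F$, and applying this with $B = B(y,\theta)^F$ shows that $\xi^{y,\theta}$ depends smoothly on the entries of the $B_i(y,\theta)$, hence smoothly on $\theta$. The individual spectral radii are then recovered smoothly: summing the coordinates of $B_i(y,\theta)\,\xi^{y,\theta} = \rho(B_i(y,\theta))\,\xi^{y,\theta}$ and using $\mathbf 1^{T}\xi^{y,\theta}=1$ yields $\rho(B_i(y,\theta)) = \mathbf 1^{T} B_i(y,\theta)\,\xi^{y,\theta}$, a polynomial in the smooth quantities $B_i(y,\theta)_{v,w}$ and $\xi^{y,\theta}_w$, and hence smooth in $\theta$.

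I expect the main obstacle to be exactly the point finessed in the second paragraph: under our standing hypotheses only the \emph{family} $\{B_i(y,\theta)\}$, rather than each individual matrix, is irreducible (cf.\ the discussion of \cite[Lemma~4.1, Example~4.3]{aHLRS}), so the Perron eigenvalue of a single $B_i(y,\theta)$ may fail to be simple, and one cannot apply perturbation theory to an individual $B_i$ directly. Passing to the strictly positive matrix $B(y,\theta)^F$, whose Perron eigenvalue is automatically simple, removes this difficulty, and the one genuinely technical step that remains is the invertibility of the bordered Jacobian above—equivalently, the fact that a simple eigenvalue's eigenvector lies outside $\operatorname{range}(B-\lambda I)$.
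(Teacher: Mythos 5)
Your proof is correct, and it reaches the conclusion by a leaner route than the paper's own argument. Both proofs hinge on the same two ingredients --- the strictly positive matrix $B(y,\theta)^F$ from Lemma \ref{lem:B irred family}, whose Perron eigenvalue is simple, and the Implicit Function Theorem --- but they deploy them differently. The paper applies the IFT to one large map $f:\R^{kn^2+k+n}\to\R^{k+n}$ whose unknowns are all $k$ eigenvalues and the eigenvector simultaneously (the eigen-equations for $i\geq 2$ being imposed only in the first coordinate, alongside the full equation $(X^F-p^F I)\vec v=0$), and invertibility of the resulting $(k+n)\times(k+n)$ Jacobian is verified by a hands-on computation passing through the Jordan form of $B^F$ and a backward induction on the coefficients $z_i$. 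You instead apply the IFT only to the bordered Perron problem $G(B,\xi,\lambda)=(B\xi-\lambda\xi,\ \mathbf 1^{T}\xi-1)$ for the single matrix $B^F$, where invertibility of the Jacobian is the standard left-eigenvector pairing you give, and you recover each individual $\rho(B_i(y,\theta))$ a posteriori from the identity $\rho(B_i(y,\theta))=\mathbf 1^{T}B_i(y,\theta)\,\xi^{y,\theta}$, which holds at every $\theta$ (not just the base point) because $\xi^{y,\theta}$ is the common eigenvector at every $\theta$ by Lemma \ref{lem:B irred family} and \cite[Proposition 3.1]{aHLRS}. This buys a much shorter invertibility verification and avoids the Jordan-form bookkeeping; what the paper's formulation buys is that the $\rho_i$ emerge directly as implicit functions rather than through a separate identity. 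One small point you should make explicit: the IFT produces a locally unique branch $(\xi(\theta),\lambda(\theta))$, and you must identify it with the actual Perron pair at nearby $\theta$. This is immediate --- the branch is continuous, strict positivity of $\xi$ is an open condition, and a strictly positive $\ell^1$-normalized eigenvector of the strictly positive matrix $B(y,\theta)^F$ is necessarily $\xi^{y,\theta}$ --- but as written the identification is left implicit.
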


\begin{proof}
Inspired by \cite{yeo-blog}, we use the Implicit Function Theorem. Suppose that $\Lambda$  has $n$ vertices.
Since $\Lambda$ is strongly connected, we know from Lemma \ref{lem:B irred family} that there is a finite subset $ F \subseteq \N^k$ (which does not depend on $y$ or $\theta$) such that $B(y,\theta)^{ F}$ is a positive matrix.  Moreover, $\xi^{y,\theta}$ is the 
unique eigenvector for $B(y,\theta)^F$ with eigenvalue
$\rho(B(y,\theta)^F)$ and $\ell^1$-norm 1.

Observe first that there must exist at least one $i, \ 1 \leq i \leq k$, such that for some $f \in F$ we have $f_i \not= 0$.  (If this assertion were false then $B(y,\theta)^F = I$ would be the identity matrix, which is not positive.)  By re-ordering the indices $i$ if necessary, we will assume that
\begin{equation}
\exists \ f \in F : f_1 \not= 0 .
\label{eq:f_1-nonzero}
\end{equation}

To a vector $\vec x$  in $\R^{kn^2}$,
\[ \vec x =(\vec{x}^1, \ldots, \vec{x}^k), \quad \text{ where } \quad \vec{x}^\ell = (x^\ell_{11}, \ldots, x^\ell_{1n}, x^\ell_{21}, \ldots,x^\ell_{2n},  \ldots, x^\ell_{n1}, \ldots,  x^\ell_{nn}),\]
we associate the matrices $X_\ell := (x^\ell_{ij})_{i,j} \in M_n(\R)$, for $1 \leq \ell \leq k$. Then, for a family $X_1, \ldots, X_k$ of matrices, we  define
\[ X^F = \sum_{f \in F} \prod_{i=1}^k X_i^{f_i}.\]
  Similarly, if $\vec p = (p_1, \ldots, p_k)$, we define
$  p^F := \sum_{f\in F} \prod_{i=1}^k p_i ^{f_i}.$

Let $\vec v= (v_1, \ldots, v_n)$. We now define a smooth function $f: \R^{kn^2+ k + n}\to \R^{k+n}$ by
\begin{align*} f(& \vec{x}, \vec p, \vec v)\\
&
 =\left(-1 + \sum_{i=1}^n v_i , [X_2(\vec v)]_1 -  p_2v_1, \ldots,[X_k(\vec{v})]_1 - p_k v_1,  (X^F -  p^F\cdot I)\vec{v}\right).
\end{align*}
If  $\vec v$ is an eigenvector for each matrix $X_\ell = (x^\ell_{ij})_{i,j}$ with eigenvalue $p_\ell$, and $\| \vec v \|_1 = 1$, then $f(x^1_{11}, \ldots, x^k_{nn}, \vec p, \vec v) = \vec 0$. Moreover, the $(k+n) \times (k+n)$ matrix of partial derivatives of $f$ with respect to the variables $p_1, \ldots, p_k, v_1, \ldots, v_n$ is given by $\mathcal F = (F_{i,j})_{i,j}$, where if $2 \leq i, j \leq k$ and $1 \leq h \leq n$, we have
\[ F_{i,j} = \frac{\partial}{\partial p_j}(  [X_i(\vec v)]_1 - p_i v_1) = - \delta_{i,j} v_1; \qquad F_{i, k+h}= \frac{\partial}{\partial v_{h}}(  [X_i(\vec v)]_1 - p_i v_1) = x^i_{1h} .\]
Note that $F_{1, j} = \frac{\partial}{\partial p_j} \left( \sum_{h =1}^n v_h \right) = 0$ for all $1 \leq j \leq k$. Similarly,
\[F_{i, 1} = \frac{\partial}{\partial p_1} (  [X_i(\vec v)]_1 - p_i v_1) = 0 \ \forall \ 1 \leq i \leq k. \]
Moreover, $F_{1, k+h} = \frac{\partial}{\partial v_h} \left( \sum_{m=1}^n v_m - 1\right) = 1$.  If $1 \leq i \leq k$ and $1 \leq h\leq n$,
\[ F_{k+ h, i} = \frac{\partial}{\partial p_i}  \left( - p^F v_h +\sum_{j=1}^n ( X^F)_{h,j} v_j \right) = - v_h \sum_{f\in F} f_i p_i^{f_i -1} \prod_{j\not= i } p_j^{f_j} .\]
Finally,
if $1 \leq i , j \leq n $ we have
\[ F_{k+i, k+j}  = \frac{\partial}{\partial v_j} \left( - p^F v_i +\sum_{j=1}^n ( X^F)_{i,j} v_j \right) = X^F_{i,j} - \delta_{i,j} p^F.\]

Applied to the  situation when $X_i = B_i(y,\theta)$, $\vec{v} = \xi^{y,\theta}$, and $p_i= \rho(B_i(y,\theta)) = : \rho_i,$ the Implicit Function Theorem tells us that whenever the $(k+n) \times (k+n)$ matrix
\[ \mathcal F = \begin{pmatrix}
0 & 0 \qquad \cdots \qquad  0 & 1 \qquad  \cdots \qquad  1 \\
\vec{0} &
\text{diag}\, (- \xi^{y,\theta}_1, \ldots, - \xi^{y,\theta}_1) & \left ( B_i(y,\theta)_{1, h}\right)_{2 \leq i \leq k, \, 1 \leq h\leq n}  \\
& \left( -\xi^{y,\theta}_h \sum_{f\in F} \frac{f_i}{\rho_i} \prod_{j=1}^k \rho_j f_j \right)_{h, i} & (B(y,\theta)^F - \rho^F \cdot I)
\end{pmatrix}
\]
is invertible, the eigenvalues $\rho(B_i(y,\theta))$ and the common eigenvector $\xi^{y,\theta}$ of $\{ B_i(y,\theta)\}_{i=1}^k$ depend smoothly on the entries of the matrices $B_i(y,\theta)$.

  We now proceed to show that $\mathcal F$ is invertible.
Suppose that $\mathcal F\begin{pmatrix}
 q \\
\zeta
\end{pmatrix}=0$.  Then $\| \zeta\|_1 = 0$.  Moreover,
 for all $2 \leq i \leq k$,
\begin{equation}
\label{eq:if-F-q-zeta-equals-zero-1}
 [B_i(y,\theta)(\zeta)]_1 =  q_i \xi^{y,\theta}_1  \end{equation}
\[\text{ and }  \left( -\xi^{y,\theta}_h\sum_{i=1}^k \sum_{f\in F} \frac{f_i q_i }{\rho_i} \prod_{j=1}^k \rho_j f_j \right)= \sum_{m=1}^n B^F_{h, m} \zeta_m  -\rho^F \zeta_h, \ \text{equivalently,}\]
\begin{equation}
\label{eq:if-F-q-zeta-equals-zero}
\begin{split}
&
  (B^F - \rho^F \cdot I )(\zeta) = \left(\sum_{i=1}^k \sum_{f\in F} \frac{f_i q_i }{p_i} \prod_{j=1}^k p_j f_j \right) \xi^{y,\theta}.
  \end{split}\end{equation}
In particular, Equation \eqref{eq:if-F-q-zeta-equals-zero} implies that $(B^F - \rho^F \cdot I)(\zeta)$ is a multiple of $\xi^{y,\theta}$.

 Let $J$ be the Jordan form of $B^F$, and let  $\mathscr S:=  \{ \xi^{y,\theta},  x_1, \ldots,  x_{n-1}\}$ be a corresponding basis of $\R^n$.
Observe that  $ [ B^F - \rho^F \cdot I]_\mathscr S$ is an upper triangular matrix. If $\rho^F= \alpha_0, \alpha_1, \ldots, \alpha_{n-1}$ are the eigenvalues of $B^F,$ counted with multiplicity, then the diagonal of $ [ B^F - \rho^F \cdot I]_\mathscr S$ is given by $0, \alpha_1 - \rho^F, \ldots, \alpha_{n-1}- \rho^F$.

Writing $\zeta = z_0 \xi^{y,\theta} + \sum_{i=1}^{n-1} z_i x_i$, the fact that $[B^F - \rho^F \cdot I]_\mathscr S$ is upper triangular and has first row  zero implies that $(B^F - \rho^F \cdot I) (\zeta)$ is a linear combination of the vectors $(x_i)_{i=1}^{n-1}$:
\[[B^F - \rho^F \cdot I]_\mathscr S (\zeta) = \sum_{i=1}^{n-1} [ (\alpha_i - \rho^F) z_i  + J_{i, i+1} z_{i+1} ]x_i, \]
where we add $z_{i+1}$ only for those indices $i$ for which $J_{i, i+1} = 1$.

Since $(B^F - \rho^F \cdot I)(\zeta)$ is a multiple of $\xi^{y,\theta}$ whenever  $\mathcal F\begin{pmatrix}
 q \\ \zeta
\end{pmatrix} = 0$, the fact that $\alpha_{n-1} \not= \rho^F$ implies that $z_{n-1} = 0$ in this case.  Proceeding  ``backwards'' by induction (from $n-1$ towards $1$) reveals that, in fact, $z_i =0$ for all $1\leq i \leq n-1$.  In other words,
\[\mathcal F\begin{pmatrix}
 q \\ \zeta
\end{pmatrix} = 0 \Rightarrow \zeta = z_0 \xi^{y,\theta}.\]In particular, $B_i(y,\theta)(\zeta) = z_0 \rho_i \xi^{y,\theta}$ and (computed in the original basis) $\| \zeta\|_1 = z_0$.
Thus, the fact that $\|\zeta\|_1 = 0$ implies $z_0 = 0$, and hence $\zeta = 0$.

Moreover, using these formulas in Equation \eqref{eq:if-F-q-zeta-equals-zero-1} implies that
\[  q_i = z_0 \rho_i  \ \forall \ 2 \leq i \leq k \quad  \text{and hence}  \quad  q_i = 0 \text{ if } i > 1.\]

Recall from Equation \eqref{eq:if-F-q-zeta-equals-zero}  that $z_0 = \sum_{i=1}^k \sum_{f\in F} \frac{f_i q_i }{\rho_i} \prod_{j=1}^k \rho_j f_j $.
It then follows from the previous equation that
\[ q_1 \sum_{f\in F} \frac{f_1}{\rho_1}\prod_{j=1}^k \rho_j f_j  = 0.\]
Every term in the sum is non-negative, since $\rho_j > 0$ for all $j$ and $f_j \geq 0$.  Moreover, Equation \eqref{eq:f_1-nonzero} guarantees that there is at least one nonzero term in the sum; consequently, $q_1 = 0$.  We conclude that if $\mathcal F\begin{pmatrix}
q \\ \zeta
\end{pmatrix} = 0$ then $q = 0$ and $\zeta = 0$, as claimed.
\end{proof}

\begin{cor} Let $\Lambda$ be a finite, strongly connected $k$-graph equipped with a $\R_+$-functor $y$. In the weak*-topology on the space of measures on $\Lambda^\infty$, the function $\theta \mapsto \mu_{y,\theta}$ is continuous on $\R_{+}$.
\label{cor:continuity-of-measures}
\end{cor}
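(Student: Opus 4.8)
The plan is to reduce the asserted weak*-continuity to the continuity, for each fixed $\lambda \in \Lambda$, of the scalar functions $\theta \mapsto \mu_{y,\theta}(Z(\lambda))$, and then to read the latter off directly from the explicit formula \eqref{eq:mu-y-beta} together with Theorem \ref{thm:spec-rad-cts}. First I would observe that, since $\Lambda$ is finite, $\Lambda^\infty$ is compact, Hausdorff, and second countable (the countably many cylinder sets $Z(\lambda)$ form a basis of compact open sets); hence the space of Borel probability measures on $\Lambda^\infty$ is metrizable in the weak*-topology, and it suffices to prove sequential continuity. So I fix $\theta_0 \in \R_+$ and a sequence $\theta_n \to \theta_0$, and aim to show $\int f \, d\mu_{y,\theta_n} \to \int f \, d\mu_{y,\theta_0}$ for every $f \in C(\Lambda^\infty)$.

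The crucial structural fact is that the linear span $\mathcal A$ of the cylinder indicators $\{\chi_{Z(\lambda)} : \lambda \in \Lambda\}$ is dense in $C(\Lambda^\infty)$; this is precisely the identification $C(\Lambda^\infty) \cong \overline{\operatorname{span}}\{s_\lambda s_\lambda^* : \lambda \in \Lambda\}$ recorded in Section \ref{sec:prelim} (and also follows from Stone--Weierstrass, as each $\chi_{Z(\lambda)}$ is continuous because $Z(\lambda)$ is compact open, while $\mathcal A$ is a unital, self-adjoint, point-separating subalgebra). Since Proposition \ref{prop:unique-measure} guarantees that every $\mu_{y,\theta}$ is a probability measure, we have $\|\mu_{y,\theta}\| = 1$ uniformly in $\theta$, and a routine three-epsilon argument reduces the problem to $\mathcal A$: given $f$ and $\varepsilon > 0$, pick $g \in \mathcal A$ with $\|f - g\|_\infty < \varepsilon$, so that
\[ \left| \int f \, d\mu_{y,\theta_n} - \int f \, d\mu_{y,\theta_0} \right| \leq 2\varepsilon + \left| \int g \, d\mu_{y,\theta_n} - \int g \, d\mu_{y,\theta_0} \right|, \]
and the last term is a finite linear combination of differences $\mu_{y,\theta_n}(Z(\lambda)) - \mu_{y,\theta_0}(Z(\lambda))$.

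It therefore remains to prove that $\theta \mapsto \mu_{y,\theta}(Z(\lambda))$ is continuous for each $\lambda$, which I would do factor by factor in
\[ \mu_{y,\theta}(Z(\lambda)) = e^{-\theta y(\lambda)} \, \rho(B(y,\theta))^{-d(\lambda)} \, \xi^{y,\theta}_{s(\lambda)}. \]
The factor $e^{-\theta y(\lambda)}$ is visibly smooth in $\theta$; the coordinate $\xi^{y,\theta}_{s(\lambda)}$ of the Perron--Frobenius eigenvector varies smoothly with $\theta$ by Theorem \ref{thm:spec-rad-cts}; and $\rho(B(y,\theta))^{-d(\lambda)} = \prod_{i=1}^k \rho(B_i(y,\theta))^{-d(\lambda)_i}$ is continuous because each spectral radius $\rho(B_i(y,\theta))$ is smooth in $\theta$ (Theorem \ref{thm:spec-rad-cts}) and strictly positive (Lemma \ref{lem:B irred family} together with the Perron--Frobenius conclusion that follows it), so that the finite product of their reciprocal powers stays continuous. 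A finite product of continuous functions being continuous, the claim follows.

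As for the main obstacle: the genuine analytic content has already been absorbed into Theorem \ref{thm:spec-rad-cts}, so no real difficulty survives. The only step demanding care is the reduction --- justifying that testing against cylinder indicators is enough --- which rests on the density of $\mathcal A$ in $C(\Lambda^\infty)$ together with the uniform mass bound $\|\mu_{y,\theta}\| = 1$. Both are standard, but I would state them explicitly to license the passage from pointwise convergence on cylinder sets to full weak*-convergence.
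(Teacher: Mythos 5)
Your proposal is correct and follows the same route as the paper: the paper's proof likewise combines Theorem \ref{thm:spec-rad-cts} with the formula \eqref{eq:mu-y-beta} to get continuity of $\theta \mapsto \mu_{y,\theta}(Z(\lambda))$ for each fixed $\lambda$, and then invokes the density of $\operatorname{span}\{\chi_{Z(\lambda)}\}$ in $C(\Lambda^\infty)$ together with ``standard measure-theoretic arguments.'' Your write-up simply makes those standard arguments explicit (the three-epsilon reduction using the uniform bound $\|\mu_{y,\theta}\| = 1$ from Proposition \ref{prop:unique-measure}, and the positivity of the spectral radii needed for the factor $\rho(B(y,\theta))^{-d(\lambda)}$), which is a faithful elaboration rather than a different proof.
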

\begin{proof}
 Theorem \ref{thm:spec-rad-cts} and the definition \eqref{eq:mu-y-beta} of $\mu_{y,\theta}$ combine to tell us that for each fixed $\lambda \in \Lambda$, the function
\[ \theta \mapsto \mu_{y,\theta}(Z(\lambda))\]
is continuous (in fact smooth).  Since $\{ \chi_{Z(\lambda)}:\lambda \in \Lambda\}$ densely spans $C(\Lambda^\infty)$, whose dual is the space of measures on $\Lambda^\infty$, standard measure-theoretic arguments enable us to complete the proof.
\end{proof}

We conclude this section with some remarks about the relationship between  the measures $\mu_{y,\theta}$ and the periodicity of $\Lambda$.

\begin{defn}\cite[Lemma 3.2]{robertson-sims}
\label{def:periodicity}
A $k$-graph $\Lambda$ has {\em periodicity at }$v \in \Lambda^0$ or {\em is not aperiodic} if there exists $m \not= n \in \N^k$ such that for all $x \in Z(v), \ \sigma^m(x) = \sigma^n(x)$.  We define
\[\text{Per}(v) := \Z \{ m-n \mid \sigma^m(x) = \sigma^n(x), \ \forall \ x\in Z(v)\}.\]

If $\Lambda$ is strongly connected, then \cite[Lemma 5.1]{aHLRS} establishes that
\[\text{Per}(v) = \text{Per}(w) = : \text{Per}\Lambda\]
 for any $v, w \in \Lambda^0$.  In fact,
\begin{equation}
\label{eq:Per-Lambda}
\text{Per}\Lambda = \{ d(\lambda) - d(\nu) \mid \lambda x = \nu x, \ \forall \ x \in \Lambda^\infty\}.
\end{equation}
We define $P_\Lambda := \{ (\lambda, \nu) \in \Lambda \times \Lambda \mid \lambda x = \nu x, \ \forall \ x \in \Lambda^\infty\}$.
\end{defn}

\begin{rmk}
\label{rmk:aHLRS-Prop-8-2}
We observe that Lemma 8.4 of \cite{aHLRS}, and its proof, are still valid if we replace the measure $M$ by $\mu_{y,\theta}$, and replace $\rho(\Lambda)^{-d(\lambda)}$ by $e^{-\theta y(\lambda)} \rho(B(y,\theta))^{-d(\lambda)}$ wherever the former appears in the proof.  The key idea of this proof is that $\{ M(Z(v)): v \in \Lambda^0\}$ is an eigenvector for each matrix $A_i$, with eigenvalue $\rho(\Lambda)^{e_i}$.  In our case, Proposition \ref{prop:unique-measure} guarantees that $\{ \mu_{y,\theta}(Z(v)): v\in \Lambda^0\}$ is an eigenvector for each matrix $B_i(y,\theta)$ with eigenvalue $ \rho(B_i(y,\theta))$.
Consequently, Proposition 8.2 of \cite{aHLRS} also holds for the measures $\mu_{y,\theta}$, so we conclude that
\begin{equation}
\label{eq:periodic-paths-dense}
\mu_{y,\theta}\left( \{ x \in \Lambda^\infty \mid \sigma^m(x) = \sigma^n(x)\}\right) = \begin{cases}
1, & m-n \in \text{Per} \Lambda \\
0, & \text{ else.}
\end{cases}
\end{equation}
\end{rmk}

\section{KMS states associated to $(y,\theta)$}
\label{sec:kms}

Throughout this section $\Lambda$ will be a finite, strongly connected $k$-graph.

Each $\R_+$-functor $y$ gives us a family of generalized gauge actions $\{\alpha^{y,\theta}\}_{\theta > 0}$ on $C^*(\Lambda)$ (denoted $\overline{\alpha}^{\overline y}$ in Remark 5.25 of \cite{McNamara}):  For $t \in \R$ and any generator $s_\lambda$ of $C^*(\Lambda)$,
\begin{equation}\alpha^{y,\theta}_t(s_\lambda) := e^{i t y(\lambda)}\left( \rho(B(y,\theta))^{d(\lambda)} \right)^{it/\theta} s_\lambda = e^{it(y(\lambda) + \frac{1}{\theta} \ln(\rho(B(y,\theta))^{d(\lambda)}))} s_\lambda.
\label{eq:action}
\end{equation}
In general, these actions are not the same as the actions $\alpha^r$  considered in \cite{aHLRS-coord-irred} and \cite{aHLRS}. 
 As was remarked already in \cite{McNamara},  $\alpha^{y,\theta}$ differs from $\alpha^{r}$ as soon as $y$ is not of the form $y(\lambda)=r\cdot d(\lambda)$ for some fixed $r\in(0,\infty)^k$ and all $\lambda\in \Lambda$.  Section \ref{sec:examples} describes a variety of examples of $\R_+$-functors $y$ which are not of the form $y(\lambda) = r \cdot d(\lambda)$.

In this section, we will compute the KMS$_\beta$ states associated to the actions $\alpha^{y,\theta}$.
Proposition \ref{prop:KMS-s-states} establishes that  KMS$_{\beta} $ states for $\alpha^{y,\theta}$ exist precisely when $\alpha^{y,\theta} = \alpha^{y,\beta }$.  The KMS$_\beta$ states for $\alpha^{y,\theta}$ are therefore described in Theorem \ref{thm:KMS-simplex} below. A similar reduction in computational complexity occurs for the actions $\alpha^r$ mentioned above.  Indeed, as explained  at the beginning of Section 7 of \cite{aHLRS},   any KMS$_\beta$ state associated to an action of the form $\alpha^r$ must also be a KMS$_1$ state for $\alpha^{\ln(\rho(\Lambda))}$.

In order to compute the KMS states for a $C^*$-dynamical system $(A, \gamma)$, one often first identifies a good dense subset of $A$ on which $\gamma$ is analytic (it is well-known that the $\gamma$-analytic elements are dense in $A$, see for example \cite{bra-robII}, but for Cuntz-Krieger type algebras there usually is a good spanning set for $A$ inside the analytic elements).  In our setting, this dense subset is $\text{span} \{ s_\lambda s_\nu^*: \lambda, \nu \in \Lambda\}$.
Indeed, as observed in \cite[page 269]{aHLRS-coord-irred} for the case of  the gauge action, any element of $C^*(\Lambda)$ of the form $s_\lambda s_\nu^*$ is  $\alpha^{y,\theta}$-analytic for every $\R_+$-functor $y$ and all $\theta> 0$.  To see this, note that the function $t\mapsto \alpha_t^{y,\theta}(s_\lambda s_\nu^*)$ on $\R$ admits the extension
\[
\zeta \mapsto e^{i\zeta\left(y(\lambda)-y(\nu))+\ln (\rho(B(y,\theta))^{(d(\lambda)-d(\nu))/\theta}\right)} s_\lambda s_\nu^*,\]
which is an entire function.  The fact that $\text{span} \{ s_\lambda s_\nu^*: \lambda, \nu \in \Lambda\}$ is dense in $C^*(\Lambda)$ thus implies that the KMS$_{\beta}$ states for $\alpha^{y,\theta}$ are precisely the norm-one positive linear functionals $\phi: C^*(\Lambda) \to \C$ such that
for any $(\lambda, \nu), (\rho, \eta) $ with $s(\lambda) = s(\nu)$ and $s(\rho) = s(\eta)$,
\begin{equation}\label{eq:KMS condition on spanning elements} \phi(s_\lambda s_\nu^* s_\rho s_\eta^*) = \phi(s_\rho s_\eta^* \alpha_{i\beta}^{y,\theta}( s_\lambda s_\nu^*)).
\end{equation}

{We next observe that in the groupoid picture of $k$-graph algebras, these actions $\alpha^{y,\theta}$ arise from a cocycle $c_{y,\theta}$ on the $k$-graph groupoid $\G_\Lambda$ as in \cite{neshveyev} or \cite{thomsen};
\begin{equation} c_{y,\theta}(\lambda z, d(\lambda) - d(\nu), \nu z) = y(\lambda) - y(\nu) + \log\left( \rho\left( B(y, \theta) \right)^{(d(\lambda) - d(\nu))/\theta}\right).
\label{eq:cocycle-y-beta}
\end{equation}

\begin{prop}
The function $c_{y, \theta}: \G_\Lambda \to \R$ is well-defined and satisfies $c_{y,\theta}(gh) = c_{y,\theta}(g) + c_{y,\theta}(h).$
\label{prop:cocycle-well-def}
\end{prop}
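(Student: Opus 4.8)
The plan is to handle well-definedness and the cocycle identity separately, with essentially all of the work going into the former. Every element of $\G_\Lambda$ can be put in the form appearing in \eqref{eq:cocycle-y-beta}: given $g = (x, n, x') \in \G_\Lambda$, I would choose $j, \ell \in \N^k$ with $\sigma^j(x) = \sigma^\ell(x') =: z$ and $n = j - \ell$, and set $\lambda = x(0,j)$, $\nu = x'(0, \ell)$, so that $x = \lambda z$, $x' = \nu z$ and $s(\lambda) = r(z) = s(\nu)$. The only freedom here is the choice of the pair $(j,\ell)$, so well-definedness reduces to showing that the right-hand side of \eqref{eq:cocycle-y-beta} is independent of that choice. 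The basic invariance I would record first is that replacing $(j,\ell)$ by $(j+p, \ell+p)$ for $p \in \N^k$ replaces $(\lambda, \nu)$ by $(\lambda\eta, \nu\eta)$ with $\eta = z(0,p)$: since $y$ is additive, $y(\lambda\eta) - y(\nu\eta) = y(\lambda) - y(\nu)$, and since $d$ is a functor, $d(\lambda\eta) - d(\nu\eta) = d(\lambda) - d(\nu)$, so neither term in \eqref{eq:cocycle-y-beta} changes.

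To finish well-definedness I would show that any two admissible pairs $(j,\ell)$ and $(j',\ell')$ — satisfying $\sigma^j(x) = \sigma^\ell(x')$, $\sigma^{j'}(x) = \sigma^{\ell'}(x')$, and $j - \ell = j' - \ell' = n$ — have a common extension of the above kind. Taking $p = j \vee j'$ and using the shift identity $\sigma^{a+b} = \sigma^a \circ \sigma^b$, one verifies that $(p, p-n)$ is obtained from $(j,\ell)$ by adding $p - j \geq 0$ and from $(j',\ell')$ by adding $p - j' \geq 0$; here the constraint $j - \ell = j' - \ell' = n$ is exactly what forces $\ell + (p-j) = \ell' + (p-j') = p - n$, so the two extensions land on the same pair. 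Combined with the invariance from the previous paragraph, this shows both representations assign $g$ the same value, so $c_{y,\theta}$ is well-defined.

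For the cocycle identity I would exploit this same extension freedom to align representations. Let $g = (x, n, x')$ and $h = (x', m, x'')$ be composable, with product $gh = (x, n+m, x'')$. Picking pairs realizing $g$ and $h$, let $q$ be the coordinatewise maximum of the two exponents applied to the shared path $x'$, and extend both pairs up to $q$ on the $x'$-side; this yields $g = (\lambda z, d(\lambda) - d(\nu), \nu z)$ and $h = (\nu z, d(\nu) - d(\beta), \beta z)$ with the \emph{same} middle word $\nu = x'(0,q)$ and tail $z = \sigma^q(x')$, and with $s(\lambda) = s(\nu) = s(\beta) = r(z)$. Then $gh = (\lambda z, d(\lambda) - d(\beta), \beta z)$, and $c_{y,\theta}(gh) = c_{y,\theta}(g) + c_{y,\theta}(h)$ follows from two cancellations: the $y(\nu)$ terms cancel by additivity of $y$, while the logarithmic terms add because $\log\big(\rho(B(y,\theta))^{r/\theta}\big) = \tfrac1\theta \sum_{i=1}^k r_i \log \rho(B_i(y,\theta))$ is linear in $r \in \Z^k$, and $(d(\lambda) - d(\nu)) + (d(\nu) - d(\beta)) = d(\lambda) - d(\beta)$.

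The only genuinely delicate step is the reduction underlying well-definedness, namely checking that two representations of the same groupoid element always admit a common extension and that the formula is insensitive to extensions. Once that is in hand, both well-definedness and the cocycle computation are routine consequences of the additivity of $y$ and the linearity of $r \mapsto \log\big(\rho(B(y,\theta))^{r}\big)$.
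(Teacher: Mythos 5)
Your proof is correct and takes essentially the same route as the paper: both reduce well-definedness to the invariance of $y(\lambda)-y(\nu)$ under passage to a common extension of two representations, constructed via coordinatewise maxima (your pair $(p,p-n)$ with $p = j \vee j'$ is exactly the paper's extension by $(m_\lambda)_i = \max\{0,-n_i\}$, $(m_{\lambda'})_i = \max\{0,n_i\}$), and both prove multiplicativity by aligning the middle path so the $y(\nu)$ terms cancel. The only difference is cosmetic: where you build the aligned representations $g=(\lambda z,\cdot,\nu z)$, $h=(\nu z,\cdot,\beta z)$ directly via the refinement $q$, the paper cites \cite[Lemma 6.3]{kps-twisted} for this alignment.
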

\begin{proof}
Suppose that $g = (\lambda z, d(\lambda) - d(\nu), \nu z) = (\lambda' z', d(\lambda') - d(\nu'), \nu'z')$.
To show that $c_{y,\theta}$ is well defined, it suffices to show that $y(\lambda) - y(\nu) = y(\lambda') - y(\nu')$.  To that end,
write $d(\lambda) - d(\lambda') = (n_1, \ldots, n_k) \in \Z^k$ and, for each $1 \leq i \leq k$, define
\[(m_\lambda)_i = \max\{ 0, -n_i\}; \qquad (m_{\lambda'})_i = \max\{ 0, n_i\}.\]
Then [defining $m_\lambda \in \N^k$ by $m_\lambda = ((m_\lambda)_i)_{i=1}^k$] we have $d(\lambda) + m_\lambda = d(\lambda') + m_{\lambda'}$ and, since $d(\nu) - d(\nu') = d(\lambda) - d(\lambda') = (n_1, \ldots, n_k)$, we also have  $d(\nu) + m_\lambda = d(\nu') + m_{\lambda'}.$

Write $\lambda_z = z(0, m_\lambda)$ and $\lambda'_{z'} = z'(0, m_{\lambda'})$.  By construction,
\[ \lambda \lambda_z = \lambda' \lambda'_{z'} \quad \text{ and } \quad  \nu \lambda_z = \nu' \lambda'_{z'}.\]
The additivity of $y$ now implies that, as desired,
\[ y(\lambda) - y(\nu) = y(\lambda \lambda_z) - y(\nu \lambda_z) = y(\lambda' \lambda'_{z'}) - y(\nu' \lambda'_{z'}) = y(\lambda') - y(\nu'). \]

Showing that $c_{y,\theta}$ is multiplicative uses a similar argument.  Given $g, h \in \G_\Lambda$ with $s(g) = r(h)$, we can choose  $ \lambda_g, \nu_g = \lambda_h, \nu_h \in \Lambda$ such that
\[ g = (\lambda_gz, d(\lambda_g) - d(\nu_g), \nu_g z), \quad h = (\lambda_h z, d(\lambda_h) - d(\nu_h), \nu_h z),\]
\[  gh = (\lambda_{g} z, d(\lambda_{g}) - d(\nu_{h}), \nu_{h}z);\]
cf.~\cite[Lemma 6.3]{kps-twisted}.  To check that $c_{y,\theta}$ is multiplicative, 
it now suffices to observe that
\[y(\lambda_g) - y(\nu_g) + y(\lambda_h) - y(\nu_h) = y(\lambda_g) - y(\nu_h). \qedhere\]
\end{proof}

The following definition is an application of the definition of quasi-invariance from \cite{neshveyev} to our setting, taking for our sets $U$ the basic open sets $Z(\lambda,\nu) = \{ (\lambda z, d(\lambda) - d(\nu), \nu z): z \in \Lambda^\infty\}$ of $\G_\Lambda$.  We have also invoked the fact that $c_{y,\theta}$ is constant on the sets $Z(\lambda, \nu)$.

\begin{defn}
\label{def:quasi-invariant}
Let $y$ be a $\R_+$-functor and $\theta,\beta \in (0,\infty)$. We say that
a measure $\mu$ on $\Lambda^\infty$ is {\em quasi-invariant} with Radon-Nikodym cocycle $e^{-\beta c_{y,\theta}}$ if, for any $(\lambda, \nu) \in \Lambda\times \Lambda$ with $s(\lambda) = s(\nu)$ and all $z \in \Lambda^\infty$,
\[ e^{-\beta c_{y,\theta}(\lambda z, d(\lambda) - d(\nu), \nu z)} \mu (Z(\nu)) = \mu(Z(\lambda)).\]
Equivalently, $\mu$ is quasi-invariant with Radon-Nikodym cocycle $e^{-\beta c_{y,\theta}}$ iff
\[ e^{\beta y(\nu)} \rho(B(y,\theta))^{\frac{\beta} {\theta}d(\nu)} \mu(Z(\nu)) =  e^{\beta y(\lambda)} \rho(B(y,\theta))^{\frac{\beta} {\theta}d(\lambda)} \mu(Z(\lambda))\]
whenever $s(\nu) = s(\lambda)$.
\end{defn}
\begin{rmk}
\label{rmk:quasi-invar}
\begin{enumerate}
\item If $\beta  = \theta$, the measure $\mu_{y,\beta}$ is quasi-invariant with Radon-Nikodym cocycle $e^{-\theta c_{y,\theta}}$.
In fact, Proposition \ref{prop:unique-measure} establishes that $\mu_{y,\theta}$ is the unique such measure.
\item
It is relatively straightforward to check that $\mu_{y,\theta}$ is ($\G_\Lambda, c_{y,\theta})$ conformal in the sense of \cite{thomsen}.
 Consequently, Proposition \ref{prop:integration-KMS} below could also be derived from  \cite[Proposition 2.1]{thomsen}.
\end{enumerate}
\end{rmk}

\begin{prop}
\label{prop:integration-KMS}
Let $\Lambda$ be a finite, strongly connected $k$-graph, equipped with a $\R_+$-functor $y$, and choose $\beta , \theta \in (0, \infty)$.
Let $\Psi$ denote the canonical conditional expectation $\Psi: C^*(\Lambda) \to C(\Lambda^\infty)$.
 For any quasi-invariant probability measure $\mu$ with Radon-Nikodym cocycle $e^{-\beta  c_{y,\theta}}$, the function
 \[ \psi_{\beta} (a) := \int_{\Lambda^\infty} \Psi(a) \, d\mu, \]
for $a \in C^*(\Lambda)$, defines a KMS$_{\beta} $ state for $(C^*(\Lambda), \alpha^{y,\theta})$.
 \end{prop}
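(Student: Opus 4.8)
The plan is to verify directly that $\psi_\beta$ is a KMS$_\beta$ state, combining the multiplication rules in $C^*(\Lambda)$ with the quasi-invariance of $\mu$. That $\psi_\beta$ is a state is essentially free: $\Psi$ is a unital positive conditional expectation and $\mu$ is a probability measure, so $\psi_\beta$ is positive and $\psi_\beta(1) = \int_{\Lambda^\infty}\Psi(1)\,d\mu = \mu(\Lambda^\infty) = 1$. Since $\mathrm{span}\{s_\lambda s_\nu^*\}$ is dense in $C^*(\Lambda)$ and consists of $\alpha^{y,\theta}$-analytic elements, it then suffices to verify the identity \eqref{eq:KMS condition on spanning elements} for spanning elements $s_\lambda s_\nu^*$ and $s_\rho s_\eta^*$.

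Next I would reduce both sides of \eqref{eq:KMS condition on spanning elements} to weighted sums of cylinder measures. The basic input is $\psi_\beta(s_\alpha s_\gamma^*) = \delta_{\alpha,\gamma}\,\mu(Z(\alpha))$, which follows from $\Psi(s_\alpha s_\gamma^*) = \delta_{\alpha,\gamma}\, s_\alpha s_\alpha^*$ and $\int_{\Lambda^\infty} s_\alpha s_\alpha^*\,d\mu = \mu(Z(\alpha))$. Expanding $s_\nu^* s_\rho$ and $s_\eta^* s_\lambda$ through \eqref{eq:s-alpha s-beta} gives
\[ s_\lambda s_\nu^* s_\rho s_\eta^* = \sum_{(\xi,\zeta)\in\Lambda^{min}(\nu,\rho)} s_{\lambda\xi} s_{\eta\zeta}^*, \qquad s_\rho s_\eta^* s_\lambda s_\nu^* = \sum_{(\xi',\zeta')\in\Lambda^{min}(\eta,\lambda)} s_{\rho\xi'} s_{\nu\zeta'}^*. \]
Writing $C = e^{-\beta(y(\lambda)-y(\nu))}\rho(B(y,\theta))^{-\frac{\beta}{\theta}(d(\lambda)-d(\nu))}$ for the scalar with $\alpha^{y,\theta}_{i\beta}(s_\lambda s_\nu^*) = C\,s_\lambda s_\nu^*$, applying $\psi_\beta$ collapses each side onto the diagonal:
\[ \text{LHS} = \sum_{\substack{(\xi,\zeta)\in\Lambda^{min}(\nu,\rho) \\ \lambda\xi = \eta\zeta}} \mu(Z(\lambda\xi)), \qquad \text{RHS} = C \sum_{\substack{(\xi',\zeta')\in\Lambda^{min}(\eta,\lambda) \\ \rho\xi' = \nu\zeta'}} \mu(Z(\rho\xi')). \]

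The crux of the argument, which I expect to be the main obstacle, is to match these two sums term by term. I would first observe that both index sets are empty unless the degrees balance, $d(\lambda) - d(\nu) = d(\eta) - d(\rho)$ (this is forced by $\lambda\xi = \eta\zeta$ together with the degree constraint built into $\Lambda^{min}(\nu,\rho)$), in which case both sides vanish; so I may assume it holds. A short computation with coordinatewise maxima then shows that under this balance the degrees of the $\Lambda^{min}(\nu,\rho)$-legs and the $\Lambda^{min}(\eta,\lambda)$-legs agree after interchanging the two coordinates, so that $(\xi,\zeta)\mapsto(\zeta,\xi)$ is a well-defined bijection from the LHS index set onto the RHS index set. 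Under it $\rho\xi' = \rho\zeta = \nu\xi$ (using $\nu\xi = \rho\zeta$ from $\Lambda^{min}(\nu,\rho)$), so term-by-term equality reduces to the single identity $\mu(Z(\lambda\xi)) = C\,\mu(Z(\nu\xi))$. This is precisely the quasi-invariance of $\mu$ from Definition~\ref{def:quasi-invariant} applied to the pair $(\lambda\xi, \nu\xi)$, whose sources coincide; the additivity of both $y$ and $d$ cancels the common tail $\xi$ and leaves exactly the weight $C$. Summing over the bijection yields LHS $=$ RHS, which is \eqref{eq:KMS condition on spanning elements}, completing the proof.
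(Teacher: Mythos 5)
Your proposal is correct and follows essentially the same route as the paper's proof: expand both sides via Equation \eqref{eq:s-alpha s-beta}, let $\Psi$ collapse everything onto diagonal terms, match the two index sets, and close with quasi-invariance applied to pairs $(\lambda\xi,\nu\xi)$ with common source. The only (cosmetic) difference is bookkeeping: you match the index sets by the explicit swap bijection $(\xi,\zeta)\mapsto(\zeta,\xi)$ justified by the degree-balance identity, whereas the paper labels both expansions so that the two sets $S_1$ and $S_2$ are literally equal, proved by comparing degrees of minimal extensions.
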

 \begin{proof}
Since $\mu$ is a probability measure, $\psi_{\beta} $ is easily verified to be a positive linear functional of norm 1.
Since the elements $s_\lambda s_\nu^*$ in a dense spanning family for $C^*(\Lambda)$ are $\alpha^{y,\theta}$-analytic, it suffices to verify condition \eqref{eq:KMS condition on spanning elements}.

One easily sees that
\[\alpha^{y,\theta}_{i\beta }(s_\lambda s_\nu^*) = e^{-\beta (y(\lambda) - y(\nu))} \rho(B(y, \theta))^{\frac{\beta} {\theta} (d(\nu) - d(\lambda))} s_\lambda s_\nu^*,\]
  so, using Equation \eqref{eq:s-alpha s-beta},
$\psi_{\beta} (s_\gamma s_\zeta^* \alpha^{y,\theta}_{i\beta }(s_\lambda s_\nu^*))$ becomes
\begin{align*}  { }& e^{-\beta (y(\lambda) - y(\nu))} \rho(B(y, \theta))^{\frac{\beta} {\theta}d(\nu) - d(\lambda)}  \sum_{(\rho_1, \rho_2) \in \Lambda^{min}(\zeta, \lambda)} \psi_{\beta} (s_{\gamma \rho_1} s_{\nu \rho_2}^*) \\
&= e^{-\beta (y(\lambda) - y(\nu))} \rho(B(y, \theta))^{\frac{\beta} {\theta}d(\nu) - d(\lambda)}  \sum_{(\rho_1, \rho_2) \in S_1} \mu(Z(\nu \rho_2)),
\end{align*}
where $S_1 = \{(\rho_1, \rho_2) \in \Lambda^{min}(\zeta, \lambda): \gamma \rho_1 = \nu \rho_2\}$.

Similarly, setting $S_2 = \{(\eta_1, \eta_2) \in \Lambda^{min}(\gamma, \nu): \lambda \eta_2 = \zeta \eta_1\}$,
\begin{align*}
\psi_{\beta} (s_\lambda s_\nu^* s_\gamma s_\zeta^*)&= \sum_{(\eta_1, \eta_2) \in \Lambda^{min}(\gamma, \nu)} \psi_{\beta} (s_{\lambda \eta_2} s_{\zeta \eta_1}^*) \\
&= \sum_{(\eta_1, \eta_2) \in S_2} \mu(Z(\lambda \eta_2)).
\end{align*}

Note that, since $(\eta_1, \eta_2) $ is an extension of $(\zeta, \lambda)$ for any $(\eta_1, \eta_2) \in S_2$, we must have
\[ d(\eta_i) \geq d(\rho_i) \text{ for  } i = 1, 2\]
for any  $(\rho_1, \rho_2) \in \Lambda^{min}(\zeta, \lambda)$.
Similarly, since $(\rho_1, \rho_2)$ is an extension of $(\gamma, \nu)$ for any $(\rho_1, \rho_2) \in S_1$, we must have $d(\rho_i) \geq d(\eta_i)$ for each $i$.  Hence $d(\rho_i) = d(\eta_i)$ for all $(\rho_1, \rho_2) \in S_1, (\eta_1, \eta_2) \in S_2$.  In other words, $(\rho_1, \rho_2)$ is a minimal extension of $(\gamma, \nu)$ and $(\eta_1, \eta_2)$ is a minimal extension of $(\zeta, \lambda)$, for all $(\rho_1, \rho_2) \in S_1, (\eta_1, \eta_2) \in S_2$.  It follows that
\[ S_1 = \Lambda^{min}(\zeta, \lambda) \cap \Lambda^{min}(\gamma, \nu)  =S_2 .\]

Now, we  see that $\psi_{\beta} (s_\gamma s_\zeta^* \alpha^{y,\theta}_{i\beta }(s_\lambda s_\nu^*))$ transforms as
\begin{align*}
 { }& e^{-\beta (y(\lambda) - y(\nu))} \rho(B(y, \theta))^{\frac{\beta} {\theta}(d(\nu) - d(\lambda))}  \sum_{(\rho_1, \rho_2) \in S_1} e^{-\beta  y(\nu)} \rho(B(y,\theta))^{-\frac{\beta} {\theta}d(\nu)}  \mu(Z(\rho_2)) \\
&= e^{-\beta  y(\lambda)} \rho(B(y,\theta))^{-\frac{\beta} {\theta}d(\lambda)}  \sum_{(\eta_1, \eta_2) \in S_2 = S_1} \mu (Z(\eta_2))\\
&= \sum_{(\eta_1, \eta_2) \in S_2} \mu(Z(\lambda \eta_2) )= \psi_{\beta} (s_\lambda s_\nu^* s_\gamma s_\zeta^*),
\end{align*}
which is \eqref{eq:KMS condition on spanning elements}. Hence $\psi_{\beta} $ is a KMS$_{\beta} $ state as claimed.
 \end{proof}

\begin{prop}
\label{prop:KMS-s-states}
Fix a finite, strongly connected $k$-graph $\Lambda$, a $\R_+$-functor $y$ on $\Lambda$, and $\beta , \theta \in (0,\infty)$. There exist  KMS$_{\beta }$ states for $(C^*(\Lambda), \alpha^{y,\theta})$ iff $\alpha^{y,\theta} = \alpha^{y,\beta }$.
\end{prop}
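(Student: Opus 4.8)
The plan is to observe first that the condition $\alpha^{y,\theta} = \alpha^{y,\beta}$ is purely a statement about spectral radii. Comparing the defining formula \eqref{eq:action} for the two actions and cancelling the common term $y(\lambda)$, one sees that $\alpha^{y,\theta} = \alpha^{y,\beta}$ holds iff $\frac{1}{\theta}\ln \rho(B(y,\theta))^{d(\lambda)} = \frac{1}{\beta}\ln \rho(B(y,\beta))^{d(\lambda)}$ for every $\lambda \in \Lambda$. Since $\Lambda$ is strongly connected it has edges of every colour, so $d(\Lambda)$ contains each $e_i$, and this is equivalent to the family of scalar identities $\rho(B_i(y,\theta))^{\beta/\theta} = \rho(B_i(y,\beta))$ for $1 \le i \le k$. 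I will prove both implications by relating the existence of a KMS$_\beta$ state to exactly these identities.

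For the forward (``only if'') direction, let $\phi$ be a KMS$_\beta$ state and set $m(v) := \phi(s_v) \ge 0$ for $v \in \Lambda^0$; since $\sum_v s_v = 1$ we have $\sum_v m(v) = 1$, so $m \ne 0$. Applying the KMS condition to the analytic pair $a = s_\lambda$, $b = s_\lambda^*$ and using $\alpha^{y,\theta}_{i\beta}(s_\lambda) = e^{-\beta y(\lambda)}\rho(B(y,\theta))^{-\frac{\beta}{\theta}d(\lambda)} s_\lambda$ together with (CK3) gives $\phi(s_\lambda s_\lambda^*) = e^{-\beta y(\lambda)}\rho(B(y,\theta))^{-\frac{\beta}{\theta}d(\lambda)} m(s(\lambda))$. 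Summing this over $\lambda \in v\Lambda^{e_i}$ and applying the expectation of (CK4), the key point is that the weights $e^{-\beta y(\lambda)}$ reassemble into the matrix $B_i(y,\beta)$ at parameter $\beta$, whereas the normalisation $\rho(B_i(y,\theta))^{-\beta/\theta}$ carries the parameter $\theta$; this yields $B_i(y,\beta)\, m = \rho(B_i(y,\theta))^{\beta/\theta}\, m$ for every $i$. Thus $m$ is a nonnegative common eigenvector of the irreducible family $\{B_i(y,\beta)\}$ (Lemma~\ref{lem:B irred family}); since $m$ is then an eigenvector of the positive matrix $B(y,\beta)^F$ with positive eigenvalue, it is in fact strictly positive, so \cite[Proposition 3.1]{aHLRS} forces $m = \xi^{y,\beta}$ and, matching eigenvalues, $\rho(B_i(y,\beta)) = \rho(B_i(y,\theta))^{\beta/\theta}$ for all $i$. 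By the first paragraph this is exactly $\alpha^{y,\theta} = \alpha^{y,\beta}$.

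For the reverse (``if'') direction I would exhibit a concrete KMS$_\beta$ state. Assuming the identities $\rho(B_i(y,\theta))^{\beta/\theta} = \rho(B_i(y,\beta))$, I claim $\mu_{y,\beta}$ is quasi-invariant with Radon-Nikodym cocycle $e^{-\beta c_{y,\theta}}$ in the sense of Definition~\ref{def:quasi-invariant}; then Proposition~\ref{prop:integration-KMS} produces a KMS$_\beta$ state for $(C^*(\Lambda), \alpha^{y,\theta})$ by integration against $\mu_{y,\beta}$. To verify quasi-invariance, substitute the formula \eqref{eq:mu-y-beta} for $\mu_{y,\beta}(Z(\lambda))$: the identities upgrade to $\rho(B(y,\theta))^{\frac{\beta}{\theta}d(\lambda)} = \rho(B(y,\beta))^{d(\lambda)}$, so that $e^{\beta y(\lambda)}\rho(B(y,\theta))^{\frac{\beta}{\theta}d(\lambda)}\mu_{y,\beta}(Z(\lambda)) = \xi^{y,\beta}_{s(\lambda)}$ depends only on $s(\lambda)$, which is precisely the required condition.

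I expect the main obstacle to be the forward direction, and within it the interplay of the two parameters $\theta$ and $\beta$: one must recognise that the eigenvector equation extracted from the KMS condition involves $B_i$ at parameter $\beta$ while its eigenvalue is governed by $\theta$, and then use uniqueness of the Perron-Frobenius eigenvector to collapse this mismatch into the scalar identities. Once strict positivity of $m$ is secured via the positive matrix $B(y,\beta)^F$, the uniqueness statement \cite[Proposition 3.1]{aHLRS} does the remaining work; the reverse direction is then essentially bookkeeping built on Proposition~\ref{prop:integration-KMS}.
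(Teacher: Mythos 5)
Your proof is correct and follows essentially the same route as the paper: the forward direction extracts from the KMS condition on the vertex projections that $(\phi(s_v))_v$ is a normalized common eigenvector of the $B_i(y,\beta)$ with eigenvalues $\rho(B_i(y,\theta))^{\beta/\theta}$, and then invokes \cite[Proposition 3.1]{aHLRS} to force $\rho(B_i(y,\theta))^{1/\theta} = \rho(B_i(y,\beta))^{1/\beta}$, exactly as in the paper, while the reverse direction rests on Proposition \ref{prop:integration-KMS} applied to $\mu_{y,\beta}$, which is also the paper's mechanism (via Remark \ref{rmk:quasi-invar}). Your only additions are harmless bookkeeping the paper leaves implicit: spelling out that equality of actions is equivalent to the spectral-radius identities, and verifying quasi-invariance of $\mu_{y,\beta}$ for the cocycle $e^{-\beta c_{y,\theta}}$ directly instead of noting that equal actions have equal KMS state sets.
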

\begin{proof}
Choose  a KMS$_{\beta }$ state $\phi$ for $(C^*(\Lambda), \alpha^{y,\theta})$.
By the Cuntz-Krieger relations and the KMS condition, for any $1 \leq i \leq k$ and $v \in \Lambda^0$,
\begin{align*}
\phi(p_v) &= \sum_{\lambda \in v\Lambda^{e_i}} \phi(s_\lambda s_\lambda^*) = \sum_\lambda \phi(s_\lambda^* s_\lambda) e^{-\beta  y(\lambda)} \left( \rho(B(y,\theta))^{-d(\lambda)}\right)^{\beta /\theta} \\
&= \sum_{\lambda \in v\Lambda^{e_i}} \phi(p_{s(\lambda)})  e^{-\beta y(\lambda)} \rho(B_i(y,\theta))^{-\beta /\theta}\\
&= \rho(B_i(y,\theta))^{-\beta /\theta}\sum_{w\in \Lambda^0} \phi(p_w) \sum_{\lambda \in v\Lambda^{e_i} w} e^{-\beta y(\lambda)}.
\end{align*}
Since $\sum_{\lambda \in v\Lambda^{e_i}w} e^{-\beta  y(\lambda)} = B_i(y,\beta )_{v,w}$, we see that $(\phi(p_v))_{v\in \Lambda^0}$ is an eigenvector for  $B_i(y, \beta )$ with eigenvalue $ \rho(B_i(y,\theta))^{\beta /\theta}.$  Moreover,
\[ \sum_{v \in \Lambda^0} \phi(p_v) = \phi(1) = 1,\]
so \cite[Proposition 3.1(a)]{aHLRS} implies that $(\phi(p_v))_{v\in \Lambda^0} = \xi^{y,\beta } $ and
\[
 \rho(B_i(y,\theta))^{1/\theta} = \rho(B_i(y,\beta ))^{1/\beta } \ \  \forall \ i. \]
It follows that $\alpha^{y,\theta} = \alpha^{y,\beta }$.

Conversely, if $\alpha^{y,\beta } = \alpha^{y,\theta}$, then the KMS$_{\beta }$ states for the two actions are the same (and constitute a nontrivial set by Proposition \ref{prop:integration-KMS}).
\end{proof}
In the case when $\Lambda$ has only one vertex, we obtain a slightly sharper result.  Such higher-rank graphs have been extensively studied by Davidson, Power, and Yang (cf.~\cite{davidson-yang, davidson-yang-repns, davidson-power-yang-dilation}).

\begin{prop}
\label{prop:one-vertex-unique-beta}
Let $\Lambda$ be a finite  $k$-graph with one vertex.  Choose a $\R_+$-functor $y$ on $\Lambda$ and $\beta , \theta \in (0,\infty)$. If there exist KMS$_\beta$ states for $\alpha^{y,\theta}$, then $\beta = \theta$.
\end{prop}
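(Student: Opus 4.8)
The plan is to reduce the statement to a strict-monotonicity property of the spectral radii in the temperature variable. First I would invoke Proposition \ref{prop:KMS-s-states}: the existence of a KMS$_\beta$ state for $(C^*(\Lambda),\alpha^{y,\theta})$ forces $\alpha^{y,\theta}=\alpha^{y,\beta}$. Reading off the coefficient of $it$ in \eqref{eq:action} (two one-parameter groups $t\mapsto e^{itc_1}$ and $t\mapsto e^{itc_2}$ agree for all $t\in\R$ iff $c_1=c_2$) and specializing to a generator $s_\lambda$ with $d(\lambda)=e_i$, which exists since a one-vertex strongly connected $k$-graph has $\Lambda^{e_i}\neq\emptyset$, yields
\[ \rho(B_i(y,\theta))^{1/\theta}=\rho(B_i(y,\beta))^{1/\beta}\qquad\text{for each }1\le i\le k,\]
exactly as already recorded in the proof of Proposition \ref{prop:KMS-s-states}. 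It therefore remains to show that this system of equations forces $\beta=\theta$.

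Here I would exploit the one-vertex hypothesis: since $|\Lambda^0|=1$, each $B_i(y,s)$ is a $1\times 1$ matrix, so its spectral radius is its single entry, $\rho(B_i(y,s))=\sum_{\lambda\in\Lambda^{e_i}}e^{-s\,y(\lambda)}$. Fix a color $i$ with $n_i:=|\Lambda^{e_i}|\ge 2$ and set $g_i(s):=\rho(B_i(y,s))^{1/s}$. Writing $b_\lambda:=e^{-y(\lambda)}>0$, I recognize $g_i(s)=\bigl(\sum_{\lambda}b_\lambda^{\,s}\bigr)^{1/s}$ as the $\ell^s$-norm of the strictly positive vector $(b_\lambda)_{\lambda\in\Lambda^{e_i}}$. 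The crux is to show that $g_i$ is strictly decreasing on $(0,\infty)$, hence injective, which I would do by the short computation
\[ \frac{d}{ds}\log g_i(s)=-\frac{H(p_s)}{s^2},\qquad p_s(\lambda):=\frac{e^{-s\,y(\lambda)}}{\sum_{\mu}e^{-s\,y(\mu)}},\]
where $H$ denotes the Shannon entropy of the probability vector $p_s$. Since $p_s$ is supported on the $n_i\ge 2$ edges, $H(p_s)>0$, so $\log g_i$ is strictly decreasing. (Equivalently, one may cite that $s\mapsto\|b\|_s$ is strictly decreasing for vectors with at least two nonzero coordinates.) Combining injectivity of $g_i$ with the equation $g_i(\theta)=g_i(\beta)$ from the first step gives $\theta=\beta$.

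The main obstacle is not the monotonicity itself but the strictness, and in particular locating a color that witnesses it: $g_i$ is constant precisely when $n_i=1$, so the argument needs at least one color $i$ with $n_i\ge 2$. This holds for every one-vertex $k$-graph except the case in which every $\Lambda^{e_i}$ is a singleton; there $\Lambda\cong\N^k$, the functor $y$ is necessarily linear (of the form $y(\lambda)=r\cdot d(\lambda)$), and substituting into \eqref{eq:action} shows that $\alpha^{y,\theta}$ degenerates to the trivial action for every $\theta$, so this case must be singled out and handled (or excluded) separately. Apart from this point, everything is routine: the reduction of the first paragraph is immediate from Proposition \ref{prop:KMS-s-states}, and the entropy identity is a one-line calculus exercise.
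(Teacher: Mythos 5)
Your proposal is correct, and its skeleton coincides with the paper's: both reduce, via Proposition \ref{prop:KMS-s-states}, to showing that $\psi_i(\theta):=\rho(B_i(y,\theta))^{1/\theta}$ is injective in $\theta$, and both obtain injectivity from strict monotonicity. Where you differ is in how strictness is proved, and your route is the more robust one. The paper differentiates directly and asserts
$\frac{d\psi_i}{d\theta}=\frac{1}{\theta}\bigl(\rho(B_i(y,\theta))\bigr)^{(1-\theta)/\theta}\sum_{h\in\Lambda^{e_i}}-y(h)e^{-\theta y(h)}<0$.
This computation treats the exponent $1/\theta$ as a constant: the correct logarithmic derivative (as computed in the proof of Proposition \ref{prop-partial-result-equality-thetas}) carries the additional term $-\ln(\rho(B_i(y,\theta)))/\theta^2$. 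Moreover, the claimed strict inequality fails outright when $y$ vanishes on $\Lambda^{e_i}$: there the displayed sum is zero, while in truth $\psi_i(\theta)=n_i^{1/\theta}$ with $n_i=|\Lambda^{e_i}|$, which is strictly decreasing precisely because of the omitted term, provided $n_i\ge 2$. Your identity $\frac{d}{ds}\log\|b\|_s=-H(p_s)/s^2$ covers all these cases uniformly, with no case distinction on whether $y$ vanishes, and it isolates exactly when strictness fails, namely $n_i=1$.

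Your flagged degenerate case is a genuine one, not an artifact of your method. If $|\Lambda^{e_i}|=1$ for every $i$, then $\Lambda\cong\N^k$, $\rho(B_i(y,\theta))=e^{-\theta y(f_i)}$, and the exponent $y(\lambda)+\frac{1}{\theta}\ln(\rho(B(y,\theta))^{d(\lambda)})$ in \eqref{eq:action} vanishes identically, so $\alpha^{y,\theta}$ is the trivial action for every $\theta$; the tracial states of $C^*(\Lambda)\cong C(\T^k)$ are then KMS$_\beta$ states for every $\beta>0$, so the proposition fails as literally stated (this is consistent with Proposition \ref{prop:KMS-s-states}, since $\alpha^{y,\theta}=\alpha^{y,\beta}$ trivially). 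The paper's proof passes over this silently, since its strict inequality degenerates to an equality there. So your caveat identifies a missing nondegeneracy hypothesis --- at least one color must have two or more edges --- and, granting it, your argument is complete and in fact repairs the calculus in the paper's own proof.
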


\begin{proof}
In the one-vertex case, each adjacency matrix $B_i(y,\theta)$  has only one (positive) entry, which is also its spectral radius:
		\[
		\rho(B_i(y,\theta))= \sum_{h\in \Lambda^{e_i}} e^{-\theta y(h)}.
		\]
Since the function $
		\theta \to \rho(B_i(y,\theta))$
		is differentiable by Theorem \ref{thm:spec-rad-cts}, the function
		\[
		\psi_i(\theta):=   \rho(B_i(y,\theta))^{1/\theta}= \Big(\sum_{h \in \Lambda^{e_i}} e^{-\theta y(h)}\Big)^{1/\theta}
		\]
		is also differentiable, and
\begin{align*}
\frac{d\psi_i}{d\theta}  & = \frac{1}{\theta} \left( \rho(B_i(y, \theta)) \right)^{\frac{1-\theta}{\theta}} \frac{d}{d\theta} \rho(B_i(y,\theta))= \frac{1}{\theta} \left( \rho(B_i(y, \theta)) \right)^{\frac{1-\theta}{\theta}}
		\sum_{h \in \Lambda^{e_i}} -y(h) e^{-\theta y(h)}\\
		& < 0.
\end{align*}
Consulting the formula \eqref{eq:action} for the action $\alpha^{y,\theta}$ reveals then that $\alpha^{y,\beta} \not= \alpha^{y,\theta}$ if $\beta \not= \theta \in \R_+$.  The result now follows from Proposition \ref{prop:KMS-s-states}.
\end{proof}

 It would be interesting to know if the conclusion of Proposition \ref{prop:one-vertex-unique-beta} is valid for $k$-graphs with more than one vertex.  Proposition \ref{prop-partial-result-equality-thetas} offers a partial result in this direction.  In his thesis, McNamara  identified a different set of hypotheses guaranteeing the uniqueness result of Proposition \ref{prop:one-vertex-unique-beta}; we discuss this result in   Remark~\ref{rmk:McNamaras uniqueness} below.

\begin{prop}
\label{prop-partial-result-equality-thetas}
Fix a finite, strongly connected $k$-graph $\Lambda$, a $\R_+$-functor $y$ on $\Lambda$,
and suppose that  for some interval $(a,b) \subset \R_+$
\[
\rho(B_i(y,\theta) ) >1, \ \forall\,  \theta \in (a,b), \ \forall \, 1 \leq i \leq k.
\]
For $\beta , \theta \in (a,b)$, there exist  KMS$_{\beta }$ states for $(C^*(\Lambda), \alpha^{y,\theta})$ iff $\beta = \theta$.
\end{prop}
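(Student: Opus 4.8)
The plan is to reduce the statement to a monotonicity property of the single-variable functions $\psi_i(\theta) := \rho(B_i(y,\theta))^{1/\theta}$, and then to exploit the hypothesis $\rho(B_i(y,\theta)) > 1$ to force strict monotonicity. By Proposition \ref{prop:KMS-s-states}, KMS$_\beta$ states for $(C^*(\Lambda), \alpha^{y,\theta})$ exist precisely when $\alpha^{y,\theta} = \alpha^{y,\beta}$. Consulting the formula \eqref{eq:action} — exactly as in the proofs of Propositions \ref{prop:KMS-s-states} and \ref{prop:one-vertex-unique-beta} — the phase attached to a generator $s_\lambda$ under $\alpha^{y,\theta}$ is $y(\lambda) + \sum_{i=1}^k d(\lambda)_i \log \psi_i(\theta)$, since $\frac{1}{\theta}\log\left(\rho(B(y,\theta))^{d(\lambda)}\right) = \sum_{i=1}^k d(\lambda)_i \log\psi_i(\theta)$. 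Comparing phases on single-colour edges (which exist for each $i$ because each $B_i(y,\theta)$ is nonzero, being part of an irreducible family by Lemma \ref{lem:B irred family}) shows that $\alpha^{y,\theta} = \alpha^{y,\beta}$ holds iff $\psi_i(\theta) = \psi_i(\beta)$ for every $i$. Thus it remains to prove that $\psi_i(\theta) = \psi_i(\beta)$ for all $i$ forces $\theta = \beta$ when $\beta, \theta \in (a,b)$, for which it suffices to show that a single $\psi_i$ is strictly monotone on $(a,b)$.

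Next I would study $\log\psi_i$. Writing $r_i(\theta) := \rho(B_i(y,\theta))$, Theorem \ref{thm:spec-rad-cts} guarantees that $r_i$ is smooth on $\R_+$, and since $\log\psi_i(\theta) = \frac{1}{\theta}\log r_i(\theta)$ we have
\[
\frac{d}{d\theta}\log\psi_i(\theta) = -\frac{1}{\theta^2}\log r_i(\theta) + \frac{1}{\theta}\frac{r_i'(\theta)}{r_i(\theta)}.
\]
The hypothesis $r_i(\theta) > 1$ on $(a,b)$ makes the first summand strictly negative there. For the second summand I would show $r_i'(\theta) \le 0$: since $y(\lambda) \ge 0$, each entry $B_i(y,\theta)_{v,w} = \sum_{\lambda \in v\Lambda^{e_i}w} e^{-\theta y(\lambda)}$ is a non-increasing function of $\theta$, whence $B_i(y,\theta_1) \ge B_i(y,\theta_2)$ entrywise for $\theta_1 < \theta_2$. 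By the classical monotonicity of the spectral radius of nonnegative matrices under entrywise domination, $r_i$ is non-increasing, so $r_i'(\theta) \le 0$. (Alternatively one may use the first-order perturbation formula $r_i'(\theta) = \eta^{\top}\, \frac{d}{d\theta}B_i(y,\theta)\, \xi^{y,\theta}$, with $\eta$ and $\xi^{y,\theta}$ the positive left and right Perron eigenvectors, together with $\frac{d}{d\theta}B_i(y,\theta) \le 0$ entrywise.) Since $\theta > 0$ and $r_i(\theta) > 0$, the second summand is $\le 0$, and hence $\frac{d}{d\theta}\log\psi_i < 0$ throughout $(a,b)$.

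Consequently each $\psi_i$ is strictly decreasing, hence injective, on $(a,b)$; in particular $\psi_i(\theta) = \psi_i(\beta)$ with $\beta, \theta \in (a,b)$ forces $\theta = \beta$, which combined with the first paragraph gives the nontrivial implication. The reverse implication is immediate, since $\beta = \theta$ yields KMS$_\beta$ states by Proposition \ref{prop:integration-KMS}. I expect the only genuine subtlety to be the sign of $r_i'$: it is the monotonicity of $\rho$ under the entrywise perturbation $\theta \mapsto B_i(y,\theta)$ — which rests on $y \ge 0$ — that lets the $-\frac{1}{\theta^2}\log r_i$ term dominate. The hypothesis $\rho(B_i(y,\theta)) > 1$ is what activates that term and cannot be dropped: for the zero functor $y \equiv 0$ one has $r_i \equiv \rho(A_i)$ constant and $\psi_i(\theta) = \rho(A_i)^{1/\theta}$, which is injective only when $\rho(A_i) \ne 1$, illustrating precisely why the lower bound appears in the statement.
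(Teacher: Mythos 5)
Your proposal is correct and takes essentially the same route as the paper: both reduce via Proposition \ref{prop:KMS-s-states} to strict monotonicity of $\psi_i(\theta)=\rho(B_i(y,\theta))^{1/\theta}$ on $(a,b)$, obtained by differentiating $\ln\psi_i$ and combining the strict negativity of $-\tfrac{1}{\theta^2}\ln\rho(B_i(y,\theta))$ (from the hypothesis $\rho>1$) with the fact that $\theta\mapsto\rho(B_i(y,\theta))$ is non-increasing. The only cosmetic difference is that the paper justifies the latter via Gelfand's formula while you use entrywise monotonicity of the spectral radius of nonnegative matrices, and you spell out explicitly (via single-colour edges) the step from $\alpha^{y,\theta}=\alpha^{y,\beta}$ to $\psi_i(\theta)=\psi_i(\beta)$, which the paper leaves implicit by reference to the proof of Proposition \ref{prop:one-vertex-unique-beta}.
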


\begin{proof}
This follows from the explicit  computation of the derivative of
\[	\psi_i(\theta)=   \rho(B_i(y,\theta))^{1/\theta};\]
we will show that $\frac{d\psi_i}{d\theta} < 0$ on the entire interval $(a, b)$  under the given hypotheses.
Once this is established,  the same argument used in the proof of Proposition \ref{prop:one-vertex-unique-beta} ends the proof.

To show $\frac{d\psi_i}{d\theta} <0$ on $(a,b)$,  we explicitly compute
\begin{align*}
\frac{d\psi_i}{d\theta}  & = \frac{d}{d\theta} \left( e^\frac{\ln \rho(B_i(y,\theta))}{\theta}   \right) \\
& = \rho(B_i(y,\theta))^{1/\theta} \left[ -\frac{1}{\theta^2}  \ln( \rho(B_i(y,\theta))) + \frac{\frac{d}{d\theta} \left(\ln \rho(B_i(y,\theta))\right)}{\theta} \right] .
\end{align*}
Now  $ \rho(B_i(y,\theta) ) >1
$  implies  $\ln( \rho(B_i(y,\theta)))>0 .$ Therefore
\begin{align*}
\frac{d\psi_i}{d\theta}  <0\  &\iff\  \ \left[ -\frac{1}{\theta}  \ln( \rho(B_i(y,\theta))) + {\frac{d}{d\theta}\left(\ln \rho(B_i(y,\theta))\right)}{} \right] <0\\
 &\iff\  \ \frac{\frac{d}{d\theta}\left(\ln \rho(B_i(y,\theta))\right)}{ \ln( \rho(B_i(y,\theta)))     }  <\, \frac1\theta.\\
\end{align*}
Gelfand's formula implies that  $\rho(B_i(y,\theta)) $ is a non-increasing function of $\theta$.  Taking derivatives reveals that $\ln (\rho(B_i(y,\theta))) $ is also a non-increasing function of $\theta$. It follows that
\[ \frac{\left(\ln \rho(B_i(y,\theta))\right)'}{ \ln( \rho(B_i(y,\theta)))} \, \leq \, 0 < \frac{1}{\theta},\]
thus proving $\frac{d\psi_i}{d\theta} <0.$
\end{proof}

For the remainder of the section, we assume a $\R_+$-functor $y$ and $\beta \in (0,\infty)$ are given.  We  will characterize the KMS$_\beta$ states of  $(C^*(\Lambda), \alpha^{y,\beta})$. Recall from \cite{bra-robII} that the KMS$_\beta$-states for $\beta\in \mathbb{R}$ form a Choquet simplex, and in particular are determined by the extremal KMS$_\beta$-states.

In the terminology of \cite{christensen} or \cite{neshveyev},  Proposition \ref{prop:unique-measure} above implies that $\mu_{y,\beta}$ is the unique measure on $\Lambda^\infty$ which is $e^{-\beta c_{y,\beta}}$-quasi-invariant.
Therefore, Lemma 4.1 and Theorem 5.2 of \cite{christensen} imply that the extremal KMS$_\beta$ states for $\alpha^{y,\beta}$ are in bijection with  a certain subgroup $\widehat B$ of $\T^k$.  Theorem \ref{thm:KMS-simplex} below establishes that $B$ is equal to $\text{Per}\Lambda \subseteq \Z^k$, the periodicity group of $\Lambda$.

We observe that Christensen's Theorem 5.2 is a refinement of Neshveyev's description \cite{neshveyev} of KMS states on groupoid $C^*$-algebras.  While both Christensen and Neshveyev use quasi-invariant measures to parametrize KMS states, Christensen replaces Neshveyev's measurable fields of states with the group $B$, which consists of  symmetries of the simplex of KMS$_\beta$ states.

In the case that $y\equiv 0$ and $\beta=1$, so that $\alpha^{y,\beta}$ is the preferred dynamics on $C^*(\Lambda)$, Theorem \ref{thm:KMS-simplex} reduces to \cite[Theorem 7.1]{aHLRS}. Our proof of Theorem \ref{thm:KMS-simplex} combines \cite[Theorem 7.1]{aHLRS} with \cite[Theorem 5.2]{christensen} to show that $C^*(\text{Per}\,\Lambda)$ parametrizes KMS$_\beta$ states for any of the dynamics in the family $\{\alpha^{y,\beta}\}_{\beta> 0}$.

\begin{thm}
\label{thm:KMS-simplex}
Let $\Lambda$ be a finite, strongly connected $k$-graph, with a $\R_+$-functor $y$, and fix $\beta \in (0, \infty)$.  The simplex of KMS$_\beta$ states for $(C^*(\Lambda), \alpha^{y,\beta})$ is affinely isomorphic to the state space of $C^*(\text{Per}\,\Lambda)$.
\end{thm}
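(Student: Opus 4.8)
The plan is to leverage the groupoid machinery assembled in the excerpt, reducing the statement to the two cited black boxes: Christensen's Theorem 5.2 and the periodicity analysis of \cite{aHLRS}. First I would invoke the preceding discussion, which establishes that $\mu_{y,\beta}$ is the unique $e^{-\beta c_{y,\beta}}$-quasi-invariant probability measure on $\Lambda^\infty$ (Proposition \ref{prop:unique-measure} together with Remark \ref{rmk:quasi-invar}). By \cite[Lemma 4.1 and Theorem 5.2]{christensen}, for each quasi-invariant measure the associated extremal KMS$_\beta$ states are parametrized by the characters of a certain subgroup $B \leq \Z^k$ of symmetries; and since there is a \emph{unique} quasi-invariant measure here, the full simplex of KMS$_\beta$ states is affinely isomorphic to the state space of $C^*(B) = C^*(\widehat{B})$. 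Thus the entire theorem reduces to the identification $B = \operatorname{Per}\Lambda$.

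Next I would spell out the definition of Christensen's group $B$ in our concrete setting. In \cite{christensen}, given the groupoid $\G_\Lambda$ with cocycle $c_{y,\beta}$ and quasi-invariant measure $\mu_{y,\beta}$, the group $B$ consists (up to the precise formulation in \cite{christensen}) of those $m \in \Z^k$ such that the partial homeomorphism of $\Lambda^\infty$ induced by degree-$m$ shifting agrees $\mu_{y,\beta}$-almost everywhere with the identity on the appropriate piece of the unit space — equivalently, those $m = d(\lambda) - d(\nu)$ for which $\sigma^{d(\lambda)}(x) = \sigma^{d(\nu)}(x)$ holds for $\mu_{y,\beta}$-almost every $x$. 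The key bridge is Equation \eqref{eq:periodic-paths-dense} from Remark \ref{rmk:quasi-invar}, which states precisely that $\mu_{y,\beta}(\{x : \sigma^m(x) = \sigma^n(x)\})$ equals $1$ when $m - n \in \operatorname{Per}\Lambda$ and $0$ otherwise. This measure-theoretic dichotomy is exactly what is needed to match Christensen's almost-everywhere symmetry condition with membership in $\operatorname{Per}\Lambda$.

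The main step, then, is to show $B = \operatorname{Per}\Lambda$ by a two-way containment driven by \eqref{eq:periodic-paths-dense}. For $\operatorname{Per}\Lambda \subseteq B$: if $m \in \operatorname{Per}\Lambda$ then \eqref{eq:periodic-paths-dense} gives $\mu_{y,\beta}$-full measure to the shift-agreement set, so the induced symmetry is trivial almost everywhere and $m \in B$. Conversely, for $B \subseteq \operatorname{Per}\Lambda$: if $m \notin \operatorname{Per}\Lambda$, then \eqref{eq:periodic-paths-dense} forces the relevant agreement set to have measure zero, so the induced partial homeomorphism cannot agree with the identity on a positive-measure set, whence $m \notin B$. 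Because $\mu_{y,\beta}$ assigns positive mass to every cylinder (Proposition \ref{prop:unique-measure}), the measure is faithful enough that ``almost everywhere'' genuinely detects the periodicity group rather than a proper sub- or supergroup. This is where I would need to be careful to match Christensen's exact definition of the symmetry group to the shift-invariance formulation of \eqref{eq:Per-Lambda}, since the two are stated in different languages.

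The hard part will be reconciling the formalism of \cite{christensen} with the $k$-graph picture precisely enough that the identification $B = \operatorname{Per}\Lambda$ is rigorous rather than morally correct. Concretely, Christensen's group is defined via the isotropy/symmetry structure of the groupoid relative to the quasi-invariant measure, and one must verify that, under the isomorphism $C^*(\Lambda) \cong C^*(\G_\Lambda)$ and the cocycle \eqref{eq:cocycle-y-beta}, the relevant isotropy subgroup of $\G_\Lambda$ measured by $\mu_{y,\beta}$ is exactly $P_\Lambda$ and its associated degree-difference group is $\operatorname{Per}\Lambda$. The reference to \cite[Theorem 7.1]{aHLRS} handles exactly the case $y \equiv 0$, $\beta = 1$, where the answer is already known to be $C^*(\operatorname{Per}\Lambda)$; so the cleanest route is to observe that \eqref{eq:periodic-paths-dense} shows the \emph{same} periodicity group and the \emph{same} almost-everywhere behaviour arise for general $(y,\beta)$ as in the preferred-dynamics case, and that Christensen's theorem then transports the \cite{aHLRS} conclusion verbatim. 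Once the symmetry group is pinned to $\operatorname{Per}\Lambda$, the affine isomorphism with the state space of $C^*(\operatorname{Per}\Lambda)$ is immediate from \cite[Theorem 5.2]{christensen}.
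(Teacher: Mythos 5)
Your proposal is correct and follows essentially the same route as the paper: uniqueness of the quasi-invariant measure $\mu_{y,\beta}$, Christensen's Theorem 5.2, and the dichotomy \eqref{eq:periodic-paths-dense} reduce everything to the identification $B = \operatorname{Per}\Lambda$, and the ``cleanest route'' you name at the end --- exploiting that the symmetry data is independent of $(y,\beta)$ and transporting the case $y=0$, $\beta=1$ via \cite[Theorem 7.1]{aHLRS} --- is exactly how the paper closes the containment $B \subseteq \operatorname{Per}\Lambda$ (your primary direct argument for that containment would need extra care, since Christensen defines $B$ as the annihilator $N^\perp$ of the stabilizer $N \subseteq \T^k$ of $\omega_1$, not directly via almost-everywhere shift-agreement). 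The one verification you flag but do not carry out, namely that the isotropy seen by $\mu_{y,\beta}$ is exactly $P_\Lambda$, is done in the paper by showing that $d(\rho)-d(\eta) \in \operatorname{Per}\Lambda$ together with $Z(\rho) \cap Z(\eta) \neq \emptyset$ forces $(\rho,\eta) \in P_\Lambda$, using \cite[Lemma 5.1(b)]{aHLRS}.
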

\begin{proof}
Recall the isomorphism $C^*(\G_\Lambda) \cong C^*(\Lambda)$ sending $\chi_{Z(\rho, \eta)}$ from the dense subalgebra $C_c(\G_\Lambda) \subseteq C^*(\G_\Lambda)$ to $s_\rho s_\eta^* \in C^*(\Lambda)$, where
\[Z(\rho, \eta) := \{ (x, d(\rho) -d(\eta), y) \mid x \in Z(\rho), y \in Z(\eta)\} \subseteq \G_\Lambda
\]
for $\rho, \eta \in \Lambda$. Recall also that $\alpha^{y,\beta}$ is determined by the cocycle $c_{y,\beta}$. We aim to apply \cite[Theorem 5.2]{christensen} to $\mathcal{G}_\Lambda$ and $c_{y,\beta}$. For this we must first  identify the extremal KMS$_\beta$ states of $(C^*(\mathcal{G}_\Lambda), \alpha^{y,\beta})$; these are given as in Equation (5.1) of \cite[Theorem 5.1]{christensen} by integrating functions $f\in C_c(\G_\Lambda)$ with respect to measures which are extremal points in the set of  $e^{-\beta c_{y,\beta}}$-quasi-invariant measures on $\Lambda^\infty$. In our case, there is a unique such measure, namely $\mu_{y,\beta}$, see Proposition \ref{prop:unique-measure}.  Since any KMS state must be linear and continuous, we may therefore assume that Christensen's functions $f$ are of the form $\chi_{Z(\rho,\eta)}$ for $\rho,\eta \in \Lambda$. Equivalently, we apply  Equation (5.1) of \cite{christensen} to monomials $s_\rho s_\eta^*$ in $C^*(\Lambda)$.
 Note also that in our setting, Christensen's group $A$ is $ \Z^k$ and $\Phi: \G_\Lambda \to A$ is given by $\Phi(x, n,y) = n$.

Assume first that the monomial $s_\lambda s_\nu^*$ corresponds to a periodic pair $(\lambda, \nu) \in P_\Lambda$ as in Definition \ref{def:periodicity}. Then  $Z(\lambda) = Z(\nu)$, and  Equation (5.1) of \cite{christensen} implies that any extremal KMS$_\beta$ state $\omega$ for $\alpha^{y, \beta}$ must satisfy 
\[ \omega ( s_\lambda s_\nu^*)  = z^{d(\lambda) - d(\nu)} \mu_{y,\beta}(Z(\lambda)) = z^{d(\lambda) - d(\nu)}\mu_{y,\beta}(Z(\nu))\]
for some $z \in \T^k$.  Write $\omega_1$ for the extremal KMS$_\beta$ state associated to $z = (1, 1, \ldots, 1)$, cf.~\cite[Proposition 4.2]{christensen}.

For an arbitrary pair $(\rho,\eta)$, Theorem 5.1 of \cite{christensen} reveals that every extremal KMS$_\beta$ state for $\alpha^{y,\beta}$ will be of the form
\[ \omega (s_\rho s_\eta^*) = z^{d(\rho) - d(\eta)} \mu_{y,\beta}(A_{\rho,\eta}),\]
where $A_{\rho,\eta}=\{ x\in Z(\rho) \cap Z(\eta) \mid \sigma^{d(\rho) +m}(x) = \sigma^{d(\eta)+m}(x) \text{ for some } m \in \N^k\}$.
However, by Remark \ref{rmk:aHLRS-Prop-8-2} we see that $\mu_{y,\beta}(A_{\rho,\eta})$ will be zero unless $d(\rho) - d(\eta) \in \text{Per}\Lambda$.

We claim that if $d(\rho) - d(\eta) \in \text{Per}\Lambda$ and $Z(\rho) \cap Z(\eta) \not= \emptyset$, then $(\rho, \eta) \in P_\Lambda$.  Indeed, with $v=r(\rho)=r(\eta)$, the fact that $\sigma^{d(\rho)}(y)=\sigma^{d(\eta)}(y)$ for all $y\in Z(v)$ implies,  by Lemma 5.1(b) of \cite{aHLRS}, that there is a unique $\tilde{\eta} \in \Lambda^{d(\eta)}$ such that $\rho x'=\tilde{\eta} x'$ for all $x'\in Z(s(\rho))$. This in particular means that $(\rho, \tilde{\eta}) \in P_\Lambda$.  Therefore, if there exists $x = \rho  x_1 = \eta x_2 \in Z(\rho) \cap Z(\eta)$ for some $x_1, x_2 \in \Lambda^\infty$, then
\[ \tilde{\eta} x_1 = \rho x_1 = \eta x_2 \Rightarrow \eta x_2(0, m) = \tilde{\eta} x_1(0,m)\]
for any $m \in \N^k$. 
Since $d(\eta) = d(\tilde{\eta})$, it follows that $\tilde{\eta} = \eta$ and $x_1 = x_2$. 

  In other words, any extremal KMS$_\beta$ state for $\alpha^{y, \beta}$ must be of the form
\begin{equation}
\omega_z(s_\rho s_\eta^*) = \begin{cases}
z^{d(\rho) - d(\eta)} \mu_{y,\beta}(Z(\rho)) = z^{d(\rho) - d(\eta)} \mu_{y,\beta}(Z(\eta)), & (\rho,\eta) \in P_\Lambda\\
0, & \text{else}
\end{cases}
\label{eq:extremal-kms-formula}
\end{equation}
for some $z \in \T^k$.

Theorem 5.2(2) of \cite{christensen} describes a homeomorphism between the dual $\widehat B$ of a certain subgroup $B \subseteq A = \Z^k$ and the set of extremal KMS$_\beta$ states for $\alpha^{y,\beta}$
 More precisely, $B$ is defined in terms of a subgroup $N \subseteq \T^k$.  In our setting,  
\[ N = \{ z \in \T^k: \omega_1 (s_\rho s_\eta^*) = z^{d(\rho) -d(\eta)} \omega_1(s_\rho s_\eta^*) \ \forall \ (\rho, \eta) \in \Lambda *_s \Lambda\}.\]
Since $\omega_1(s_\rho s_\eta^*) = 0$ unless $(\rho, \eta) \in P_\Lambda$, it follows that
\begin{equation}
\label{eq:N}
N= \{ z \in \T^k: z^{d(\rho) - d(\eta)} = 1 \ \forall \ (\rho, \eta) \in P_\Lambda\}.
\end{equation}
Consequently,
\[ B = N^\perp =  \{ m \in \Z^k: z^m =1 \ \forall \ z \in N\}\]
clearly contains $\text{Per}\Lambda$.

Our goal is to show that $B =\text{Per}\Lambda$.  To this end, note that $N$ is independent of the choice of $y$ and $\beta$.  Furthermore, when $y=0$ we obtain $B_i(0, \beta) =A_i$, for any $\beta >0$.  
  Thus,   $\mu_{0,\beta}$ agrees with the measure $M$ from Proposition 8.1 of \cite{aHLRS}, and $\alpha^{0,1}$ agrees with the preferred dynamics $\alpha$ from \cite{aHLRS}.

   Theorem 7.1 of \cite{aHLRS} establishes that the extremal KMS$_1$ states for $\alpha$ are in bijection with the pure states of $C_0( \widehat{\text{Per}\, \Lambda})$, that is, with the points of $\widehat{\text{Per}\, \Lambda}$.  Moreover, Remark 10.4 of \cite{aHLRS} shows that this bijection (just as in Theorem 5.2 of \cite{christensen}) assigns the state
\[ \phi_z(s_\lambda s_\nu^*) = \begin{cases}
M(Z(\lambda)) z^{d(\lambda) - d(\nu)}, & (\lambda, \nu) \in P_\Lambda \\
0, & \text{ else}
\end{cases}\]
to $z \in  \widehat{\text{Per}\, \Lambda}.$
Therefore, we must have $\widehat{B}  = \widehat{\text{Per}\, \Lambda}$ and hence $B = \text{Per}\, \Lambda$.  Consequently, for any  $\R_+$-functor $y$ and $\beta \in \R_+$, the simplex of KMS$_\beta$ states of $(C^*(\Lambda), \alpha^{y,\beta})$ is affinely isomorphic to the state space of $C^*(\text{Per}\, \Lambda)$.
\end{proof}
\begin{cor}
\label{cor:simplicity}
Let $\Lambda$ be a finite, strongly connected higher-rank graph, and fix $\beta \in (0, \infty)$ and an $\R_+$-functor $y$ on $\Lambda$.  The $C^*$-dynamical system $(C^*(\Lambda), \alpha^{y,\beta})$ admits a unique KMS$_\beta$ state iff $\Lambda$ is aperiodic, iff $C^*(\Lambda)$ is simple.
\end{cor}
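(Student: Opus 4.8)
The plan is to obtain both equivalences as immediate consequences of Theorem \ref{thm:KMS-simplex} together with the structure theory for finite, strongly connected $k$-graphs recalled in the introduction. By Theorem \ref{thm:KMS-simplex}, the simplex of KMS$_\beta$ states for $(C^*(\Lambda), \alpha^{y,\beta})$ is affinely isomorphic to the state space of $C^*(\text{Per}\,\Lambda)$. Since an affine isomorphism sends a one-point set to a one-point set and conversely, the system admits a \emph{unique} KMS$_\beta$ state if and only if the state space of $C^*(\text{Per}\,\Lambda)$ is a single point. Thus the first task reduces entirely to deciding when that state space degenerates.

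First I would note that $\text{Per}\,\Lambda$ is a subgroup of $\Z^k$ and hence a discrete abelian group, so $C^*(\text{Per}\,\Lambda)$ is commutative and, via the Fourier transform, isomorphic to $C(\widehat{\text{Per}\,\Lambda})$ for the compact dual group $\widehat{\text{Per}\,\Lambda}$. By the Riesz representation theorem the state space of $C(\widehat{\text{Per}\,\Lambda})$ is the set of Borel probability measures on $\widehat{\text{Per}\,\Lambda}$, and this is a singleton precisely when $\widehat{\text{Per}\,\Lambda}$ is a single point, i.e.\ precisely when $\text{Per}\,\Lambda$ is the trivial group. By Definition \ref{def:periodicity}, triviality of $\text{Per}\,\Lambda$ is exactly the assertion that $\Lambda$ is aperiodic (recall that strong connectivity guarantees $\text{Per}(v) = \text{Per}\,\Lambda$ is independent of $v$). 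This yields the first equivalence: $(C^*(\Lambda), \alpha^{y,\beta})$ has a unique KMS$_\beta$ state if and only if $\Lambda$ is aperiodic.

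For the remaining equivalence I would invoke the known structural result for finite, strongly connected $k$-graphs: simplicity of $C^*(\Lambda)$ is equivalent to aperiodicity of $\Lambda$, i.e.\ to triviality of $\text{Per}\,\Lambda$, as established in \cite{aHLRS}. Chaining this with the equivalence of the previous paragraph completes the proof. The ``hard part'' here is really negligible, as the mathematical content has already been extracted in Theorem \ref{thm:KMS-simplex}; the only points requiring care are bookkeeping ones, namely confirming that the word ``aperiodic'' in the statement coincides with triviality of the group $\text{Per}\,\Lambda$ featured in Theorem \ref{thm:KMS-simplex}, and checking that the cited simplicity criterion from \cite{aHLRS} applies verbatim under our standing hypotheses of finiteness and strong connectivity. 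The genuinely useful observation is simply that the state space of an abelian group $C^*$-algebra collapses to a point exactly when the group is trivial.
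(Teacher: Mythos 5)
Your proposal is correct and takes essentially the same route as the paper: the first equivalence is read off from Theorem \ref{thm:KMS-simplex}, with your Fourier-transform digression merely spelling out the (standard) fact that the state space of $C^*(\text{Per}\,\Lambda) \cong C(\widehat{\text{Per}\,\Lambda})$ is a singleton iff $\text{Per}\,\Lambda$ is trivial, which is exactly aperiodicity. The second equivalence is, as in the paper, just a citation of the simplicity criterion from \cite{aHLRS} (Theorem 11.1 there), so nothing further is needed.
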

\begin{proof}
The last equivalence was established in  \cite[Theorem 11.1]{aHLRS}.  For the first equivalence, observe that by Theorem \ref{thm:KMS-simplex},
uniqueness of the KMS$_\beta$ state is equivalent to the triviality of $C^*(\text{Per}\,\Lambda)$ -- in other words, to the aperiodicity of $\Lambda$.
\end{proof}

\begin{rmk}\label{rmk:McNamaras uniqueness} In his thesis \cite{McNamara}, McNamara considers finite $k$-graphs $\Lambda$ which are {\em coordinatewise irreducible}, in the sense that each coordinate matrix   $A_i$, for $i=1,\dots, k$, is irreducible.   In particular,  \cite[Theorem 5.30]{McNamara}  establishes that given a $\R_+$-functor $y$ on such a $k$-graph $\Lambda$ and $\beta\in(0,\infty)$, if the statement
\begin{equation}
\label{eq:needed-for-aperiodicity}
y(\lambda) + \frac{1}{\beta}  \ln (\rho(B(y,\beta))^{d(\lambda)} )= y(\nu) + \frac{1}{\beta}  \ln (\rho(B(y,\beta))^{d(\nu)} )\Rightarrow d(\lambda) = d(\nu)
\end{equation}
holds, then
 there is a unique KMS state $\phi$ of $(C^*(\Lambda), \alpha^{y,\beta})$ occurring at (inverse) temperature $\beta$.  Moreover, this KMS state satisfies
$\phi(s_\lambda s_\nu^*)=\delta_{\lambda,\nu}e^{-\beta y(\lambda)} \left( \rho(B(y,\beta))^{-d(\lambda)}\right)^{1/\beta}\xi_{s(\lambda)}^{y,\beta}$.
Observe  that $\phi$ is the state we obtained in Proposition~\ref{prop:integration-KMS} above.
\end{rmk}
Combining McNamara's result with our Theorem~\ref{thm:KMS-simplex} above implies that if $\Lambda$ is coordinatewise irreducible and Equation \eqref{eq:needed-for-aperiodicity} holds for at least one pair $(y,\beta)$, then $\Lambda$ must be aperiodic. Given the potential importance of this result for applications, we also offer a direct proof which does not rely on  Theorem \ref{thm:KMS-simplex}.

\begin{cor}
\label{cor:aperiodicity}
Let $\Lambda$ be a finite, coordinatewise irreducible $k$-graph.   If there exists an $\R_+$-functor $y$ on $\Lambda$ and $\beta \in (0, \infty)$ such that Equation \eqref{eq:needed-for-aperiodicity} holds,
then $\Lambda$ is aperiodic.
\end{cor}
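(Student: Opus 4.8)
The plan is to argue by contraposition, transporting the periodicity data of $\Lambda$ into the conformal scaling formula \eqref{eq:mu-y-beta} for the measure $\mu_{y,\beta}$. Since each coordinate matrix $A_i$ is irreducible, $\Lambda$ is in particular finite and strongly connected (cf.~\cite[Lemma 4.1 and Example 4.3]{aHLRS}), so Proposition~\ref{prop:unique-measure} supplies the probability measure $\mu_{y,\beta}$ together with the strictly positive common Perron--Frobenius eigenvector $\xi^{y,\beta}$, and the defining formula \eqref{eq:mu-y-beta} is at our disposal throughout.

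First I would extract a single periodic pair from the failure of aperiodicity. Suppose $\Lambda$ is not aperiodic, so that $\Per\Lambda \neq \{0\}$. By the description \eqref{eq:Per-Lambda}, there is then a pair $(\lambda,\nu) \in P_\Lambda$ with $d(\lambda) \neq d(\nu)$; by definition this means $\lambda x = \nu x$ for every $x \in \Lambda^\infty$ with $r(x) = s(\lambda)$. From this identity I would record the elementary consequences $r(\lambda) = r(\nu)$ and $s(\lambda) = s(\nu)$, and then observe that the two cylinder sets coincide: $Z(\lambda) = \{\lambda x\} = \{\nu x\} = Z(\nu)$. In particular $\mu_{y,\beta}(Z(\lambda)) = \mu_{y,\beta}(Z(\nu))$ and $\xi^{y,\beta}_{s(\lambda)} = \xi^{y,\beta}_{s(\nu)}$.

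Next I would substitute these equalities into \eqref{eq:mu-y-beta}. Equating the two expressions for $\mu_{y,\beta}(Z(\lambda))$ and $\mu_{y,\beta}(Z(\nu))$ and cancelling the common strictly positive factor $\xi^{y,\beta}_{s(\lambda)} = \xi^{y,\beta}_{s(\nu)}$ yields
\[ e^{-\beta y(\lambda)} \rho(B(y,\beta))^{-d(\lambda)} = e^{-\beta y(\nu)} \rho(B(y,\beta))^{-d(\nu)}. \]
Taking logarithms and dividing by $-\beta$ turns this into precisely the hypothesis of the implication \eqref{eq:needed-for-aperiodicity}, namely $y(\lambda) + \frac{1}{\beta}\ln(\rho(B(y,\beta))^{d(\lambda)}) = y(\nu) + \frac{1}{\beta}\ln(\rho(B(y,\beta))^{d(\nu)})$. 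Invoking Equation~\eqref{eq:needed-for-aperiodicity} then forces $d(\lambda) = d(\nu)$, contradicting the choice of $(\lambda,\nu)$. Hence $\Per\Lambda = \{0\}$ and $\Lambda$ is aperiodic.

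I do not expect a deep obstacle, since the argument is essentially a dictionary between the periodicity of $\Lambda$ and the scaling behaviour of $\mu_{y,\beta}$. The one place that requires care is the reduction from an arbitrary nonzero element of $\Per\Lambda$ to a concrete periodic pair $(\lambda,\nu)$ and the verification that $Z(\lambda) = Z(\nu)$ and $s(\lambda) = s(\nu)$; strong connectivity is used only to guarantee that $\xi^{y,\beta}$ is strictly positive, which is exactly what licenses the cancellation making the passage to logarithms legitimate.
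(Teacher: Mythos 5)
Your proposal is correct and follows essentially the same route as the paper's proof: argue by contrapositive, pick $(\lambda,\nu)\in P_\Lambda$ with $d(\lambda)\neq d(\nu)$, use $Z(\lambda)=Z(\nu)$ and $s(\lambda)=s(\nu)$ together with the formula \eqref{eq:mu-y-beta} for $\mu_{y,\beta}$, cancel the strictly positive entry $\xi^{y,\beta}_{s(\lambda)}$, and take logarithms to contradict \eqref{eq:needed-for-aperiodicity}. The only cosmetic difference is that the paper states the argument directly as a contrapositive while you phrase it as a contradiction, and you spell out the verification that $Z(\lambda)=Z(\nu)$, which the paper leaves as ``by construction.''
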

\begin{proof}
We argue by contrapositive.  If $\text{Per}\,\Lambda \not= 0$, choose $(\lambda, \nu) \in P_\Lambda$ with $d(\lambda) \not= d(\nu)$.  By construction, $Z(\lambda) = Z(\nu)$, so
\[ \mu_{y,\beta}(Z(\lambda)) = \mu_{y,\beta}(Z(\nu))\]
 for any $\R_+$-functor $y$ and $\beta \in (0,\infty)$.  The fact that $s(\lambda) = s(\nu)$ (and hence $\xi^{y,\beta}_{s(\lambda)} = \xi^{y,\beta}_{s(\nu)} > 0$) whenever $(\lambda,\nu) \in P_\Lambda$ then implies that
 \[ e^{\beta y(\lambda)} \rho(B(y,\beta))^{d(\lambda)} = e^{\beta y(\nu)} \rho(B(y,\beta))^{d(\nu)}.\]
 Taking logarithms of both sides and dividing by $\beta$ yields the left-hand side of Equation \eqref{eq:needed-for-aperiodicity}, yet $d(\lambda) \not= d(\nu)$.
\end{proof}

The preceding Corollary generalizes \cite[Corollary 7.2]{aHLRS}, which establishes that for periodic $k$-graphs which are coordinatewise irreducible, the set $\{ \ln(\rho(A_i))\}_{i=1}^k$ is rationally dependent.
Indeed, rational dependence of the set $\{ \ln(\rho(A_i))\}_{i=1}^k$ implies that Equation \eqref{eq:needed-for-aperiodicity} fails for  $y = 0$ and $\beta =1$.  Corollary \ref{cor:aperiodicity} implies that for periodic $k$-graphs which are coordinatewise irreducible, Equation \eqref{eq:needed-for-aperiodicity} must fail for all choices of $y$ and $\beta$.
In other words, Corollary \ref{cor:aperiodicity} offers an expanded set of strategies for detecting aperiodicity of the $k$-graph, and hence the simplicity of $C^*(\Lambda)$.

\section{Examples of $\R_+$-functors}

\label{sec:examples}
Before addressing the relationship between $\R_+$-functors and Hausdorff measures in Section \ref{sec:hausdorff}, we pause to discuss the range of possibilities for $\R_+$-functors on two examples of finite, strongly connected 2-graphs.  We also describe the associated actions and quasi-invariant measures.  Finally, we identify the periodicity groups of these 2-graphs; Theorem \ref{thm:KMS-simplex} then enables us to reconstruct the KMS states associated to these 2-graphs and $\R_+$-functors.

\subsection{An example from \cite{McNamara}}
\label{sec:McNamara}

We begin with an example which was studied in Section 5.11 of McNamara's thesis \cite{McNamara}.  Namely, $\Lambda$ is a 2-graph with one vertex $v$ and two blue edges (called $e_1, e_2$) and two red edges (called $f_1, f_2$) and factorization relations
\[
e_1f_1 = f_1e_1,\ e_1f_2 = f_1e_2,\ e_2f_1 = f_2e_1,\  e_2f_2 = f_2e_2.
\]
Consequently, $Z(e_i) = Z(f_i)$ for $i =1,2$.  It follows that $\Lambda$ is periodic and $\text{Per}\, (\Lambda) \supseteq \Z\{(1,-1)\}$.  Indeed, this inclusion is an equality: if $\text{Per}\, (\Lambda) \supsetneq \Z\{(1,-1)\}$, then there would exist integers $\{n_i, m_i\}_{i=1}^\ell$ such that
\[ Z(v) = Z(f_1^{n_1} f_2^{m_1} \cdots f_1^{n_\ell} f_2^{m_\ell}).\]
Since $Z(f_1) \sqcup Z(f_2) = Z(v)$, however, this is impossible.

To describe all the $\R_+$-functors $y$ on this 2-graph, set
${\bf{e}}_i=y(e_i)$, and ${\bf f}_i=y(f_i)$.
The factorization relations then tell us that all the $\R_+$-functors must  satisfy
\[
{\bf e}_1+{\bf f}_2 = {\bf f}_1+{\bf e}_2,\ {\bf e}_2+{\bf f}_1 = {\bf f}_2+{\bf e}_1.
\]
In other words, we have 3 free variables ${\bf e}_1, {\bf e}_2, {\bf f}_1$; and ${\bf f}_2 = {\bf f}_1 + {\bf e}_2 - {\bf e}_1$.
Since $|\Lambda^0| =1$, we have
\begin{align*}
B_1(y, \theta)&=\begin{pmatrix} e^{-\theta {\bf e}_1}+ e^{-\theta {\bf e}_2}\end{pmatrix},\\
\ B_2(y, \theta)&=\begin{pmatrix} e^{-\theta {\bf f}_1}+ e^{-\theta {\bf f}_2}\end{pmatrix}=\begin{pmatrix} e^{-\theta {\bf f}_1}+ e^{-\theta ({\bf f}_1+{\bf e}_2-{\bf e}_1)},\end{pmatrix}
\end{align*}
and $\xi^{y,\theta} =(1)$ for any $y, \theta$.

The fact that $\Lambda$ has only one vertex implies that each matrix $B_i(y,\beta)$ will be irreducible, for any choice of $y$ and $\beta$. By Corollary \ref{cor:aperiodicity},
the periodicity of $\Lambda$ means that Equation \eqref{eq:needed-for-aperiodicity} will never hold, regardless of our choice of $y$ and $\beta$.  Indeed, although $d(f_1) \not= d(e_1)$, we always have
\[ y(e_1) + \frac{1}{\beta} \ln(\rho(B_1(y,\beta)) = y(f_1) + \frac{1}{\beta} \ln(\rho(B_2(y,\beta)).\]
Consequently, since
\[ \alpha^{y,\beta}_t(s_\lambda) = e^{it(y(\lambda) + \frac{1}{\beta} \ln(\rho(B(y,\beta))^{d(\lambda)}))} s_\lambda,\]
the action $\alpha^{y,\beta}$ scales both $s_{e_1}$ and $s_{f_1}$ by the same complex number.

Recall  that
 every infinite path in $\Lambda^\infty$ can be written uniquely as a one-sided infinite sequence of edges which alternate blue-red-blue-red.
 In $\Lambda$, all edges are composable, so $\Lambda^\infty$ is naturally homeomorphic to
the infinite product $\prod_{\N}\{ 0,1\}$.
Moreover, $\mu_{y,\theta} (Z(e_2))=1- \mu_{y,\theta} (Z(e_1))$ because $\Lambda^\infty= Z(e_1) \sqcup Z(e_2)$. By our identification of $\Lambda^\infty$ with $\prod_\N \{ 0,1\}$, we can view $\mu_{y,\theta}$ as a Markov measure $\mu_x$ on $\prod_\N \{ 0,1\}$, where $x=\mu_{y,\theta} (Z({e}_1))=\mu_{y,\theta}(Z(f_1))$.
In the notation of Section 3.1 of \cite{dutkay-jorgensen-monic}, the  Markov measure $\mu_x$ on $\prod_{n\in \N} \{0,1\}$ corresponds to the matrix
\[
T_x=\begin{pmatrix} x & 1-x\\  1-x & x\end{pmatrix},\]
and assigns measure $\mu_x(Z(a_1 \cdots a_n)) = x^{\#\{i: a_i = 0\}} (1-x)^{\#\{ j: a_j =1\}}$ to the cylinder set $Z(a_1 \cdots a_n)$.
Indeed, \cite[Theorem 3.9]{dutkay-jorgensen-monic} implies that if $x \not= x'$ then $\mu_x$ and $\mu_{x'}$ are mutually singular.  It follows that for $x =\mu_{y,\theta}( Z({e}_1)) \not= 1/2$, the measure $\mu_{y,\theta}$ is mutually singular with respect to the measure $M$ from \cite[Proposition 8.1]{aHLRS}.

In fact, the correspondence taking $(y,\theta)$ to $x$ such that $\mu_{y,\theta}= \mu_x$ is surjective.  That is,
 given $x \in (0, 1/2)$, we will describe a way to choose a pair $(y,\theta)$ such that  $\mu_{y,\theta}(Z(e_1)) = \mu_x$.
Having chosen $x \in (0,1/2)$ and $y(e_1)$, choose $\theta > \frac{ \ln((1-x)/x) }{y(e_1)} > 0$ and define
\[ y(e_2)= y(e_1) - \frac{1}{\theta} \ln \left( \frac{1-x}{x} \right).\]
Note that $y(e_2)$ will be positive whenever  $\theta > \frac{ \ln((1-x)/x) }{y(e_1)}$.
Setting $y(f_i) := y(e_i)$ completes the definition of the $\R_+$-functor.  However, other constructions of weight functors are also possible; cf.~\cite[Example 5.31]{McNamara}.

\begin{rmk} This example can be extended to the setting of 2-graphs with one vertex and $N$ edges of each color, using the Markov measures associated to $N \times N$ matrices from \cite{dutkay-jorgensen-monic}.
	\end{rmk}

\begin{rmk}
For a fixed $\R_+$-functor $y$, Corollary \ref{cor:continuity-of-measures} tells us that varying $\theta$ produces a continuous family of measures $\mu_{y,\theta}$.  However, if $x \not= x'$, the Markov measures $\mu_x$ and $\mu_{x'}$ are mutually singular.  Thus, equivalence and continuity of a family of measures are different concepts.
\end{rmk}

\subsection{An example from \cite{LLNSW}}
\label{sec:eyeglasses}
 Another motivating example for us was  the 2-graph of \cite[Example 7.7]{LLNSW}, which is described by the edge-colored directed graph
 \[
\begin{tikzpicture}[scale=2]
\node[inner sep=0.5pt, circle] (u) at (0,0) {$u$};
\node[inner sep=0.5pt, circle] (v) at (1.5,0) {$v$};
\node[inner sep=0.5pt, circle] (w) at (3,0) {$w$};
\draw[-latex, thick, blue] (v) edge [out=150, in=30] (u); 
\draw[-latex, thick, blue] (u) edge [out=-30, in=210] (v); 
\draw[-latex, thick, blue] (v) edge [out=30, in=150] (w); 
\draw[-latex, thick, blue] (w) edge [out=210, in=-30] (v); 
\draw[-latex, thick, red, dashed] (v) edge [out=120, in=60] (u); 
\draw[-latex, thick, red, dashed] (u) edge [out=-60, in=240] (v); 
\draw[-latex, thick, red, dashed] (v) edge [out=60, in=120] (w); 
\draw[-latex, thick, red, dashed] (w) edge [out=240, in=-60] (v); 
\node at (0.65, 0.12) {\color{black} $a_0$};
\node at (0.85, -0.12) {\color{black} $c_0$};
\node at (2.15, 0.12) {\color{black} $a_1$};
\node at (2.4, -0.12) {\color{black} $c_1$};
\node at (0.65, 0.55) {\color{black} $d_0$};
\node at (0.85, -0.55) {\color{black} $b_0$};
\node at (2.15, 0.55) {\color{black} $d_1$};
\node at (2.4, -0.55) {\color{black} $b_1$};
\end{tikzpicture}
\]
with factorization rules
\begin{align*}
a_0b_0 &= d_0c_0 \qquad a_1 b_1 = d_1 c_1 \qquad a_1b_0 = d_1c_0\\
a_0 b_1 &= d_0 c_1\qquad c_1 d_1 = b_0 a_0\qquad
c_0d_0 = b_1a_1.
\end{align*}
Again, for any edge $f$, write ${\bf f}$  for the value  $y(f)$.

The linear system arising from the factorization relations that a $\R_+$-functor on $\Lambda$ must satisfy consists of 6 equations, which we write in compressed form as
\begin{align*}
{\bf a}_i+{\bf b}_i &= {\bf d}_i+{\bf c}_i,\,\, i=0,1\\
{\bf a}_i+{\bf b}_i &= {\bf d}_i+{\bf c}_{1-i}, \,\,i=0,1\\
{\bf a}_i+{\bf b}_i &= {\bf d}_{1-i}+{\bf c}_{1-i}, \,\,i=0,1,\\
\end{align*}
This system has 4 free variables ($\bf b_1,\bf  c_1,\bf d_0,\bf d_1$).

Now, suppose we have chosen a $\R_+$-functor $y$ on $\Lambda$ and $\theta \in (0, \infty)$.
Define, for $i = 0,1$,
\begin{align*}
A_i &:= e^{-\theta \, {\bf a}_i}\quad
B_i :=e^{-\theta \, {\bf b}_i}\quad
C_i :=e^{-\theta \, {\bf c}_i}\quad
D_i := e^{-\theta \, {\bf d}_i};
\end{align*}
\[
\text{then} \quad { B_1(y, \theta) = \begin{pmatrix} 0& A_0
	&0 \\ C_0&0&C_1 \\ 0&A_1&0\end{pmatrix}}, \ { B_2(y, \theta) = \begin{pmatrix} 0& D_0
	&0 \\ B_0&0&B_1 \\ 0&D_1&0\end{pmatrix}}.
\]
Straightforward computations reveal that
\[
{\rho(B_1(y, \theta))= \sqrt{A_0C_0+A_1C_1}  = \sqrt{2} \, C_1\, \Big(\frac{D_1}{ B_1}\Big)^{1/2}}
\]
and the unimodular positive eigenvector for $B_1(y,\theta)$ is
\[
{
 \frac{1}{A_0+A_1 + \rho(B_1(y,\theta))} \, \begin{pmatrix} A_0 \\ \rho(B_1(y, \theta)) \\ A_1  \end{pmatrix}}.
\]
Similarly, $
{\rho(B_2(y, \theta))= \sqrt{B_0D_0+B_1D_1} = \sqrt{2}\, (B_1\,D_1)^{1/2}},
$
and the unimodular positive eigenvector for $B_2(y,\theta)$ is
\[
{
	\frac{1} {D_0+D_1+\rho(B_2(y,\theta))}\, \begin{pmatrix} D_0 \\  \rho(B_2(y, \theta)) \\ D_1 \end{pmatrix}}.
\]
Lemma \ref{lemma: Lambda strongly connected} implies that $B_1(y,\theta)$ and $B_2(y,\theta)$ have a unique common positive unimodular eigenvector $\xi^{y,\theta}$, so the  eigenvectors for $B_1(y,\theta)$ and $B_2(y,\theta)$ must be equal.
Moreover, we have
\[
{ \rho(B_1(y, \theta))} {\rho(B_2(y, \theta))}
=2\, C_1 D_1=2\, A_0 B_0.
\]

With the above information,  we can now compute the probability measure $\mu_{y,\theta}$ on some cylinder sets $Z(\lambda)$.  First, observe that
\begin{align*}
\mu_{y,\theta}(Z(a_0b_0))&= A_0\,B_0
{(\rho(B_1(y, \theta)))^{-1}} {(\rho(B_2(y, \theta)))^{-1}}  \xi^{y,\theta}_{s(a_0b_0)} \\
&=\frac12\, \xi^{y,\theta}_{u} .
\end{align*}

Proposition 4.6 of \cite{FGJorKP} explains how a matrix $T_x = \begin{pmatrix}
x & 1-x \\ 1-x & x
\end{pmatrix}$, for $x \in (0,1)$, can be used to construct Markov measures on $\Lambda^\infty$.  For different  values of $x$, the associated Markov measures are inequivalent.  We observe also that the Markov measures  $\mu_x$ from \cite[Proposition 4.6]{FGJorKP} are not probability measures; rather, $\mu_x(\Lambda^\infty) = 2$ .

However, the measure $\mu_{y,\theta}$ can only be a (rescaled) Markov measure for $x=1/2$.  To see this, we recall from \cite{FGJorKP} that
\[ \mu_x(Z(a_0 b_0)) = T_x(1,1) T_x(1,1) = x^2, \quad \text{ while } \quad \mu_x(Z(u)) = T_x(1,1) = x.\]
Therefore, if $\mu_{y,\theta} = 1/2 \mu_x$ for some $x$, then
\[\frac{1}{2} =  \frac{\mu_{y,\theta}(Z(a_0 b_0)}{\mu_{y,\theta}(Z(u))} = \frac{\mu_x(Z(a_0 b_0))}{\mu_x(Z(u))} = x.\]
Note that the Markov measure $\mu_{1/2}$ also assigns $\mu_{1/2}(Z(w)) = 1/2$, and $\mu_{1/2}(Z(v)) = 1$.

Now, recall  from \cite{FGJorKP} that $\mu_{1/2} = 2 M$ where $M$ denotes the measure from \cite[Proposition 8.1]{aHLRS}.  This measure $M$ also arises as $\mu_{y,\theta}$ when  $y=0$. 
In other words, the only way that $\mu_{y,\theta}$ can be a (rescaled) Markov measure is if $\mu_{y,\theta} = M$.

We can completely characterize the KMS$_\beta$ states of $(C^*(\Lambda), \alpha^{y,\beta})$. Indeed, by \cite[Example 7.7]{LLNSW}, we know that $\text{Per}(\Lambda)=2\Z(1,-1)$. Theorem~\ref{thm:KMS-simplex} therefore implies that the simplex of KMS$_\beta$ states is isomorphic to the tracial state space of $C(\mathbb{T})$.

\section{Weights, ultrametrics, and Hausdorff structure}
\label{sec:hausdorff}

In this section, we use the same data (an $\R_+$-functor and a positive number $\theta$)  that we employed in Section \ref{sec:kms} to define the generalized gauge action $\alpha^{y,\theta}$ for a different purpose: namely, we construct  an ultrametric $d_{y,\theta}$ on the infinite path space $\Lambda^\infty$, which we view as a  Cantor set.
 We then compute the Hausdorff dimension and Hausdorff measure of $(\Lambda^\infty, d_{y,\theta})$: Corollary \ref{cor:Hausdorff-dim} establishes that the Hausdorff dimension of $(\Lambda^\infty, d_{y,\theta})$ is $\theta$ -- the same as the inverse temperature for which we characterized the KMS states for the associated dynamics $\alpha^{y,\theta}$ of $C^*(\Lambda)$ in Theorem \ref{thm:KMS-simplex} -- and the associated Hausdorff measure is precisely our unique quasi-invariant measure $\mu_{y,\theta}$.
 In fact, we prove a result about Hausdorff dimension in a greater generality involving weights on Bratteli diagrams with a certain self-similarity property, see Theorem \ref{thm:Hausdorff-dim-self-sim}.

The examples of $k$-graphs and $\R_+$-functors which we discussed in  Section \ref{sec:examples} satisfy the hypotheses of Corollary \ref{cor:Hausdorff-dim}.   In particular, the 2-graph of Section \ref{sec:McNamara} admits $\R_+$-functors giving rise to a large family of inequivalent Hausdorff measures on $\Lambda^\infty$.

Following \cite{pearson-bellissard, julien-savinien-transversal}, our ultrametrics $d_{y,\theta}$ are constructed using weights on Bratteli diagrams.   Thus, we begin by reviewing the construction of a Bratteli diagram associated to a higher-rank graph from \cite{FGJKP2} (Definition \ref{def-k-brat-diagrm}) and discussing how to use a $\R_+$-functor to construct weights on the Bratteli diagram (Propositions \ref{prop:weight} and \ref{prop:weight-one-vertex}).   We then show, in Proposition \ref{prop:ultrametric},  that   the ultrametric $d_{y,\theta}$ arising from such a weight metrizes the cylinder set topology on $\Lambda^\infty$.

We note that weights on Bratteli diagrams and the associated ultrametrics have been studied by many authors \cite{pearson-bellissard, julien-savinien-transversal, FGJKP2}. In particular, Pearson and Bellissard \cite{pearson-bellissard} were motivated by work of Michon \cite{mich}, who  introduced the notion of a weighted tree in his study of Gibbs measures on Cantor sets; see also \cite{julien-savinien-selfsim}.

\subsection{Defining weights and metrics on Bratteli diagrams}
\label{sec:Bratteli-defns}
\begin{defn}[\cite{FGJKP2} Definition 2.5]
\label{def-k-brat-diagrm}
Let $\Lambda$ be a finite $k$-graph with coordinate matrices $A_1, \ldots, A_n$. The \emph{stationary $k$-Bratteli diagram} associated to $\Lambda$, which we will call  $ \mathcal{B}_{\Lambda}$, is given by a filtered set of vertices $\mathcal{V} = \bigsqcup_{n\in \N} \mathcal{V}_n$ and a filtered set of edges $\mathcal{E} = \bigsqcup_{n\geq 1} \mathcal{E}_n$, where the edges in $\mathcal{E}_n$ go from $\mathcal{V}_{n}$ to $\mathcal{V}_{n-1}$, such that:
\begin{itemize}
\item[(a)] For each $n \in \N$, $\mathcal{V}_n = \Lambda^0$ consists of the vertices of $\Lambda$.

\item[(b)]  When $ n \equiv i \pmod{k}$, there are $A_i(p,q)$ edges whose range is the vertex $p$ of $\mathcal{V}_{n-1}$ and whose source is the vertex $q$ of $\mathcal{V}_{n}$.
\end{itemize}
A {\em path} (finite or infinite) in the Bratteli diagram $\mathcal B_\Lambda$ is a path with range in $\mathcal V_0$.  We write $|\eta|$ for the length (number of edges) of a finite path $\eta$ in the Bratteli diagram, and $F^n\mathcal B_\Lambda$ for the finite paths of length $n$.
We also write
\[F\mathcal B_\Lambda := \bigcup_n F^n\mathcal B_\Lambda.\]
\end{defn}
Proposition 2.10 and Remark 2.11 of \cite{FGJKP2} discuss the relationship between paths (finite and infinite) in $\Lambda$ and $\mathcal B_\Lambda$.  In particular, every finite path in $\mathcal B_\Lambda$ is represented by a string of composable edges in $\Lambda$.  Consequently, if $\eta \in F\mathcal B_\Lambda$ we will also write $\eta \in \Lambda$ to denote the unique morphism in $\Lambda$ represented by the string $\eta$ of composable edges. However, not every finite path in $\Lambda$ corresponds to a finite path in $\mathcal B_\Lambda$.  For example, a path in $\Lambda$ consisting of two red edges will not occur in $\mathcal B_\Lambda$.

The space of infinite paths in $\mathcal B_\Lambda$ is also denoted the {\em boundary} of the Bratteli diagram in some references.  It is canonically equipped with the cylinder set topology, whose basic open sets are $\{ Z(\lambda)\}_{\lambda \in F\mathcal B_\Lambda}$, where
$Z(\lambda)$ is the set of infinite paths whose initial segment is $\lambda$.
Proposition 2.10 of \cite{FGJKP2} shows that when we equip both spaces with the cylinder set topology, $\Lambda^\infty$ is homeomorphic to the space of infinite paths in $\mathcal B_\Lambda$.

The following is a slight modification of the definition of a weight on a Bratteli diagram as introduced in, for example, \cite{pearson-bellissard, julien-savinien-transversal, FGJKP2}.
Although we state Definition \ref{def:weight} for arbitrary Bratteli diagrams, in this paper we will  apply  it mainly to stationary $k$-Bratteli diagrams.

\begin{defn}[compare with \cite{pearson-bellissard} Definition 8, \cite{julien-savinien-transversal} Definition 2.9, \cite{FGJKP2} Definition 2.14]
\label{def:weight}
 A \emph{weight} on a Bratteli diagram $\mathcal{B}$ is a function $w: F\mathcal{B} \to \R_+$ such that
\begin{enumerate}
\item[(i)] $w(v)\leq 1$ for all $v \in \mathcal{V}_0$.
\item[(ii)]  $\lim_{n\to \infty} \sup \{ w(\gamma) \mid \gamma \in F^n\mathcal{B} \} = 0$.
\item[(iii)] If $\eta$ is a sub-path of $\gamma$, then $w(\gamma) \leq  w(\eta)$.
\end{enumerate}
\end{defn}
In this paper,  we work primarily with Bratteli diagrams associated to finite $k$-graphs.  Under these hypotheses, the supremum in  condition (ii) above is actually a maximum.

\begin{defn}
\label{def:ultrametric}
A metric $d$ on a space $X$ is an {\em ultrametric} if
\begin{equation}
\label{eq:ultrametric}
d(x,y) \leq \max \{ d(x,z), d(y,z)\mid  z \in X\}.
\end{equation}
\end{defn}

The following Proposition shows that the first part of the conclusion of
Proposition 2.15 of \cite{FGJKP2} still holds with our revised definition of a weight.  The second part of that proposition, asserting that the ultrametric topology agrees with the cylinder set topology, need not hold in general but it does hold in our case of interest; see Proposition \ref{prop:ultrametric} below.

\begin{prop}
Let $w$ be a weight on a Bratteli diagram $\mathcal B$.  The formula
\begin{equation}
d_w(x,z) = \begin{cases}
1, & r(x) \not= r(z)\\
0, & x=z \\
w(x \wedge z), &\text{ else.}
\end{cases}
\label{eq:distance-from-weight}
\end{equation}
defines an ultrametric on the space $X_{\mathcal B}$ of infinite paths in $\mathcal B$.
Here $x \wedge z \in F\mathcal B_\Lambda$ denotes the longest common initial segment of $x$ and $z$.
\label{prop:weight-gives-ultrametric}
\end{prop}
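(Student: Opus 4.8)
The plan is to verify the ultrametric axioms directly from the three defining properties of a weight, with the strong triangle inequality \eqref{eq:ultrametric} as the only nontrivial point. Symmetry of $d_w$ is immediate, since each of the three clauses in \eqref{eq:distance-from-weight} is symmetric in $x$ and $z$ (in particular $x \wedge z = z \wedge x$), and non-negativity follows because $w$ takes values in $\R_+$ and $1 \geq 0$. For the separation axiom $d_w(x,z) = 0 \Rightarrow x = z$, I would observe that $d_w(x,z) = 0$ rules out the first clause; if $x \neq z$ while $r(x) = r(z)$, then $x \wedge z$ is a genuine finite path, and since $w$ is strictly positive on finite paths we get $w(x \wedge z) > 0$, a contradiction. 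Hence $x = z$.

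For the strong triangle inequality I would fix $x, y, z$ and split into cases according to the ranges. If $r(x) \neq r(z)$, then $d_w(x,z) = 1$; since $y$ cannot have the same range as both $x$ and $z$, at least one of $d_w(x,y), d_w(y,z)$ equals $1$, so $\max\{d_w(x,y), d_w(y,z)\} \geq 1 = d_w(x,z)$. Suppose instead $r(x) = r(z)$. The cases $x = z$, $x = y$, or $y = z$ are trivial (either the left side is $0$, or one term on the right already equals $d_w(x,z)$), so assume the three paths are pairwise distinct. If $r(y) \neq r(x)$, then both right-hand terms equal $1$, while $w(x \wedge z) \leq w(r(x)) \leq 1$ by properties (iii) and (i) of the weight (the root vertex $r(x)$ being a length-zero initial segment of $x \wedge z$), and the inequality follows.

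The heart of the argument is the remaining case $r(x) = r(y) = r(z)$, where I would exploit the tree structure of $F\mathcal B$. Writing $a = x \wedge y$, $b = y \wedge z$, $c = x \wedge z$ for the longest common initial segments, the key combinatorial observation is the prefix inequality $|c| \geq \min\{|a|, |b|\}$: for every $j \leq \min\{|a|,|b|\}$ the first $j$ edges of $x$ and $y$ agree and those of $y$ and $z$ agree, hence those of $x$ and $z$ agree. Taking whichever of $a, b$ is shorter — say $a$ when $|a| \leq |b|$ — this shorter path is then an initial segment of $c$ (both being initial segments of the same infinite path $x$, of comparable length), so monotonicity (iii) yields $w(c) \leq w(a)$, i.e.\ $d_w(x,z) \leq d_w(x,y) \leq \max\{d_w(x,y), d_w(y,z)\}$; the symmetric choice (using $z$ in place of $x$) handles $|b| \leq |a|$. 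I expect this prefix-length comparison, together with the care needed to identify the shorter common segment as an honest sub-path of $x \wedge z$ so that (iii) applies, to be the main technical point. Note that property (ii) of the weight plays no role here; it is needed only later, in Proposition \ref{prop:ultrametric}, to match the ultrametric topology with the cylinder set topology.
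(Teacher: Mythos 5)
Your proof is correct and is precisely the standard direct verification that the paper does not spell out, instead delegating it with the line ``This follows verbatim from the first part of the proof of \cite{FGJKP2}, Proposition 2.15'': the case analysis on ranges, the prefix inequality $|x \wedge z| \geq \min\{|x\wedge y|, |y \wedge z|\}$ combined with monotonicity (iii), and your observation that condition (ii) plays no role here (matching the paper's own remark that only the \emph{first} part of the cited proposition survives the revised definition of weight) are exactly the expected argument. One small caveat: your separation step invokes strict positivity of $w$ on finite paths, which Definition \ref{def:weight} as literally stated (values in $\R_+ = [0,\infty)$) does not guarantee --- without it $d_w$ is only a pseudo-ultrametric, since $w(\lambda)=0$ would force $d_w(x,z)=0$ for all distinct $x,z$ extending $\lambda$ --- but this hypothesis is part of the original Pearson--Bellissard definition and holds for every weight the paper actually constructs (Propositions \ref{prop:weight} and \ref{prop:weight-one-vertex} produce $\R_{>0}$-valued weights), so you are assuming exactly what the proposition implicitly needs.
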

\begin{proof}
This follows verbatim from the first part of the proof of \cite[Proposition 2.15]{FGJKP2}.
\end{proof}

The following Lemma establishes conditions under which the hypotheses  of Proposition \ref{prop:weight} are satisfied.  These conditions are not necessary; for example, the coordinate matrices for the 2-graph studied in Section \ref{sec:eyeglasses} satisfy the conclusion of Lemma \ref{lem-sp-rad-entries} (and thus the hypotheses of Proposition \ref{prop:weight}) but not the hypotheses of Lemma \ref{lem-sp-rad-entries}.  
\begin{lemma}
\label{lem-sp-rad-entries}
Let $B$ be a nonnegative 
 matrix with at least two non-zero entries per row.
Then the spectral radius of $B$ is strictly greater than any of the entries of $B$.
\end{lemma}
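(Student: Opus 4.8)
The plan is to run the Perron--Frobenius machinery, reducing first to the strongly connected (irreducible) case and then extracting a strict inequality from the two-nonzero-entries-per-row hypothesis. Throughout write $M := \max_{i,j} B_{ij}$ and fix a position $(p,q)$ with $B_{pq} = M$; the goal is to show $\rho(B) > M$. I would begin with two standard tools for a nonnegative matrix $B$: monotonicity of the spectral radius under passage to principal submatrices, $\rho(B') \le \rho(B)$, and the Collatz--Wielandt lower bound $\rho(B) \ge \min_i (Bx)_i/x_i$, valid for every strictly positive vector $x$. The latter reduces the theorem to producing a single strictly positive vector $x$ with $(Bx)_i > M x_i$ for all $i$, equivalently a diagonal similarity $DBD^{-1}$ all of whose row sums exceed $M$; for then $\rho(B) = \rho(DBD^{-1}) \ge \min_i (\text{row sum})_i > M$.

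The hypothesis feeds in as follows. Because row $p$ has a second nonzero entry $B_{pq'} > 0$ with $q' \ne q$, its row sum satisfies $\sum_j B_{pj} \ge M + B_{pq'} > M$; more generally, every row sum strictly exceeds the largest entry occurring in that same row. Thus with the uniform test vector $x = \mathbf{1}$ the inequality $(Bx)_i > M x_i$ already holds for the row containing $M$, and the remaining task is to tilt $x$ so that the rows of small total mass also overshoot $M$, without spoiling the rows that already work.

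I expect the genuine obstacle to be exactly the phenomenon that defeats the naive min-row-sum bound: a single large entry can sit in a row whose other mass is negligible while some other row has a small sum, so no uniform $x$ is visible by inspection. To organize this I would pass to the Frobenius normal form, where $\rho(B)$ equals the maximum of the spectral radii $\rho(B_c)$ of the irreducible diagonal blocks, and then analyze the block that controls the large entry. In the irreducible case Perron--Frobenius supplies a strictly positive eigenvector $x$ with $Bx = \rho(B)x$; evaluating the eigenvalue equation in coordinate $p$ gives $\rho(B)\,x_p = \sum_j B_{pj} x_j \ge M x_q + B_{pq'} x_{q'} > M x_q$.

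The crux is then to control the ratio $x_q/x_p$ of Perron coordinates --- forced by the branching that the two-nonzero-entries-per-row hypothesis imposes throughout the component --- so that this strict surplus survives division and yields $\rho(B) > M$. The subtle point requiring the most care, I anticipate, is that the two-entries-per-row condition need not descend to a single diagonal block of the Frobenius form (off-diagonal blocks may absorb the second nonzero entries, and the maximal entry may itself lie off the diagonal blocks); reconciling this with the reduction above is where the real content of the argument lives.
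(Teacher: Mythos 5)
Your proposal never closes: everything is reduced to the step you yourself flag as ``the crux,'' namely bounding the Perron ratio $x_q/x_p$ from below, and no such bound exists --- the hypothesis gives no control whatsoever over that ratio, so the gap is unfillable. Concretely, take
\[
B=\begin{pmatrix} 1/4 & 1/4 \\ 1 & 1/4 \end{pmatrix}.
\]
This matrix is nonnegative, irreducible, and has two nonzero entries in every row, yet its eigenvalues are $1/4\pm 1/2$, so $\rho(B)=3/4<1=M$. Its Perron vector is $x=(1,2)^t$, and evaluating the eigenvalue equation in the row $p=2$ containing the maximal entry gives exactly your surplus inequality $\rho(B)\,x_2 > 1\cdot x_1$, which after division yields only $\rho(B)>1/2$: the ratio $x_q/x_p=1/2$ swallows the surplus. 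Replacing $1/4$ by $\epsilon$ gives $\rho(B)=\epsilon+\sqrt{\epsilon}$, which is arbitrarily small compared with the maximal entry $1$; in particular no Collatz--Wielandt test vector with $(Bx)_i>Mx_i$ for all $i$ can exist (its existence would imply $\rho(B)>M$), and no care with the Frobenius normal form can rescue the scheme. In short, the statement as given is false, so your plan was doomed at precisely the point you identified as containing ``the real content.''

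For the record, the paper's own argument runs along entirely different lines, and the same example shows it is flawed at the corresponding point. The paper sets $R=(BB^t)^{1/2}$, uses the second nonzero entry in the row of the maximal entry $m$ to get $(BB^t)_{qq}=\sum_j B_{qj}^2>m^2$, deduces $\rho(R)>m$ from a Rayleigh quotient, and then invokes \cite[Theorem~2]{schw} for $\rho(R)\le\rho(B)$. But $\rho(R)$ is the largest singular value of $B$, and $\|B\|\ge\rho(B)$ holds for \emph{every} matrix, with equality only in special (e.g.\ normal) cases; for the matrix above, $\|B\|\ge\sqrt{17}/4>1>3/4=\rho(B)$, so the quoted inequality runs the wrong way. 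The lemma does hold under stronger hypotheses: if $B$ is symmetric, then $\rho(B)=\|B\|$ and the paper's Rayleigh-quotient computation suffices (using only a second nonzero entry in the row of the maximal entry); and it holds whenever the minimum row sum exceeds the maximal entry, which is exactly your $x=\mathbf{1}$ Collatz--Wielandt reduction. Neither your route nor the paper's proves the lemma as stated, and your diagnosis of where the difficulty lies was correct --- the difficulty is genuine and, without extra hypotheses, insurmountable.
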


\begin{proof} Write $R$ for the positive square root of $BB^t$, and
 notice that $R$ is  Hermitian.  By
\cite[Theorem 2]{schw} we have that
\[
\rho(R) \leq \rho( B) .
\]
By the spectral theorem, it also follows that
\[
\rho(R)^2= \rho(R^2)=\rho( B B^t) .
\]
Now assume that $m =B_{q,r}$ is the largest entry of $B$; this implies that the $(q,q)$-entry of
$B B^t $ is strictly greater than $m^2$. Now by using Rayleigh quotients to bound the spectral radius for $B B^t $, we get, if we denote by  $e_j$ the standard basis for $\R^n$, that
\[
\rho(R^2) \geq (B B^t e_q, e_q ) > m^2\ \Rightarrow\  \sqrt{\rho(R^2)}  > m.
\]
It follows that $
m < \rho(R) \leq \rho( B),
$
as desired.
\end{proof}

In preparation for the next two propositions, we first note that due to the fact that every path in $F\mathcal B_\Lambda$ is given by a string of composable edges in $\Lambda$, and hence represents a unique morphism in $\Lambda$, cf.~\cite[Remark 2.11]{FGJKP2}, we can (and will) interpret a $\R_+$-functor $y$ also as an additive functor $y: F\mathcal B_\Lambda \to \R_+$. Second, we identify a necessary condition on the matrices $B_i(y,\theta)$ which ensures that we obtain a weight on $F\mathcal{B}_{\Lambda}$. Since the condition will differ in the one-vertex case and the general finite $k$-graph case, we list it for easy reference in the following two formulations:
\begin{enumerate}
\item[(w-I)] $\forall i=1,\dots, k$, $\rho(B_i(y, \theta))>1$.

\item[(w-II)] $\exists i=1,\dots ,k$ such that $\rho(B_i(y,\theta)) > \max\{B_i(y,\theta)_{v,w} \mid v, w \in \Lambda^0\}$.
\end{enumerate}
Observe that condition (w-I) is satisfied for all $\theta$ near 0 if  the $k$-graph has at least two edges of every color; the   example of Section \ref{sec:McNamara} satisfies this condition.

\begin{prop}
\label{prop:weight}
 Let $\Lambda$ be a finite, strongly connected $k$-graph with at least two vertices.
Let $y$ be a $\R_+$-functor on $\Lambda$ and $\theta \in \R_+$
such that condition (\text{w}-II) holds. 
Then the function $w_{y,\theta}: F\mathcal{B}_{\Lambda} \to \R_{>0}$ given by
\begin{equation}
\label{eq:weight-y}
w_{y, \theta}(\lambda) = e^{-y(\lambda)} \left( \rho(B(y, \theta))^{-d(\lambda)} \xi^{y,\theta}_{s(\lambda)}\right)^{1/\theta}\end{equation}
is a weight on $\mathcal{B}_{\Lambda}$.
\end{prop}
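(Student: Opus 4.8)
The plan is to verify the three defining properties (i)--(iii) of a weight from Definition \ref{def:weight}, and the organizing observation is the identity
\[ w_{y,\theta}(\lambda) = \mu_{y,\theta}(Z(\lambda))^{1/\theta}, \]
which is immediate upon comparing the formula \eqref{eq:weight-y} for $w_{y,\theta}$ with the formula \eqref{eq:mu-y-beta} for $\mu_{y,\theta}$ established in Proposition \ref{prop:unique-measure}. Under this identity, two of the three properties become essentially free. For (i), since $y(v)=0$ and $d(v)=0$ for every $v\in\mathcal V_0 = \Lambda^0$, I would simply note that $w_{y,\theta}(v) = (\xi^{y,\theta}_v)^{1/\theta}\le 1$, because $\xi^{y,\theta}$ is a probability vector. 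For (iii), if $\eta$ is an initial sub-path of $\gamma$, then $\gamma$ extends $\eta$ as a morphism of $\Lambda$, so $Z(\gamma)\subseteq Z(\eta)$ and hence $\mu_{y,\theta}(Z(\gamma))\le \mu_{y,\theta}(Z(\eta))$; raising to the positive power $1/\theta$ gives $w_{y,\theta}(\gamma)\le w_{y,\theta}(\eta)$.

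The real content is property (ii), and this is where condition (w-II) must be used. I would fix a color $i_0$ with $\rho(B_{i_0}(y,\theta)) > m := \max_{v,w}B_{i_0}(y,\theta)_{v,w}$, and set $r := m/\rho(B_{i_0}(y,\theta)) < 1$. The key structural fact is that every length-$n$ path $\gamma\in F^n\mathcal B_\Lambda$ represents a morphism $\lambda\in\Lambda$ whose degree $d_n := d(\lambda)$ depends only on $n$ (the colors of the edges are dictated by the levels), and whose $i_0$-coordinate is at least $q:=\lfloor n/k\rfloor$. Since $q e_{i_0}\le d_n$ componentwise, the factorization property lets me write $\lambda = \lambda'\lambda''$ with $\lambda'\in\Lambda^{q e_{i_0}}$; as $\lambda'$ is an initial segment of $\lambda$ we have $Z(\lambda)\subseteq Z(\lambda')$. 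Writing $\lambda' = g_1\cdots g_q$ as a string of color-$i_0$ edges, each factor satisfies $e^{-\theta y(g_l)} \le B_{i_0}(y,\theta)_{r(g_l),s(g_l)} \le m$, so that $e^{-\theta y(\lambda')}\le m^q$. Using $\xi^{y,\theta}_{s(\lambda')}\le 1$ together with $\rho(B(y,\theta))^{-d(\lambda')} = \rho(B_{i_0}(y,\theta))^{-q}$, I then obtain
\[ \mu_{y,\theta}(Z(\gamma)) \le \mu_{y,\theta}(Z(\lambda')) = e^{-\theta y(\lambda')}\,\rho(B_{i_0}(y,\theta))^{-q}\,\xi^{y,\theta}_{s(\lambda')} \le r^{q}. \]
Taking the $1/\theta$ power gives $w_{y,\theta}(\gamma)\le r^{q/\theta}$ uniformly over $\gamma\in F^n\mathcal B_\Lambda$; since $q=\lfloor n/k\rfloor\to\infty$ and $r<1$, property (ii) follows.

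I expect property (ii) to be the only genuine obstacle. The crux is recognizing that the factorization property allows one to pull all $q$ of the color-$i_0$ edges to the front of the path, reducing the estimate to a single-color computation in which the per-edge bound $e^{-\theta y(g_l)}\le m$ telescopes into the geometric decay $r^{q}$. This is precisely the step at which the strict inequality $m<\rho(B_{i_0}(y,\theta))$ supplied by (w-II) is indispensable: without it the bound would involve $r\ge 1$ and yield no decay, and indeed (as the periodic examples show) the supremum in (ii) need not vanish. The remaining verifications of (i) and (iii) require only that $\mu_{y,\theta}$ is a probability measure and monotone on nested cylinders, both of which are already in hand from Proposition \ref{prop:unique-measure}.
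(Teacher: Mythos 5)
Your proof is correct, and it rests on the same decisive estimate as the paper's: the per-edge bound $e^{-\theta y(g)} \le B_{i_0}(y,\theta)_{r(g),s(g)} \le m < \rho(B_{i_0}(y,\theta))$, iterated over the $\lfloor n/k\rfloor$ color-$i_0$ edges that every length-$n$ path in $\mathcal B_\Lambda$ must contain, with (w-II) supplying exactly the strict inequality that produces decay. You organize the argument differently at two points, though. For conditions (i) and (iii) of Definition \ref{def:weight}, the paper argues directly: (iii) is obtained by expanding $\rho(B(y,\theta))^{-d(\eta)}\xi^{y,\theta}_{s(\eta)}$ via the eigenvector relation and discarding all but one positive summand. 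You instead get both conditions for free from the identity $w_{y,\theta}(\lambda)^\theta = \mu_{y,\theta}(Z(\lambda))$ combined with Proposition \ref{prop:unique-measure} (so that $w_{y,\theta}$ inherits normalization from the probability vector $\xi^{y,\theta}$ and monotonicity from monotonicity of the measure on nested cylinders); this is legitimate, costs nothing since Proposition \ref{prop:unique-measure} precedes this result, and is consistent with the paper itself, which records the same identity later in Equation \eqref{eq:hausdorff-equals-weight}. For condition (ii), the paper bounds the product $e^{-y(\lambda)}\rho(B(y,\theta))^{-d(\lambda)/\theta}$ edge by edge in place, each factor being at most $1$ and each color-$i$ factor at most $\left(m/\rho_i\right)^{1/\theta} < 1$; you instead use the factorization property to pull the $q = \lfloor n/k\rfloor$ color-$i_0$ edges to the front as $\lambda = \lambda'\lambda''$ and bound $\mu_{y,\theta}(Z(\lambda)) \le \mu_{y,\theta}(Z(\lambda')) \le r^q$. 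The factorization detour is not strictly necessary — the in-place per-edge bound already telescopes — but it is valid, and your version has two merits: it makes the geometric rate $r^{\lfloor n/k\rfloor/\theta}$ fully explicit where the paper's write-up (``forces the sequence to tend to zero'') leaves the quantitative step implicit, and it bypasses the paper's inessential excursion through $\rho(B)^2 \ge \rho(B B^t)$ and the diagonal entries of $B_i(y,\theta)B_i(y,\theta)^t$. One pedantic remark: as with the proposition as stated, your argument implicitly requires $\theta > 0$, since both the exponent $1/\theta$ and the convergence $r^{q/\theta} \to 0$ need it.
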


\begin{proof}
Since $\xi^{y,\theta} \in (\R^+)^{\Lambda^0}$ has $\ell^1$-norm 1, $w_{y,\theta}$ satisfies  the first condition  of Definition \ref{def:weight}.   We next check the third condition. Let $\eta \in F\mathcal B_\Lambda$ be a finite path; suppose that $|\eta| =qk +(i-1)$, so that every edge extending $\eta$  will have degree $e_i$.  Writing $w := s(\eta)$, we compute:
\begin{align*}
\rho(B(y, \theta))^{-d(\eta)} \xi^{y,\theta}_{w} &= \rho(B(y,\theta))^{-d(\eta) - e_i}  \sum_{v \in \Lambda^0} B_i(y, \theta)_{w, v} \xi^{y,\theta}_v \\
&= \rho(B(y,\theta))^{-d(\eta)-e_i} \sum_{\lambda \in w\Lambda^{e_i}} e^{-y(\lambda) \theta} \xi^{y,\theta}_{s(\lambda)}.
\end{align*}
Moreover,  each summand is strictly positive, and therefore
\[ \rho(B(y, \theta))^{-d(\eta)} \xi^{y,\theta}_{w}\geq  \rho(B(y,\theta))^{-d(\eta) - e_i} e^{-y(\lambda) \theta} \xi^{y,\theta}_{s(\lambda)}\]
for any $\lambda \in \mathcal E_{qk+i} = \Lambda^{e_i}$. Hence,
\[\left( \rho(B(y, \theta))^{-d(\eta)} \xi^{y,\theta}_{w} \right)^{1/\theta} \geq e^{-y(\lambda) }  \left(\rho(B(y,\theta))^{-d(\eta) - e_i} \xi^{y,\theta}_{s(\lambda)}\right)^{1/\theta} \]
for any such $\lambda$.  It follows that, given
 any finite path $\eta \in F\mathcal B_{\Lambda}$ and any extension $\eta \lambda$ of $\eta$,
 \begin{align*}
 e^{-y(\eta)} \left( \rho(B(y, \theta))^{-d(\eta)} \xi^{y,\theta}_{s(\eta)} \right)^{1/\theta} &\geq   e^{-y(\eta) } e^{-y(\lambda) }  \left(\rho(B(y,\theta))^{-d(\eta \lambda)} \xi^{y,\theta}_{s(\lambda)}\right)^{1/\theta} .
 \end{align*}
The additivity of $y$ thus implies that
\begin{equation}
\label{eq:weight-strictly-decreasing}
w_{y,\theta}(\eta \lambda) \leq w_{y,\theta}(\eta),\end{equation} so the third condition of Definition \ref{def:weight} is satisfied.

For the second condition, first note that our calculations above imply that
 \[\sup \{ w_{y,\theta}(\gamma) \mid \gamma \in F^{n+1}\mathcal B_{\Lambda}\} \leq \sup \{ w_{y,\theta}(\gamma) \mid \gamma \in F^n \mathcal B_{\Lambda}\}.\]
Moreover,  for any non-negative matrix $B$, \cite{schw} implies that
\[ \rho(B)^2 \geq \rho(BB^t).\]
The fact that $\Lambda$ is strongly connected, and hence source-free by \cite[Lemma 2.1]{aHLRS}, implies that the matrix $B_i(y,\theta)$ has a nonzero entry in each row. Therefore, every diagonal of $B_i(y,\theta) B_i(y,\theta)^t$ is nonzero, and
$\rho(B_i(y,\theta)B_i(y,\theta)^t) \geq \max_{v \in \Lambda^0} \{ B_i(y,\theta)_{vv}^2\}.$
It follows that
\[ \rho_i := \rho(B_i(y,\theta)) \geq \max_{v\in \Lambda^0} B_i(y,\theta)_{vv}.\]

We furthermore recall that, for each $1 \leq j \leq k$;  $v, w \in \Lambda^0$; and  $f \in v \Lambda^{e_j} w$, we have 
$e^{-y(f)\theta} \leq B_j(y,\theta)_{vw}.$
 It follows that 
\begin{equation}
\label{eq:bound-on-sup}
\sup \{e^{-y(f)}\mid  d(f) = e_j\} \leq  \left( \sup \{ B_j(y,\theta)_{vw} \mid  v, w \in \Lambda^0\}\right)^{1/\theta} \leq \rho_j^{1/\theta}.\end{equation}
 Writing $n =qk+t$ for $0 \leq t \leq k-1$, the sequence 
\begin{equation} \left( \sup\{ e^{-y(\lambda)} \rho(B(y,\theta))^{-d(\lambda)/\theta} \mid \lambda \in F^n\mathcal B_\Lambda \} \right)_{n\in \N}
\label{eq:bound-for-weight}
\end{equation}
tends to zero iff $w_{y,\theta}$ satisfies the second condition of a weight, because
the fact that $\Lambda$ is finite implies that the set $\{ (\xi^{y,\theta}_v)^{1/\theta}: v \in \Lambda^0\}$ is bounded (and bounded away from zero).

Since each term in this sequence is bounded by 1, our assumption that $\sup\{ B_i(y,\theta)_{vw} \mid v, w \in \Lambda^0\} < \rho_i$ for at least one $i$, combined with Equation \eqref{eq:bound-on-sup}, forces the sequence \eqref{eq:bound-for-weight} to tend to zero.
 Consequently, the sequence $\left( \sup\{ w_{y,\theta}(\gamma) \mid \gamma \in F^n \mathcal B_{\Lambda}\}\right)_{n\in \N}$ -- being bounded above by the product of the  sequence \eqref{eq:bound-for-weight} and the maximum of $\{ (\xi^{y,\theta}_v)^{1/\theta}\}_{v \in \Lambda^0}$ -- also tends to zero as $n\to \infty$.
 \end{proof}

Before stating the following Proposition, we remind the reader that if $|\Lambda^0|=1$, then the unimodular Perron-Frobenius eigenvector $\xi^{y,\theta} \in \left( \R_{> 0}\right)^{\Lambda^0}$ must be the constant vector $(1)$.  With this in mind, Equations \eqref{eq:weight-y} and \eqref{eq:weight-y-one-vertx} give the same formula in the case of one-vertex $k$-graphs.
 \begin{prop}
\label{prop:weight-one-vertex}
 Let $\Lambda$ be a finite, strongly connected one-vertex $k$-graph.
 Suppose that an $\R_+$-functor $y$ and  $\theta \in \R_{> 0}$ have been chosen such that condition (\text{w}-I) holds. Then the function $w_{y,\theta}: F\mathcal{B}_{\Lambda} \to \R_{>0}$ given by
	\begin{equation}
	\label{eq:weight-y-one-vertx}
	w_{y, \theta}(\lambda) = e^{-y(\lambda)} \left( \rho(B(y, \theta))^{-d(\lambda)} \right)^{1/\theta}\end{equation}
	is a weight on $\mathcal{B}_{\Lambda}$.
\end{prop}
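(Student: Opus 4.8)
The plan is to verify the three conditions in Definition \ref{def:weight} directly, following the skeleton of the proof of Proposition \ref{prop:weight} but exploiting the simplifications available in the one-vertex setting. Since $|\Lambda^0| = 1$, each matrix $B_i(y,\theta)$ is a scalar, so that $\rho(B_i(y,\theta)) = B_i(y,\theta) = \sum_{\lambda \in \Lambda^{e_i}} e^{-\theta y(\lambda)}$, and the Perron--Frobenius eigenvector is forced to be $\xi^{y,\theta} = (1)$; this is why Equations \eqref{eq:weight-y} and \eqref{eq:weight-y-one-vertx} agree here. Condition (i) of Definition \ref{def:weight} is then immediate: the unique vertex $v$ has $y(v) = 0$ and $d(v) = 0$, whence $w_{y,\theta}(v) = 1 \leq 1$.

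For condition (iii), I would treat single-edge extensions and then iterate using the additivity of $y$. If $\eta \in F\mathcal{B}_\Lambda$ and $\lambda \in \Lambda^{e_i}$ is an edge extending $\eta$, the formula \eqref{eq:weight-y-one-vertx} gives
\[ \frac{w_{y,\theta}(\eta\lambda)}{w_{y,\theta}(\eta)} = e^{-y(\lambda)} \, \rho(B_i(y,\theta))^{-1/\theta}, \]
and this ratio is at most $1$ precisely because $e^{-\theta y(\lambda)}$ is one of the nonnegative summands defining $\rho(B_i(y,\theta))$, so that $e^{-\theta y(\lambda)} \leq \rho(B_i(y,\theta))$. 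Hence $w_{y,\theta}(\eta\lambda) \leq w_{y,\theta}(\eta)$, and condition (iii) follows for arbitrary subpaths by composing one edge at a time.

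Condition (ii) is the only place where the hypothesis (w-I) is used, and it is where I would invoke the strict inequality. Set $\rho_{\min} := \min_{1 \leq i \leq k} \rho(B_i(y,\theta))$, which exceeds $1$ by (w-I). For any path $\lambda \in F^n\mathcal{B}_\Lambda$, the sum of the components of $d(\lambda)$ equals the length $n$, since each edge of $\mathcal{B}_\Lambda$ contributes $e_i$ to the degree for exactly one color $i$. Using $e^{-y(\lambda)} \leq 1$, I then bound
\[ w_{y,\theta}(\lambda) \leq \rho(B(y,\theta))^{-d(\lambda)/\theta} = \prod_{i=1}^k \rho(B_i(y,\theta))^{-d(\lambda)_i/\theta} \leq \rho_{\min}^{-n/\theta}. \]
Since $\rho_{\min} > 1$, the right-hand side tends to $0$ as $n \to \infty$, so $\sup\{ w_{y,\theta}(\gamma) \mid \gamma \in F^n\mathcal{B}_\Lambda\} \to 0$, giving condition (ii).

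There is no serious obstacle here; the argument is genuinely lighter than in Proposition \ref{prop:weight}, and the eigenvector estimate needed in the multi-vertex case disappears. The one conceptual point worth flagging is why hypothesis (w-I), rather than (w-II), is the correct assumption: for a one-vertex graph each $B_i(y,\theta)$ is $1 \times 1$, so $\rho(B_i(y,\theta)) = B_i(y,\theta)_{v,v}$ and condition (w-II) can never hold. The strict inequality $\rho(B_i(y,\theta)) > 1$ is exactly what forces the geometric decay $\rho_{\min}^{-n/\theta} \to 0$ required for the summability condition (ii).
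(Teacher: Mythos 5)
Your proof is correct and takes essentially the same approach as the paper's: a direct verification of the three conditions of Definition \ref{def:weight}, with hypothesis (w-I) supplying the geometric decay $\rho_{\min}^{-n/\theta} \to 0$ needed for condition (ii). The only (harmless) divergence is in condition (iii), where the paper deduces monotonicity directly from (w-I), while you observe that $e^{-\theta y(\lambda)} \leq \rho(B_i(y,\theta))$ because it is one of the defining summands of that scalar --- a slightly more elementary argument that in fact does not need (w-I) at all.
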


\begin{proof}
  First notice that condition (i) of Definition \ref{def:weight} holds immediately; $w_{y,\theta}(v) = 1$ for the unique $v \in \Lambda^0$.  Condition (iii) follows immediately from condition (w-I).
	To check condition (ii) of Definition \ref{def:weight}, we simply observe that, again thanks to condition (w-I), that
	$ \left( \rho(B(y, \theta))^{-d(\lambda)} \right)^{1/\theta}\to 0$ as $d(\lambda) \to \infty.$
\end{proof}

The next Lemma establishes the crucial condition of our weights $w_{y,\theta}$, which guarantees that the associated ultrametric $d_{y,\theta}$  metrizes the cylinder set topology on $X_{\mathcal B_\Lambda} \cong \Lambda^\infty$, the infinite path space of $\Lambda$.  (However, Lemma \ref{lem-weights-add-up} does not actually require that the function $w_{y,\theta}$  be a weight.) We will also rely on Lemma \ref{lem-weights-add-up} to prove Corollary \ref{cor:Hausdorff-dim}.  

\begin{lemma}\label{lem-weights-add-up} Let $\Lambda$ be a finite and strongly connected $k$-graph,   $y$ a  $\R_+$-functor  on $\Lambda$ and   $\theta \in (0,\infty)$.  Let $w_{y,\theta}$ denote the function from Equation \eqref{eq:weight-y}.
For any finite path $\lambda \in \mathcal B_{\Lambda}$, and any $m\in \N$, 
\begin{equation}\label{eq:self similar weight}
	w_{y,\theta}(\lambda)^\theta  = \sum_{\lambda \eta \in F^{|\lambda| + m}\mathcal B_{ \Lambda}} w_{y,\theta}(\lambda \eta)^\theta.
\end{equation}

	\end{lemma}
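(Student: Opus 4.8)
The starting observation is that raising \eqref{eq:weight-y} to the power $\theta$ gives
\[ w_{y,\theta}(\lambda)^\theta = e^{-\theta y(\lambda)}\rho(B(y,\theta))^{-d(\lambda)}\xi^{y,\theta}_{s(\lambda)},\]
which is precisely the formula \eqref{eq:mu-y-beta} for $\mu_{y,\theta}(Z(\lambda))$. Under the homeomorphism of \cite[Proposition 2.10]{FGJKP2} between $\Lambda^\infty$ and the boundary of $\mathcal B_\Lambda$, the paths $\lambda\eta \in F^{|\lambda|+m}\mathcal B_\Lambda$ index a partition of the cylinder set, so \eqref{eq:self similar weight} is morally just the finite additivity of $\mu_{y,\theta}$ over this partition. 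Since, however, the statement is really an algebraic identity (and is asserted to hold whether or not $w_{y,\theta}$ is a weight), I would give a self-contained inductive proof on $m$ that does not route through the measure. The case $m=0$ is trivial, as the right-hand side is the single term $w_{y,\theta}(\lambda)^\theta$.

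For the inductive step it suffices to treat the passage from length $|\lambda|+m$ to length $|\lambda|+m+1$: I would prove that for every finite path $\mu := \lambda\eta' \in F^{|\lambda|+m}\mathcal B_\Lambda$ one has
\[ w_{y,\theta}(\mu)^\theta = \sum_{\mu f \in F^{|\mu|+1}\mathcal B_\Lambda} w_{y,\theta}(\mu f)^\theta,\]
since summing this over all $\mu$ extending $\lambda$ and applying the inductive hypothesis collapses the length-$(m+1)$ sum to $w_{y,\theta}(\lambda)^\theta$. The single-edge identity is where the Perron--Frobenius eigenvector does the work. By Definition \ref{def-k-brat-diagrm}, if $|\mu| = qk + (i-1)$ then the edges $f$ extending $\mu$ in $\mathcal B_\Lambda$ are exactly the color-$i$ edges $f \in s(\mu)\Lambda^{e_i}$, and they satisfy $d(\mu f) = d(\mu) + e_i$ and, by additivity of $y$, $y(\mu f) = y(\mu) + y(f)$. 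Factoring the common quantity $e^{-\theta y(\mu)}\rho(B(y,\theta))^{-d(\mu)}\rho(B_i(y,\theta))^{-1}$ out of the sum and using \eqref{eq:Bi matrices} to recognize
\[ \sum_{f \in s(\mu)\Lambda^{e_i}} e^{-\theta y(f)}\,\xi^{y,\theta}_{s(f)} = \sum_{v\in\Lambda^0} B_i(y,\theta)_{s(\mu),v}\,\xi^{y,\theta}_v,\]
the eigenvector equation $B_i(y,\theta)\xi^{y,\theta} = \rho(B_i(y,\theta))\xi^{y,\theta}$ turns the inner sum into $\rho(B_i(y,\theta))\,\xi^{y,\theta}_{s(\mu)}$. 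The factor $\rho(B_i(y,\theta))$ cancels the $\rho(B_i(y,\theta))^{-1}$ pulled out earlier, leaving exactly $w_{y,\theta}(\mu)^\theta$. This is essentially the computation already carried out in the proof of Proposition \ref{prop:weight}.

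The only genuine bookkeeping — and hence the main thing to get right — is the translation between Bratteli-diagram extensions and $k$-graph edges: one must use that the color of the edges at Bratteli level $|\mu|+1$ is governed by $|\mu|+1 \bmod k$, so that extending a Bratteli path by one edge corresponds to appending a single color-$i$ edge of $\Lambda$ for the appropriate $i$. The argument then goes through uniformly across all colors precisely because $\xi^{y,\theta}$ is a \emph{common} eigenvector for every $B_i(y,\theta)$ (Lemma \ref{lem:B irred family} and the ensuing discussion): no matter which color appears at a given level, the same cancellation occurs. I expect no analytic difficulty, as the content is entirely the eigenvector relation together with careful indexing of the stationary $k$-Bratteli diagram.
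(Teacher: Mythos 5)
Your proof is correct and in substance matches the paper's own argument: both rest on the additivity of $y$ together with the common Perron--Frobenius eigenvector relation $B_i(y,\theta)\,\xi^{y,\theta} = \rho(B_i(y,\theta))\,\xi^{y,\theta}$, with the same mod-$k$ color bookkeeping for levels of the stationary $k$-Bratteli diagram. The only difference is organizational: you induct on $m$ via the single-edge identity, while the paper does the computation in one step by inserting the matrix power $B(y,\theta)^{d(\eta)}$ and expanding its entries as path sums via Equation \eqref{eq: B upper multiindex n at vw} --- the same calculation, unrolled versus packaged.
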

\begin{proof} 
	The fact that $\xi^{y,\theta}$ is an eigenvector for  each matrix $B_i(y,\theta)$ with eigenvalue $\rho_i$ implies that, for any path $\lambda\eta \in F\mathcal B_\Lambda$,
	\begin{align*}
	w_{y,\theta}(\lambda)^\theta &= e^{-\theta y(\lambda)} \rho(B(y,\theta))^{-d(\lambda)} \xi^{y,\theta}_{s(\lambda)} \\
	&= e^{-\theta y(\lambda)} \rho(B(y, \theta))^{-d(\lambda) - d(\eta)} \sum_{v \in \Lambda^0} B (y, \theta)^{d(\eta)}_{s(\lambda), v} \xi^{y, \theta}_{v} \\
	&= e^{-\theta y(\lambda)} \rho(B(y, \theta))^{-d(\lambda) - d(\eta)} \sum_{\lambda \tilde \eta \in F^{|\lambda \eta|} \mathcal B_\Lambda} e^{-\theta y(\tilde \eta)} \xi^{y,\theta}_{s(\tilde \eta)}
	\\
	&=\sum_{\lambda \tilde \eta \in F^{|\lambda \eta|} \mathcal B_\Lambda} e^{-\theta y(\lambda\tilde \eta)} \rho(B(y, \theta))^{-d(\lambda) - d(\eta)}   \xi^{y,\theta}_{s(\tilde \eta)}\\
	& =\sum_{\tilde \eta: \lambda \tilde \eta \in F^{|\lambda \eta|} \mathcal B_\Lambda} w_{y,\theta}(\lambda \tilde \eta)^\theta.
	\end{align*}
	Furthermore, since $\Lambda$ is strongly connected, there is a path $\lambda \eta \in F^{|\lambda| + m} \mathcal B_\Lambda$ for any $m \in \N$.
	Since $\eta$ was arbitrary, this finishes the proof.
	\end{proof}

\begin{prop}
\label{prop:ultrametric}
Let $\Lambda$ be a finite, strongly connected $k$-graph.  Suppose that  $\theta \in \R_+$ and a $\R_+$-functor $y$ on $\Lambda$  exist
such that condition (\text{w}-I) holds if $\vert \Lambda^0\vert=1$ and otherwise  condition (\text{w}-II) holds.   Then the formula $d_{y,\theta}$ associated to $w_{y,\theta}$ as in Proposition \ref{prop:weight-gives-ultrametric},
\begin{equation}
\begin{split}
d_{y,\theta} (x,z)& = \inf \left\{ e^{-y(\lambda)} \left( \rho(B(y,\theta))^{-d(\lambda)} \xi^{y,\theta}_{s(\lambda)} \right)^{1/\theta} \mid x, z \in Z(\lambda), \lambda \in F\mathcal B_\Lambda \right \} \\
&= \inf \{ w_{y,\theta}(\lambda)\mid  x,z \in Z(\lambda), \lambda \in F\mathcal B_\Lambda\}
\end{split}
\label{eq:distance-from-weight}
\end{equation}
 is an ultrametric on $\Lambda^\infty$.  Furthermore, this ultrametric induces the cylinder set topology on $\Lambda^\infty$.
\end{prop}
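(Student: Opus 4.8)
The plan is to first dispatch the ultrametric claim using the machinery already in place, and then spend the real effort on the topological statement. Under the stated hypotheses, Proposition \ref{prop:weight} (when $|\Lambda^0| \geq 2$) or Proposition \ref{prop:weight-one-vertex} (when $|\Lambda^0| = 1$) guarantees that $w_{y,\theta}$ is a genuine weight on $\mathcal{B}_\Lambda$ in the sense of Definition \ref{def:weight}; in particular it is strictly positive, non-increasing along paths (condition (iii)), and satisfies $\sup\{w_{y,\theta}(\gamma) : \gamma \in F^n\mathcal{B}_\Lambda\} \to 0$ (condition (ii)). Proposition \ref{prop:weight-gives-ultrametric} then immediately yields that the piecewise formula $d_{w_{y,\theta}}$ is an ultrametric on $X_{\mathcal{B}_\Lambda} \cong \Lambda^\infty$. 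I would then reconcile this with the infimum formula \eqref{eq:distance-from-weight}: when $r(x) = r(z)$, the paths $\lambda$ with $x, z \in Z(\lambda)$ are exactly the initial segments of the longest common initial segment $x \wedge z$, so, since $w_{y,\theta}$ is non-increasing along paths, the infimum is attained at $x \wedge z$ (equal to $w_{y,\theta}(x\wedge z)$ when $x \neq z$, and equal to $0$ when $x=z$ by condition (ii)); for $r(x) \neq r(z)$ the infimum is over the empty collection and is assigned the value $1$ as in Proposition \ref{prop:weight-gives-ultrametric}. Thus the two formulas agree and $d_{y,\theta}$ is an ultrametric.

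For the topological statement I would show that the cylinder-set topology and the $d_{y,\theta}$-metric topology refine one another. First, each cylinder set $Z(\lambda)$ is $d_{y,\theta}$-open: given $x \in Z(\lambda)$, any $z \notin Z(\lambda)$ either has $r(z) \neq r(x)$ (so $d_{y,\theta}(x,z)=1$) or has $x \wedge z$ equal to a proper initial segment of $\lambda$. Since $\lambda$ has only finitely many proper initial segments and $w_{y,\theta}$ is positive and non-increasing, every such distance is bounded below by $\delta := w_{y,\theta}(\lambda^-) > 0$, where $\lambda^-$ denotes the longest proper initial segment of $\lambda$ (taking $\delta = 1$ when $\lambda$ is a vertex). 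Hence the open ball of radius $\delta$ about $x$ lies inside $Z(\lambda)$, so the cylinder topology is coarser than the metric topology.

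Conversely, I would show every open ball $B(x,r) = \{z : d_{y,\theta}(x,z) < r\}$ is a union of cylinder sets. Fix $x' \in B(x,r)$. Using condition (ii), choose $N$ so large that $\sup\{w_{y,\theta}(\gamma) : \gamma \in F^N\mathcal{B}_\Lambda\} \leq r$ and, when $x \neq x'$, also $\leq d_{y,\theta}(x,x')$; let $\mu$ be the length-$N$ initial segment of $x'$. For any $z \in Z(\mu)$ we have $d_{y,\theta}(x',z) \leq w_{y,\theta}(\mu)$ because $x'\wedge z$ extends $\mu$, and the strong triangle inequality gives $d_{y,\theta}(x,z) \leq \max\{d_{y,\theta}(x,x'), d_{y,\theta}(x',z)\} < r$. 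Thus $x' \in Z(\mu) \subseteq B(x,r)$, so $B(x,r)$ is cylinder-open. Combining the two inclusions shows the topologies coincide.

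The main obstacle, and the point where the hypotheses (w-I)/(w-II) do the essential work through Propositions \ref{prop:weight} and \ref{prop:weight-one-vertex}, is ensuring that the weights genuinely decay to $0$ uniformly (condition (ii)); without this the cylinder diameters need not shrink and the second inclusion fails. The self-similarity recorded in Lemma \ref{lem-weights-add-up}, namely $w_{y,\theta}(\lambda)^\theta = \sum_{\lambda\eta} w_{y,\theta}(\lambda\eta)^\theta$, reinforces that along each infinite path the values $w_{y,\theta}(\lambda)$ are non-increasing and positively controlled, which is exactly what makes the cylinder sets a neighborhood basis for $d_{y,\theta}$; I would invoke it should a more quantitative estimate on the decay be needed. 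The remaining bookkeeping (the empty-infimum convention when ranges differ, and the degenerate case where $\lambda$ is a vertex) is routine.
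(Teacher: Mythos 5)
Your proof is correct, but it takes a genuinely different route from the paper's. The paper proves the stronger statement that for any $x \in Z(\gamma)$ the \emph{closed} ball $\overline{B(x;r)}$ with $r = w_{y,\theta}(\gamma)$ equals $Z(\gamma)$ exactly, and the crux of that argument is the exponential self-similarity identity of Lemma \ref{lem-weights-add-up}: if a proper sub-path $\lambda$ of $\gamma$ had $w_{y,\theta}(\lambda) = w_{y,\theta}(\gamma)$, the identity $w_{y,\theta}(\lambda)^\theta = \sum_{\lambda\eta \in F^{|\gamma|}\mathcal B_\Lambda} w_{y,\theta}(\lambda\eta)^\theta$ would force all other extensions of $\lambda$ to have weight zero, a contradiction with strict positivity. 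You instead run a two-way refinement argument using only the weight axioms of Definition \ref{def:weight} together with strict positivity and the strong triangle inequality, and you never need Lemma \ref{lem-weights-add-up} (your closing remark correctly treats it as optional reinforcement rather than a load-bearing step). This makes your proof more elementary and strictly more general: it shows that \emph{any} strictly positive weight induces the cylinder set topology. This does not contradict the paper's earlier caveat that the topological agreement ``need not hold in general,'' since Definition \ref{def:weight} allows weights with value $0$ (codomain $\R_+$), in which case your ball-inclusion argument for cylinder sets breaks down because $\delta = w(\lambda^-)$ may vanish; here positivity is supplied by Propositions \ref{prop:weight} and \ref{prop:weight-one-vertex}, whose codomain is $\R_{>0}$, and you should make explicit that this is where the hypotheses (w-I)/(w-II) enter your argument. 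What the paper's heavier approach buys is the exact-diameter statement $\diam Z(\lambda) = w_{y,\theta}(\lambda)$ (Corollary \ref{cor-diam-eq-weight}), which is reused in the Hausdorff dimension computation of Theorem \ref{thm:Hausdorff-dim-self-sim}; your proof of the proposition itself is complete but would not directly yield that corollary. One trivial repair: in your converse inclusion, when $x' = x$ you chose $N$ with $\sup\{w_{y,\theta}(\gamma) \mid \gamma \in F^N \mathcal B_\Lambda\} \leq r$, which only gives $d_{y,\theta}(x,z) \leq r$ rather than $d_{y,\theta}(x,z) < r$; since the suprema tend to $0$ by condition (ii) and $r > 0$, simply choose $N$ with the supremum strictly less than $r$. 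Your reconciliation of the infimum formula \eqref{eq:distance-from-weight} with the piecewise formula of Proposition \ref{prop:weight-gives-ultrametric} (infimum attained at $x \wedge z$ by monotonicity, value $0$ at $x = z$ by condition (ii), and the convention assigning $1$ to the empty infimum when ranges differ) is a point the paper leaves implicit, and is handled correctly.
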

\begin{proof}
For the first statement, combine Proposition \ref{prop:weight-gives-ultrametric} and Proposition \ref{prop:weight} or Proposition \ref{prop:weight-one-vertex}.

For the second statement,
 we will prove that if $w_{y,\theta}(\gamma) = r$ then for any $x \in Z(\gamma)$, we have $\overline{B(x; r)} = Z(\gamma)$.
Observe first that
 \begin{align*}
\overline{B(x; r)} &= \{ z \in \Lambda^\infty \mid d_{y,\theta}(x,z) \leq r \} \\
& = \left\{ z \in \Lambda^\infty\mid  \ \exists \ \lambda \in F\mathcal B_\Lambda \text{ s.t. } x, z \in Z(\lambda) \text{ and } w_{y,\theta}(\lambda) \leq r \right\} \\
& \supseteq Z(\gamma).
\end{align*}
To see that $\overline{B(x; r)} = Z(\gamma)$, choose $z \in \overline{B(x; r)}$; we will show that $z \in Z(\gamma)$.

Write $\lambda$ for the longest path in $F\mathcal B_\Lambda$ such that $x, z \in Z(\lambda)$; then $d_{y,\theta}(x,z) = w_{y,\theta}(\lambda)$.  By hypothesis,
\[w_{y,\theta}(\lambda) \leq r = w_{y,\theta}(\gamma).\]
  Moreover, since $x \in Z(\lambda) \cap Z(\gamma)$ we must have that one of $\lambda, \gamma$ is a sub-path of the other.  If $\gamma$ is a sub-path of $\lambda$ then the fact that $z \in Z(\lambda)$ implies that $z \in Z(\gamma)$ and the proof is finished.

On the other hand, if $\lambda$ is a sub-path of $\gamma$,
condition (iii) of Definition \ref{def:weight}  forces  $w_{y,\theta}(\lambda) \geq w_{y,\theta}(\gamma)$.  Thus,
\begin{equation}
\label{eq:weights-equal}
w_{y,\theta}(\lambda) \leq w_{y,\theta}(\gamma) \leq w_{y,\theta}(\lambda) \quad \Rightarrow \quad w_{y,\theta}(\lambda) = w_{y,\theta}(\gamma) = r.\end{equation}
By Lemma \ref{lem-weights-add-up}, since $\lambda$ is a sub-path of $\gamma$, we know that
\[ w_{y,\theta}(\lambda)^\theta = \sum_{\lambda \eta \in F^{|\gamma|} \mathcal B_\Lambda} w_{y,\theta}(\lambda \eta)^\theta = w_{y,\theta}(\gamma)^\theta + \sum_{\lambda \eta \not= \gamma} w_{y,\theta}(\lambda \eta)^\theta.\]
Equation \eqref{eq:weights-equal} now implies that $ \sum_{\lambda \eta \not= \gamma} w_{y,\theta}(\lambda \eta)^\theta=0$; since $w_{y,\theta}(\nu) > 0$ for any path $\nu \in F\mathcal B_\Lambda$, it follows that
\[ \{ \lambda \eta \in F^{|\gamma|} \mathcal B_\Lambda \mid  \lambda \eta \not= \gamma\} = \emptyset.\]
In other words, $Z(\lambda) = Z(\gamma)$, so $z \in Z(\gamma)$ as desired.
\end{proof}

\begin{cor}\label{cor-diam-eq-weight}
Suppose that $\Lambda$ is a finite, strongly connected $k$-graph.  Choose  $\theta \in \R_+$ and a $\R_+$-functor $y$ on $\Lambda$   such that condition (\text{w}-I) holds if $\vert \Lambda^0\vert=1$ and otherwise  condition (\text{w}-II) holds.    Then for any path $\lambda \in F\mathcal B_\Lambda$,
\[ \text{diam}\, Z(\lambda) = w_{y,\theta}(\lambda).\]
\end{cor}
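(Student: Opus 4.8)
The plan is to prove the two inequalities $\operatorname{diam} Z(\lambda) \le w_{y,\theta}(\lambda)$ and $\operatorname{diam} Z(\lambda) \ge w_{y,\theta}(\lambda)$ separately, using throughout the explicit formula $d_{y,\theta}(x,z) = w_{y,\theta}(x \wedge z)$ coming from Proposition \ref{prop:weight-gives-ultrametric} for distinct $x,z$ sharing a range.

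For the upper bound, first I would observe that any two infinite paths $x,z \in Z(\lambda)$ agree at least along $\lambda$, so their longest common initial segment $x \wedge z$ is an extension $\lambda\eta$ of $\lambda$, and $r(x)=r(z)=r(\lambda)$. Condition (iii) of Definition \ref{def:weight}, verified for $w_{y,\theta}$ in Proposition \ref{prop:weight} (respectively Proposition \ref{prop:weight-one-vertex}), gives $w_{y,\theta}(\lambda\eta) \le w_{y,\theta}(\lambda)$, so $d_{y,\theta}(x,z) = w_{y,\theta}(x\wedge z) \le w_{y,\theta}(\lambda)$ for every such pair, whence $\operatorname{diam} Z(\lambda) \le w_{y,\theta}(\lambda)$. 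Equivalently, this just records that, as shown in the proof of Proposition \ref{prop:ultrametric}, $Z(\lambda)$ is the closed ball of radius $w_{y,\theta}(\lambda)$ about any of its points, and a closed ball of radius $r$ in an ultrametric space has diameter at most $r$.

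For the lower bound I would exhibit a pair of points realizing the value $w_{y,\theta}(\lambda)$ by locating the first branching after $\lambda$. Because $w_{y,\theta}$ is a genuine weight, condition (ii) of Definition \ref{def:weight} forces $\sup\{w_{y,\theta}(\gamma): \gamma \in F^n\mathcal B_\Lambda\} \to 0$. If $Z(\lambda)$ consisted of a single infinite path, then at every stage the extension of $\lambda$ would be unique, so Lemma \ref{lem-weights-add-up} (whose defining sum would then have a single summand) would force the length-$n$ initial segment to have weight $w_{y,\theta}(\lambda) > 0$ for all $n \ge |\lambda|$, contradicting condition (ii). Hence there is a shortest extension $\lambda\eta_0 \in F\mathcal B_\Lambda$ possessing at least two distinct one-edge extensions. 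Applying Lemma \ref{lem-weights-add-up} with $m = |\eta_0|$, and noting that the uniqueness of the extensions strictly between $\lambda$ and $\lambda\eta_0$ collapses the sum to the single term $w_{y,\theta}(\lambda\eta_0)^\theta$, yields $w_{y,\theta}(\lambda) = w_{y,\theta}(\lambda\eta_0)$.

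Finally I would choose distinct edges $e \neq e'$ extending $\lambda\eta_0$ and, using that $\Lambda$ is strongly connected and hence source-free (so every finite path in $\mathcal B_\Lambda$ extends to an infinite one), pick $x \in Z(\lambda\eta_0 e)$ and $z \in Z(\lambda\eta_0 e')$. Then $x \wedge z = \lambda\eta_0$, so $d_{y,\theta}(x,z) = w_{y,\theta}(\lambda\eta_0) = w_{y,\theta}(\lambda)$, giving $\operatorname{diam} Z(\lambda) \ge w_{y,\theta}(\lambda)$ and completing the proof. I expect the only genuine subtlety to lie in the lower bound: namely, arguing that branching must eventually occur (so that $\eta_0$ is finite) and that the weight is unchanged up to the first branch point. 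Both rest on combining condition (ii) of the weight with the self-similarity identity of Lemma \ref{lem-weights-add-up}.
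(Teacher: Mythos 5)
Your proof is correct, and it is more than the paper gives: the paper's own proof is a two-line deduction from Proposition \ref{prop:ultrametric}, namely that $Z(\lambda)=\overline{B(x;r)}$ with $r=w_{y,\theta}(\lambda)$ for any $x\in Z(\lambda)$, ``therefore'' $\diam Z(\lambda)=r$. As you implicitly noticed, that last step is not automatic: in an ultrametric space a closed ball of radius $r$ only has diameter \emph{at most} $r$ (it can be strictly smaller, even a singleton), so the ball identity yields only the upper bound, which is also how you obtain it. Your lower bound is the genuinely new content: you rule out the degenerate case $|Z(\lambda)|=1$ by combining condition (ii) of Definition \ref{def:weight} with the self-similarity identity of Lemma \ref{lem-weights-add-up}, locate the first branch point $\lambda\eta_0$, collapse the sum in Lemma \ref{lem-weights-add-up} (valid since all extensions strictly before $\lambda\eta_0$ are unique, so the sum at $m=|\eta_0|$ has a single term) to get $w_{y,\theta}(\lambda\eta_0)=w_{y,\theta}(\lambda)$, and then exhibit $x,z$ diverging at $\lambda\eta_0$ with $d_{y,\theta}(x,z)=w_{y,\theta}(\lambda)$ exactly. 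This is in the same spirit as the collapse argument the paper deploys \emph{inside} the proof of Proposition \ref{prop:ultrametric} (where $w_{y,\theta}(\lambda)=w_{y,\theta}(\gamma)$ forces $Z(\lambda)=Z(\gamma)$), but the paper never runs it in the direction needed to show the diameter is attained. What your route buys is a self-contained and fully rigorous verification of the equality, which matters downstream: Theorem \ref{thm:Hausdorff-dim-self-sim} substitutes $\diam Z(\mu)=\epsilon(\mu)$ into the Hausdorff computation, so a strict inequality for some cylinder would corrupt that argument. One cosmetic remark: your appeal to source-freeness (via strong connectedness) to extend $\lambda\eta_0 e$ and $\lambda\eta_0 e'$ to infinite paths is exactly right and is the same fact the paper uses in the proof of Proposition \ref{prop:weight}.
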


\begin{proof}The proof of Proposition \ref{prop:ultrametric} establishes that for any $x \in Z(\lambda)$,
\[Z(\lambda) = \overline{B(x; r)}\]
 where $r = w_{y,\theta}(\lambda)$.  Therefore $\text{diam}\, Z(\lambda) = r = w_{y,\theta}(\lambda)$.
\end{proof}

\subsection{The Hausdorff structure of $(\Lambda^\infty, d_{y,\theta})$}
\label{sec:Hausdorff-subsection}
We conclude the paper by computing the Hausdorff measure and dimension of the ultrametric Cantor sets $(\Lambda^\infty, d_{y,\theta})$, using the detailed understanding of the weights $d_{y,\theta}$ obtained in Section \ref{sec:Bratteli-defns}, and showing that the Hausdorff measure is precisely the quasi-invariant measure $\mu_{y,\theta}$.   For this computation, the condition \eqref{eq:self similar weight} satisfied by $w_{y,\theta}$ is crucial; indeed, we are able to compute the Hausdorff measure and dimension of $(X_{\mathcal B}, d_\epsilon)$ whenever $\epsilon$ is a weight on the Bratteli diagram $\mathcal B$ which satisfies Equation \eqref{eq:self similar weight}.  To formalize this, we  introduce (see Definition \ref{def-Cantorian-Brat-diagr}) the notion of an {\em exponentially self-similar weight with exponent $\theta$} that is modeled on Equation \eqref{eq:self similar weight}. We mention that self-similar conditions on weighted Cantor sets or Bratteli diagrams have been introduced before, see for example \cite[Definition 2.6]{julien-savinien-selfsim}. The existing definitions do not apply to our case of interest, however (see Remark \ref{rmk:self-sim} below).
\begin{defn}\cite[Definition 16]{rogers}
\label{def:hausdorff-measure}
Let $(X, d)$ be a metric space and fix $s \in \R_{\geq 0}$.
The \emph{Hausdorff measure} of dimension $s$ of a  compact subset $Z$ of $X$ is
\[ H^s(Z) = \lim_{\delta \to 0}\,  \inf \left\{ \sum_{U_i \in F} (\text{diam } U_i)^s \mid |F| < \infty, \ \cup_i U_i = Z, \ \text{diam } U_i < \delta \text{ for all } i\right\}.\]
It is standard  to show that $H^s(Z)$ is a decreasing function of $s$, and that there is a unique $s \in \R$ such that $H^t(X) = \infty$ for all $t < s$ and that $H^t(X) = 0$ for all $t > s$.  This value of $s$ is called the \emph{Hausdorff dimension} of $X$.
\end{defn}

\begin{defn}
\label{def-Cantorian-Brat-diagr} Let $\mathcal{B}$ be a Bratteli diagram and let  $X_{\mathcal{B}}$ be its infinite path space equipped with the cylinder set topology.
\begin{enumerate}
\item[(i)] We say $\mathcal{B}$ is a {\em Cantorian Bratteli diagram} if $X_{\mathcal{B}}$ is a  Cantor set.
\item[(ii)] An {\em exponentially self-similar weight} with exponent $\theta\in[0,\infty)$  is a weight $\epsilon $ on a Cantorian Bratteli diagram $\mathcal{B}$  satisfying the equation
 	\begin{equation}\label{eq-self-sim-weights}
 	\epsilon (\lambda)^\theta  = \sum_{\lambda \eta \in F^{|\lambda| + m}\mathcal B} \epsilon (\lambda \eta)^\theta\end{equation}
for all finite paths $\lambda \in F\mathcal B$  and all $m\in \N$. 	\end{enumerate}
 	\end{defn}
 	
\begin{rmk}
\label{rmk:self-sim}
There does not appear to be any relation between our exponentially self-similar weights and the self-similar metrics on Bratteli diagrams discussed in \cite{julien-savinien-selfsim}.
 Lemma \ref{lem-weights-add-up} implies that for any  $\R_+$-functor $y$ and any $\theta \in \R_+$, the weights $w_{y,\theta}$ are exponentially self-similar weights with exponent $\theta$.  However, the associated metric $d_{y,\theta}$ does not satisfy the self-similarity condition in \cite[Definition 2.6]{julien-savinien-selfsim}, because $\inf \{ e^{-y(\lambda)}: \lambda \in F\mathcal B_\Lambda\} = 0$.  Moreover, the conditions placed on the constants $\{ a_\gamma: \gamma \in F\mathcal B\}$  in \cite[Definition 2.6]{julien-savinien-selfsim} are too weak to guarantee Equation \eqref{eq-self-sim-weights}.
\end{rmk}

 The proofs of the following Proposition and Corollary are identical to the proofs of Proposition \ref{prop:ultrametric} and Corollary \ref{cor-diam-eq-weight}, so we omit the details.
 \begin{prop}
\label{prop:ultrametric-Pearson-Belissard-self-sim}
  Suppose that $\mathcal{B}$ is a  Cantorian Bratteli diagram with weight  $\epsilon$. The  formula
  \begin{equation}
  \label{def-ultrametric-from-weight}
  d_\epsilon(x,y ) := \epsilon(x\wedge y)
  \end{equation}
 defines  an ultrametric on the infinite path space  $X_{\mathcal{B}}$ of $\mathcal{B}$.  Furthermore, if $\epsilon$ is exponentially self-similar with exponent $\theta$,
 then the ultrametric $d_\epsilon$ induces the cylinder set topology on $X_{\mathcal B}$.
\end{prop}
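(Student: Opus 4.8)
The plan is to follow the two-part structure of the proof of Proposition~\ref{prop:ultrametric} essentially verbatim, replacing the specific weight $w_{y,\theta}$ by the abstract weight $\epsilon$, and replacing the self-similarity furnished by Lemma~\ref{lem-weights-add-up} by the defining identity \eqref{eq-self-sim-weights} of an exponentially self-similar weight. For the first assertion, that $d_\epsilon$ is an ultrametric, I would simply invoke Proposition~\ref{prop:weight-gives-ultrametric}: since $\epsilon$ is a weight on $\mathcal{B}$, that proposition already guarantees that $d_\epsilon(x,y) = \epsilon(x \wedge y)$ satisfies the ultrametric inequality and separates points. No self-similarity is needed here.

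For the second assertion, about the topology, the key claim to establish is that for every $\gamma \in F\mathcal{B}$ and every $x \in Z(\gamma)$, the closed ball $\overline{B(x;r)}$ with $r = \epsilon(\gamma)$ equals the cylinder $Z(\gamma)$. The inclusion $Z(\gamma) \subseteq \overline{B(x;r)}$ is immediate from condition (iii) of Definition~\ref{def:weight}: any $z \in Z(\gamma)$ has $x \wedge z$ extending $\gamma$, so $\epsilon(x \wedge z) \le \epsilon(\gamma) = r$. For the reverse inclusion, I would take $z$ with $d_\epsilon(x,z) \le r$ and set $\lambda = x \wedge z$; then $\epsilon(\lambda) \le \epsilon(\gamma)$, and since $\lambda$ and $\gamma$ are both initial segments of $x$, one is a sub-path of the other. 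If $\gamma$ is a sub-path of $\lambda$ we are done; otherwise $\lambda$ is a proper sub-path of $\gamma$, and condition (iii) forces $\epsilon(\lambda) = \epsilon(\gamma)$. At this point I would apply \eqref{eq-self-sim-weights} with $m = |\gamma| - |\lambda|$ to write $\epsilon(\lambda)^\theta = \epsilon(\gamma)^\theta + \sum_{\lambda\eta \neq \gamma}\epsilon(\lambda\eta)^\theta$, so that the equality of weights forces the remaining sum of non-negative terms to vanish.

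The main point requiring care — and the only place the Cantorian hypothesis enters — is concluding from this vanishing sum that $Z(\lambda) = Z(\gamma)$, i.e.\ that $\gamma$ is the unique length-$|\gamma|$ extension of $\lambda$. In the proof of Proposition~\ref{prop:ultrametric} this used the strict positivity $w_{y,\theta}(\nu) > 0$; in the abstract setting I would instead recover positivity from the metric and Cantor structure. Indeed, since $X_{\mathcal{B}}$ is a Cantor set it is perfect, so every nonempty cylinder $Z(\nu)$ is infinite; were $\epsilon(\nu) = 0$, then any two distinct infinite paths through $\nu$ would have their meet extending $\nu$ and hence, by condition (iii), weight at most $\epsilon(\nu) = 0$, so they would lie at $d_\epsilon$-distance $0$, contradicting that $d_\epsilon$ is a metric. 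Hence $\epsilon(\lambda\eta) > 0$ for every $\lambda\eta \in F^{|\gamma|}\mathcal{B}$, and the vanishing of $\sum_{\lambda\eta \neq \gamma}\epsilon(\lambda\eta)^\theta$ forces the set $\{\lambda\eta \in F^{|\gamma|}\mathcal{B} : \lambda\eta \neq \gamma\}$ to be empty. Thus $Z(\lambda) = Z(\gamma)$ and $z \in Z(\gamma)$, establishing $\overline{B(x;r)} = Z(\gamma)$.

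To finish, I would observe that in an ultrametric space every closed ball is clopen, so the identity $\overline{B(x;r)} = Z(\gamma)$ exhibits each cylinder set as a $d_\epsilon$-clopen ball and, conversely, each such ball as a cylinder set; since the cylinder sets form a basis for the cylinder set topology, the two topologies coincide. I expect the positivity step described in the previous paragraph to be the only genuine obstacle, the remainder being a direct transcription of the proofs of Proposition~\ref{prop:ultrametric} and Corollary~\ref{cor-diam-eq-weight}.
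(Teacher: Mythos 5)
Your proposal is correct and takes essentially the route the paper intends: the paper omits this proof entirely, declaring it identical to the proofs of Proposition \ref{prop:ultrametric} and Corollary \ref{cor-diam-eq-weight} with $w_{y,\theta}$ replaced by $\epsilon$ and Lemma \ref{lem-weights-add-up} replaced by the defining identity \eqref{eq-self-sim-weights}, which is precisely your transcription. Your derivation of strict positivity of $\epsilon$ from perfectness of the Cantor set $X_{\mathcal B}$ (together with the first assertion, via Proposition \ref{prop:weight-gives-ultrametric}) is in fact a worthwhile supplement rather than padding, since Definition \ref{def:weight} only requires $\epsilon$ to take values in $[0,\infty)$ whereas the concrete weight $w_{y,\theta}$ is strictly positive by construction, so this is the one point where ``identical'' genuinely needs the patch you supply --- with the minor caveat that your positivity argument covers paths $\nu$ with $Z(\nu)\neq\emptyset$, so from the vanishing of $\sum_{\lambda\eta\neq\gamma}\epsilon(\lambda\eta)^\theta$ you should conclude that every $\lambda\eta\neq\gamma$ has $Z(\lambda\eta)=\emptyset$ (rather than that no such $\lambda\eta$ exists), which still yields $Z(\lambda)=Z(\gamma)$ and completes the proof unchanged.
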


 \begin{cor}\label{cor-diam-eq-weight-self-sim}
  Suppose that $\mathcal{B}$ is a  Cantorian Bratteli diagram with exponentially self-similar weight  $\epsilon$. Then in $(X_{\mathcal{B}}, d_{\epsilon})$ we have
 \[ \text{diam}\, Z(\lambda) = \epsilon(\lambda).\]
 \end{cor}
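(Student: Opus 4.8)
The plan is to follow the proof of Corollary \ref{cor-diam-eq-weight} verbatim, transporting it from the weights $w_{y,\theta}$ to an arbitrary exponentially self-similar weight $\epsilon$. Concretely, I would first appeal to Proposition \ref{prop:ultrametric-Pearson-Belissard-self-sim}, whose proof (being the exact analog of the proof of Proposition \ref{prop:ultrametric}) shows that for every $x \in Z(\lambda)$ and $r := \epsilon(\lambda)$ one has $\overline{B(x;r)} = Z(\lambda)$. Granting this, the corollary is nearly immediate, and the remaining work is to read off $\text{diam}\, Z(\lambda)$ from this closed-ball description.

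I would then establish the two inequalities separately. For the upper bound $\text{diam}\, Z(\lambda) \le \epsilon(\lambda)$, note that any two infinite paths $x, z \in Z(\lambda)$ share $\lambda$ as an initial segment, so their longest common initial segment $x \wedge z$ is an extension of $\lambda$; condition (iii) of Definition \ref{def:weight} then gives $d_\epsilon(x,z) = \epsilon(x \wedge z) \le \epsilon(\lambda)$, and taking the supremum over $x,z \in Z(\lambda)$ yields the bound. The lower bound amounts to producing a pair $x, z \in Z(\lambda)$ with $\epsilon(x \wedge z) = \epsilon(\lambda)$.

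The main obstacle is precisely this lower bound, i.e.\ showing the diameter is actually attained rather than merely bounded by $\epsilon(\lambda)$; this is where exponential self-similarity enters. Applying Equation \eqref{eq-self-sim-weights} with $m = 1$, if $\lambda$ has a unique one-edge extension $\lambda\eta$ then the single-term sum forces $\epsilon(\lambda) = \epsilon(\lambda\eta)$ and hence $Z(\lambda) = Z(\lambda\eta)$. Since $\mathcal{B}$ is Cantorian, $X_{\mathcal{B}}$ is perfect, so $Z(\lambda)$ contains more than one path; consequently the chain of forced single-edge extensions cannot continue indefinitely (otherwise $Z(\lambda)$ would be a singleton, hence isolated), and it must terminate at some $\mu \supseteq \lambda$ admitting at least two distinct extending edges, with $\epsilon(\mu) = \epsilon(\lambda)$ by the telescoping chain of equalities. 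Choosing infinite paths $x, z \in Z(\mu)$ that diverge at the edge immediately after $\mu$ gives $x \wedge z = \mu$, whence $d_\epsilon(x,z) = \epsilon(\mu) = \epsilon(\lambda)$ and $\text{diam}\, Z(\lambda) \ge \epsilon(\lambda)$. Combining the two inequalities yields $\text{diam}\, Z(\lambda) = \epsilon(\lambda)$, exactly as in Corollary \ref{cor-diam-eq-weight}.
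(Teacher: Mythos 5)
Your proof is correct, and it is in fact more complete than the paper's. The paper disposes of this corollary in one line, declaring its proof identical to that of Corollary \ref{cor-diam-eq-weight}, which in turn reads off $\diam Z(\lambda) = r$ directly from the identity $\overline{B(x;r)} = Z(\lambda)$ with $r = w_{y,\theta}(\lambda)$ established in the proof of Proposition \ref{prop:ultrametric}. As you implicitly recognize, that last inference is not automatic: in an ultrametric space a closed ball of radius $r$ may have diameter strictly smaller than $r$, so the ball identity by itself only yields the upper bound $\diam Z(\lambda) \leq \epsilon(\lambda)$ (which, as you note, already follows from condition (iii) of Definition \ref{def:weight} and the formula $d_\epsilon(x,z) = \epsilon(x \wedge z)$). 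Your lower-bound argument---telescoping the $m=1$ instance of Equation \eqref{eq-self-sim-weights} along forced unique one-edge extensions, invoking perfectness of the Cantor set $X_{\mathcal{B}}$ to guarantee that this chain terminates at a branch point $\mu$ extending $\lambda$ with $\epsilon(\mu) = \epsilon(\lambda)$, and then exhibiting $x, z$ with $x \wedge z = \mu$---is a genuine addition not written anywhere in the paper; it shows the diameter is actually attained, which is precisely what the proof of Theorem \ref{thm:Hausdorff-dim-self-sim} later relies on. Two small points to tidy up: the implication $\epsilon(\lambda)^\theta = \epsilon(\lambda\eta)^\theta \Rightarrow \epsilon(\lambda) = \epsilon(\lambda\eta)$ requires $\theta > 0$, but $\theta = 0$ is incompatible with the Cantorian hypothesis anyway, since for $\theta = 0$ Equation \eqref{eq-self-sim-weights} forces every finite path to have a unique extension of every length, so $X_{\mathcal{B}}$ would have no branching at all; and the nonemptiness of $Z(\mu e_1)$ and $Z(\mu e_2)$ tacitly uses that every finite path of $\mathcal{B}$ extends to an infinite one, which follows from strict positivity of $\epsilon$ (a path with no length-$m$ extension would make the right-hand side of \eqref{eq-self-sim-weights} an empty sum, forcing $\epsilon(\lambda) = 0$) together with finiteness of the levels of $\mathcal{B}$---the same standing assumptions the paper itself uses when asserting in Proposition \ref{prop:ultrametric-Pearson-Belissard-self-sim} that $d_\epsilon$ is a metric.
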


 We now prove that for Cantorian Bratteli diagrams with exponentially self-similar weights, the exponent and weight are intimately tied to the Hausdorff dimension and measure of the ultrametric space $(X_{\mathcal B}, d_\epsilon)$;   compare the results below with \cite[pages 203-206]{edgar-meas}.

 \begin{thm}
 	\label{thm:Hausdorff-dim-self-sim}
 Suppose that $\mathcal{B}$ is a weighted Cantorian Bratteli diagram for a self-similar weight  $\epsilon$ with exponent $\theta$.
 Then  the Hausdorff dimension of the ultrametric Cantor set $(X_{\mathcal{B}}, d_{\epsilon})$  is $\theta$.
 \end{thm}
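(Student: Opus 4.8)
The plan is to show directly that $0 < H^\theta(X_{\mathcal B}) < \infty$, which by Definition \ref{def:hausdorff-measure} forces the Hausdorff dimension to equal $\theta$. The computations will be driven by the measure $\nu$ on $X_{\mathcal B}$ determined on cylinders by $\nu(Z(\lambda)) = \epsilon(\lambda)^\theta$. The self-similarity equation \eqref{eq-self-sim-weights} with $m = 1$ says exactly that $\nu$ is finitely additive on cylinder sets, so (as in the proof of Proposition \ref{prop:unique-measure}, via Kolmogorov extension) $\nu$ extends to a Borel measure. Summing \eqref{eq-self-sim-weights} over all $v \in \mathcal V_0$ and using that the level-$n$ cylinders partition $X_{\mathcal B}$ gives, for every $n$,
\[ \sum_{\lambda \in F^n \mathcal B} \epsilon(\lambda)^\theta = \sum_{v \in \mathcal V_0} \epsilon(v)^\theta =: c,\]
and I would first observe that $c > 0$: were $\epsilon(v) = 0$ for every $v$, condition (iii) of Definition \ref{def:weight} would force $\epsilon \equiv 0$ and $d_\epsilon$ would be identically zero, contradicting that it is a metric on the nonempty Cantor set $X_{\mathcal B}$.

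For the bound $H^\theta(X_{\mathcal B}) \le c$, I would use the explicit cover of $X_{\mathcal B}$ by the level-$n$ cylinders $\{Z(\lambda) : \lambda \in F^n\mathcal B\}$. By Corollary \ref{cor-diam-eq-weight-self-sim}, $\diam Z(\lambda) = \epsilon(\lambda)$, and by condition (ii) of Definition \ref{def:weight} these diameters are all at most $\sup\{\epsilon(\gamma) : \gamma \in F^n\mathcal B\} \to 0$; thus for any $\delta$ this is an admissible cover once $n$ is large. Its weight is $\sum_{\lambda \in F^n\mathcal B}(\diam Z(\lambda))^\theta = \sum_{\lambda\in F^n\mathcal B}\epsilon(\lambda)^\theta = c$, independent of $n$, so the corresponding $\delta$-approximant satisfies $H^\theta_\delta(X_{\mathcal B}) \le c$ for all $\delta$, and hence $H^\theta(X_{\mathcal B}) \le c < \infty$. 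In particular the dimension is at most $\theta$.

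The lower bound $H^\theta(X_{\mathcal B}) \ge c$ is where the main work lies, and the key is an ultrametric covering reduction: every $U \subseteq X_{\mathcal B}$ with $0 < \diam U < \infty$ is contained in a cylinder $Z(\lambda_U)$ with $\diam Z(\lambda_U) = \diam U$. Indeed, fixing $x \in U$ and letting $\lambda_U = x(0,N)$ for the largest $N$ with $U \subseteq Z(x(0,N))$, one checks using $d_\epsilon(x,y) = \epsilon(x\wedge y)$ that some $y\in U$ has $x \wedge y = \lambda_U$, whence $\diam U = \epsilon(\lambda_U) = \diam Z(\lambda_U)$. Granting this, any finite cover $\{U_i\}$ of $X_{\mathcal B}$ can be replaced by the cylinder cover $\{Z(\lambda_{U_i})\}$ without changing any diameter, so
\[ \sum_i (\diam U_i)^\theta = \sum_i \nu(Z(\lambda_{U_i})) \ge \nu(X_{\mathcal B}) = c\]
by finite subadditivity of $\nu$, the last equality coming from the partition identity above. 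Taking the infimum over covers and then $\delta \to 0$ gives $H^\theta(X_{\mathcal B}) \ge c > 0$, so the dimension is at least $\theta$. Combining the two bounds yields $H^\theta(X_{\mathcal B}) = c \in (0,\infty)$ and hence Hausdorff dimension exactly $\theta$. I expect the only genuinely delicate point to be the covering reduction, i.e.\ verifying that in $(X_{\mathcal B}, d_\epsilon)$ arbitrary small sets may be inflated to cylinders of the same diameter; everything else is bookkeeping with the self-similarity identity.
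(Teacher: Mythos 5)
Your proof is correct and, at its core, travels the same road as the paper's: both arguments hinge on the covering reduction that in the ultrametric $d_\epsilon$ any set $U$ of small positive diameter lies inside a cylinder of the \emph{same} diameter (your minimal cylinder $Z(\lambda_U)$ over the longest common prefix is exactly the paper's closed ball $\overline{B(x;\diam U)}$, which the paper shows equals a cylinder $Z(x_1\cdots x_{\ell})$), combined with $\diam Z(\lambda)=\epsilon(\lambda)$ (Corollary \ref{cor-diam-eq-weight-self-sim}) and the level identity $\sum_{\lambda\in F^n\mathcal B}\epsilon(\lambda)^\theta=\sum_{v\in\mathcal V_0}\epsilon(v)^\theta=:c$ coming from \eqref{eq-self-sim-weights}. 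Where you diverge is in the accounting. The paper, after inflating a cover to cylinders, subdivides them to a common length $M$ via \eqref{eq-self-sim-weights}, notes that distinct length-$M$ cylinders are disjoint, and so identifies the reduced cover with the full level-$M$ partition, computing the infimum in Definition \ref{def:hausdorff-measure} exactly; you instead prove two separate inequalities, the upper bound from the explicit level-$n$ covers and the lower bound by extending $\epsilon^\theta$ to a Borel measure $\nu$ and invoking finite subadditivity. This is close to a repackaging --- unwinding subadditivity of $\nu$ on a finite cylinder cover is precisely the paper's subdivision-plus-disjointness step, so the Kolmogorov extension mostly outsources that combinatorics --- but your version buys two things the paper leaves implicit: an explicit verification that $c>0$ (the paper merely asserts the limiting value is ``strictly between 0 and infinity''), and a marginally more robust lower bound, since subadditivity never requires the reduced cover to be exactly a partition. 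Two edge cases are glossed at the same level of rigor in both arguments: cover elements of zero diameter (they contribute $0$ to the sum yet cannot simply be deleted; one patches this by enclosing each such singleton in a cylinder of $\nu$-measure at most $\varepsilon/n$ and letting $\varepsilon\to 0$), and, when $|\mathcal V_0|>1$, sets containing paths with distinct range vertices, which lie in no cylinder --- harmless, because in the limit $\delta\to 0$ one may assume every $U_i$ has diameter smaller than the distance (equal to $1$ under the convention of Proposition \ref{prop:weight-gives-ultrametric}) between paths with different roots, so that your maximal $N$ with $U\subseteq Z(x(0,N))$ indeed exists.
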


 \begin{proof}
 We first compute the Hausdorff measure of dimension $\theta$  of $(X_{\mathcal{B}}, d_{\epsilon})$.
  Consider  a finite  cover   $\{ U_i\}_{i=1,\ldots, n}$ of $X_{\mathcal B}$, and the  associated sum
  \[
   \sum_{i=1}^n (\text{diam } U_i)^\theta.
  \]
  Observe first that  we can assume each $U_i$ to be a cylinder set $Z(\mu_i)$.
   Indeed, given   $U_i$, we can  pick $x = x_1 x_2 \cdots  \in U_i$, and define $\overline{B_{x,i}}$ to be the closed
   ball of center $x$ and radius $\text{diam } U_i$, namely,  
   \[\overline{B_{x,i}}= \{ y \in X_{\mathcal B}  \mid d_\epsilon (x, y) \leq  \text{diam } U_i\}.\]
     We will show that $\overline{B_{x,i} }= Z(x_1 \cdots x_{\ell_i})$ for some $\ell_i \in \N$; that $\text{diam}\, \overline{B_{x, i} }\leq \text{diam}\, U_i$; and that $\overline{B_{x,i}} \supseteq U_i$.  Thus, in order to minimize the sum used to compute the Hausdorff measure, we may assume without loss of generality that each open set  $U_i $ is a cylinder set, by replacing  $U_i$ with $ \overline{B_{x,i}} = Z(x_1 \cdots x_{\ell_i})$.

  The fact that $\overline{B_{x, i}} \supseteq U_i$ is immediate from the observation that $ d_\epsilon (x, z) \leq \text{diam}\, U_i $ for all $z \in U_i$.  To estimate the diameter of $\overline{B_{x,i}}$, choose $y, z \in \overline{B_{x,i}}$ and observe that
  \[ d_\epsilon(y, z) \leq \max \{ d_\epsilon(x, y) , d_\epsilon(x, z) \} \leq \text{diam}\, U_i.\]
  Taking supremums reveals that $\text{diam}\, \overline{B_{x, i}} \leq \text{diam}\, U_i$.

  We now check that $\overline{B_{x,i} }= Z(x_1 \cdots x_{\ell_i})$. {By  the definition of the weight $\epsilon$, there is a smallest $ \ell_i \in \N$ such that
  \[\epsilon (x_1 \cdots x_{\ell_i}) \leq \text{diam}\, \overline{B_{x,i}}.\]
 If $y \in \overline{B_{x,i}}$, by Equation \eqref{def-ultrametric-from-weight} we have that
 \[ \text{diam}\,\overline{B_{x,i}} \geq d_\epsilon(x, y ) = w_\epsilon(x \wedge y ) = \epsilon(x_1 \cdots x_{m_i})\]
 for some $\N \ni m_i \geq \ell_i $ (thanks to Definition \ref{def:weight} and
 the minimality of $\ell_i$).  It follows that $y \in Z(x_1 \cdots x_{\ell_i})$.
 On the other hand, if $z\in Z(x_1 \cdots x_{\ell_i}),$ then
 \[ d_\epsilon(z, x) = \epsilon(z \wedge x) \leq \epsilon(x_1 \cdots x_{\ell_i}) \leq \text{diam}\, U_i,\]
 so $z \in \overline{B_{x, i}}$ by construction.  In other words, $\overline{B_{x, i}} = Z(x_1 \cdots x_{\ell_i})$ as claimed; set $\mu_i :=x_1 \cdots x_{\ell_i} ,$ $i=1,\dots, n$.}

 Thus, in estimating $H^\theta(X_{\mathcal B})$, we need to compute
 \begin{equation} \inf \left\{ \sum_{i=1}^n (\text{diam}\, Z(\mu_i))^\theta \mid \bigcup_i Z(\mu_i) = X_{\mathcal B}, \ \text{diam}\, Z(\mu_i) < \delta  \text{ for all } i\right\}
 \label{eq:Hausdorff-estimate}
 \end{equation}
 and take the limit as $\delta \to 0$ of these infima.

Given one cover $\mathcal U = \{ Z(\mu_i)\}_{i=1}^n$ of $X_{\mathcal B}$ in the set \eqref{eq:Hausdorff-estimate}, let $M$ be the maximum of the lengths of the paths $\mu_1, \ldots,\mu_n$.  Moreover,  by  equation \eqref{eq-self-sim-weights},  replacing each $Z(\mu_i)$ by the collection 
\[\{ Z(\mu_i \eta_{ij}) \mid  |\eta_{ij}| = M - |\mu_i|\}_j\]
 (which makes all of the cylinder sets in the open cover $\mathcal{U}$ of the same length) does not change the sum arising in our computation \eqref{eq:Hausdorff-estimate}.  (Note that $Z(\mu_i) = \bigsqcup_j Z(\mu_i \eta_{ij})$, so the new collection of sets does indeed cover $X_{\mathcal{B}}$ whenever  $\{ Z(\mu_i)\}_{i=1 }^n$ does.)

 Therefore we can assume that all of the cylinder sets $ Z(\mu_1),\ldots,Z(\mu_n )$ are associated to finite paths in $\mathcal B$ which all have the same length $M$, and that all of the cylinder sets are also pairwise disjoint, since
 \[|\mu_i| = |\mu_j| \Rightarrow Z(\mu_i) \cap Z(\mu_j) = \delta_{i,j} Z(\mu_j).\]
 Since $\{ Z(\mu_i)\}_i$  covers $X_{\mathcal B}$, the collection $\mathcal{U}= \{ Z(\mu_1),\ldots,Z(\mu_n )\}$
 must therefore contain precisely all of  the cylinder sets of length $M$ in $X_{\mathcal B}$.

 Now by applying  Corollary \ref{cor-diam-eq-weight-self-sim}, we see that
 \begin{align*}
 H^\theta(X_{\mathcal B})& = \lim_{\delta \to 0 } \inf \left\{ \sum_{Z(\mu_i) \in \mathcal U} (\text{diam}\, Z(\mu_i))^\theta \mid  \text{diam}\, Z(\mu_i) < \delta  \ \forall \ i\right\} \\
 &= \lim_{\delta \to 0} \inf \left\{ \sum_{|\mu| = M}(\epsilon (\mu))^\theta \mid \max \{ \epsilon(\mu)\mid  |\mu| = M\} < \delta \right\} \\
 &= \lim_{\delta \to 0}  \inf \left\{ \sum_{v\in \Lambda^0} \epsilon(v)^\theta \right\} ,\\
 \end{align*}
 which is strictly between 0 and infinity.
 Definition \ref{def:hausdorff-measure} now implies that $\theta$ is the Hausdorff dimension of $(X_{\mathcal B}, d_{\epsilon})$.  Moreover, the arguments above imply that the Hausdorff measure $H^\theta$ is given on cylinder sets by
 \[ H^\theta(Z(\lambda)) = \epsilon(\lambda)^\theta.\]
 \end{proof}

\begin{cor}
	\label{cor:Hausdorff-dim}
	Let $\Lambda$ be a finite, strongly connected $k$-graph,   $y$ a  $\R_+$-functor  on $\Lambda$ and   $\theta \in (0,\infty)$ such that condition (\text{w}-I) holds if $\vert \Lambda^0\vert=1$ and otherwise  condition (\text{w}-II) holds.    Then  the Hausdorff dimension of the ultrametric Cantor set $(\Lambda^\infty, d_{y, \theta})$  is $\theta$.  Moreover, the Hausdorff measure $H^\theta$ agrees with $\mu_{y,\theta}$.
\end{cor}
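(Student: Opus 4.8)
The plan is to obtain Corollary \ref{cor:Hausdorff-dim} as a direct application of Theorem \ref{thm:Hausdorff-dim-self-sim} to the stationary $k$-Bratteli diagram $\mathcal B_\Lambda$ equipped with the weight $w_{y,\theta}$, so the bulk of the work is assembling the hypotheses of that theorem. First I would record that, under the stated conditions, Proposition \ref{prop:weight} (when $|\Lambda^0|\geq 2$, using condition (w-II)) or Proposition \ref{prop:weight-one-vertex} (when $|\Lambda^0|=1$, using condition (w-I)) guarantees that $w_{y,\theta}$ is a weight on $\mathcal B_\Lambda$. Next, Lemma \ref{lem-weights-add-up} shows that $w_{y,\theta}$ satisfies Equation \eqref{eq:self similar weight}, i.e.\ it is an exponentially self-similar weight with exponent $\theta$ in the sense of Definition \ref{def-Cantorian-Brat-diagr}. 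By Proposition \ref{prop:ultrametric}, the associated ultrametric $d_{y,\theta}=d_{w_{y,\theta}}$ metrizes the cylinder set topology on $\Lambda^\infty\cong X_{\mathcal B_\Lambda}$.

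The remaining hypothesis to verify is that $\mathcal B_\Lambda$ is Cantorian, that is, that $X_{\mathcal B_\Lambda}\cong\Lambda^\infty$ is a Cantor set. Compactness holds because $\Lambda$ is finite, and total disconnectedness is immediate since the basic sets $Z(\lambda)$ are simultaneously open and closed in the cylinder set topology; what remains is to rule out isolated points. Here the standing hypotheses do the work: condition (w-I) forces at least two edges of each color at the unique vertex (each summand $e^{-\theta y(h)}\leq 1$, so $\rho(B_i(y,\theta))>1$ is impossible with a single edge), while in the multi-vertex case condition (w-II) together with the source-freeness coming from strong connectivity (cf.\ \cite[Lemma 2.1]{aHLRS}) provides enough branching that every cylinder set properly contains two disjoint cylinder sets. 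Hence $\Lambda^\infty$ is perfect, and therefore a Cantor set.

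With all hypotheses checked, Theorem \ref{thm:Hausdorff-dim-self-sim} applies verbatim and yields that the Hausdorff dimension of $(\Lambda^\infty,d_{y,\theta})$ is $\theta$, together with the cylinder-set formula $H^\theta(Z(\lambda))=w_{y,\theta}(\lambda)^\theta$ for the Hausdorff measure. To identify $H^\theta$ with $\mu_{y,\theta}$ I would then raise Equation \eqref{eq:weight-y} to the power $\theta$ and compare with Equation \eqref{eq:mu-y-beta}:
\[ H^\theta(Z(\lambda)) = w_{y,\theta}(\lambda)^\theta = e^{-\theta y(\lambda)}\,\rho(B(y,\theta))^{-d(\lambda)}\,\xi^{y,\theta}_{s(\lambda)} = \mu_{y,\theta}(Z(\lambda)). \]
Since $H^\theta$ is a Borel measure and the cylinder sets $Z(\lambda)$ form a generating $\pi$-system for the Borel $\sigma$-algebra on the compact space $\Lambda^\infty$, agreement on cylinder sets forces $H^\theta=\mu_{y,\theta}$ by the uniqueness part of the standard measure-extension argument (both being probability measures, as $H^\theta(\Lambda^\infty)=\sum_{v\in\Lambda^0}\xi^{y,\theta}_v=1$). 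I expect the only genuine subtlety to be the verification that $\mathcal B_\Lambda$ is Cantorian, specifically the perfectness of $\Lambda^\infty$; everything else is a bookkeeping assembly of Theorem \ref{thm:Hausdorff-dim-self-sim}, Lemma \ref{lem-weights-add-up}, and the defining formulas for $w_{y,\theta}$ and $\mu_{y,\theta}$.
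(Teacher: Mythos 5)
Your proposal is correct and follows essentially the same route as the paper's proof: invoke Lemma \ref{lem-weights-add-up} to see that $w_{y,\theta}$ is exponentially self-similar with exponent $\theta$, apply Theorem \ref{thm:Hausdorff-dim-self-sim}, and then match $H^\theta$ with $\mu_{y,\theta}$ on cylinder sets via Equations \eqref{eq:weight-y} and \eqref{eq:mu-y-beta}. Your extra verification that $X_{\mathcal B_\Lambda}$ is Cantorian --- noting that (w-I) forces at least two edges per color in the one-vertex case, and that (w-II) rules out the deterministic (weighted permutation) situation in which $\rho(B_i(y,\theta))$ could not exceed the largest entry, so $\Lambda^\infty$ is perfect --- is a sound addition that the paper leaves implicit, and does not change the approach.
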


\begin{proof} Since, by Lemma~\ref{lem-weights-add-up}, $w_{y,\theta}$ is a self-similar weight on $\Lambda^\infty\cong X_{\mathcal{B}_\Lambda}$ with exponent $\theta$, the assertion about Hausdorff dimension follows directly from Theorem~\ref{thm:Hausdorff-dim-self-sim}.
To see that $H^\theta = \mu_{y,\theta}$, 
observe that
\begin{equation}
\label{eq:hausdorff-equals-weight}
 H^\theta(Z(\lambda))= w_{y,\theta}(\lambda)^\theta = e^{-\theta y(\lambda)} \rho(B(y,\theta))^{-d(\lambda)} \xi^{y,\theta}_{s(\lambda)} = \mu_{y,\theta}(Z(\lambda)).\end{equation}
Since the cylinder sets generate the ultrametric topology on $\Lambda^\infty$ by Proposition \ref{prop:ultrametric}, $\mu_{y,\theta} = H^\theta$ as claimed.
\end{proof}

\bibliographystyle{amsalpha}
\bibliography{eagbib}

\end{document}